\numberwithin{equation}{section}
\theoremstyle{plain}
\newtheorem{theorem}{Theorem}[section]
\newtheorem{proposition}[theorem]{Proposition}
\newtheorem{lemma}[theorem]{Lemma}
\newtheorem{corollary}[theorem]{Corollary}
\theoremstyle{definition}
\newtheorem{definition}[theorem]{Definition}
\newtheorem{example}[theorem]{Example}
\theoremstyle{remark}
\newtheorem{rk}{Remark}[section]
\let\expandafter\oldproof\csname\string\proof\endcsname
\let\oldendproof\endproof
\renewenvironment{proof}[1][\proofname]{%
  \oldproof[\noindent\textbf{#1.} ]%
}{\oldendproof}
\newcommand{\1}{\mathds{1}}
\newcommand{\E}{\mathbb{E}}
\newcommand{\mP}{\mathbb{P}}
\newcommand{\be}{\begin{equation}}
\newcommand{\ee}{\end{equation}}
\newcommand{\by}{\begin{eqnarray*}}
\newcommand{\ey}{\end{eqnarray*}}
\renewcommand{\leq}{\leqslant}
\renewcommand{\geq}{\geqslant}
\newcommand{\dx}{\mathtt{d}}
\newcommand{\bx}{\mathtt{b}}
\definecolor{dark-red}{rgb}{0.4,0.15,0.15}
\definecolor{dark-blue}{rgb}{0.15,0.15,0.4}
\definecolor{medium-blue}{rgb}{0,0,0.5}
\newcommand{\norm}[1]{\left\lVert#1\right\rVert}
\renewcommand{\r}{\mathfrak{r}}
\renewcommand{\Mc}{\mathcal{MC}}
\newcommand{\Sf}{\mathcal{M}}
\newcommand{\M}{\mathcal{M}}
\newcommand{\N}{\mathbb{N}}
\renewcommand{\S}{\mathcal{S}}
\newcommand{\GMc}{\mathcal{GMC}}
\begin{document}
\title{Analysis of non-reversible Markov chains via similarity orbit}
\author{Michael C.H. Choi}
\address{Institute for Data and Decision Analytics, The Chinese University of Hong Kong, Shenzhen, Guangdong, 518172, P.R. China and Shenzhen Institute of Artificial Intelligence and Robotics for Society}
\email{michaelchoi@cuhk.edu.cn}
\author{Pierre Patie}\thanks{The authors would like to thank two anonymous referees for their careful reading and constructive comments that have improved the presentation of the manuscript. This work was partially supported by  NSF Grant DMS-1406599 and ARC IAPAS, a fund of the Communaut\'ee francaise de Belgique. The first author would also like to acknowledge the support of The Chinese University of Hong Kong, Shenzhen grant PF01001143 and the financial support from AIRS - Shenzhen Institute of Artificial Intelligence and Robotics for Society Project 2019-INT002. The second author is grateful  for the hospitality of  the  LMA at the UPPA, where part of this work was completed.}
\address{School of Operations Research and Information Engineering, Cornell University, Ithaca, NY 14853, and, Laboratoire de Math\'ematiques et leurs  applications, UMR CNRS 5142, Universit\'e de Pau et des Pays de l'Adour, Pau, France.}
\email{pp396@cornell.edu}
\date{\today}



\begin{abstract}
	In this paper, we develop an in-depth analysis of non-reversible Markov chains on denumerable state space from a similarity orbit perspective. In particular, we study the class of Markov chains whose transition kernel is in the similarity orbit of a normal transition kernel, such as the one of  birth-death chains or reversible Markov chains. We start by identifying a set of sufficient  conditions for a  Markov chain to belong to the similarity orbit of a birth-death one.  As by-products, we obtain a spectral representation in terms of non-self-adjoint resolutions of identity in the sense of Dunford \cite{Dunford54} and offer a detailed analysis on the convergence rate, separation cutoff and ${\rm{L}}^2$-cutoff of this class of non-reversible Markov chains. We also look into the problem of estimating the integral functionals from discrete observations for this class. In the last part of this paper, we investigate a particular similarity orbit of reversible Markov kernels, that we call the pure birth orbit, and analyze various possibly non-reversible variants of classical birth-death processes in this orbit.
	\newline
	\noindent\textbf{AMS 2010 subject classifications}: Primary 60J05, 60J10, 60J27; Secondary 60J20, 37A25, 37A30 \newline
	\noindent\textbf{Keywords and phrases}: non-reversible Markov chains; normal operators; spectral operators; intertwining; similarity orbit; spectral gap; cutoff; eigentime identity; orthogonal polynomials
\end{abstract}

\maketitle

\tableofcontents

\section{Introduction}\label{sec:intro}

The spectral theorem of normal operators is undoubtedly a powerful tool to deal with substantial and difficult issues arising in the analysis of Markov chains. The intrusion of spectral theory to the analysis of Markov chains dates back to the long line of work initiated by \cite{LR54} and \cite{KM59} who were among the first to offer a detailed spectral analysis in the direction of reversible birth-death processes. Beyond eigenvalues expansion, the spectral theorem also appears in the study of the rate of convergence to equilibrium, mixing time, eigentime identity, separation cutoff and ${\rm{L}}^2$-cutoff, see e.g.~\cite{AF14,DSC06,CuiMao10,Miclo15,CSC10,LPW09}, to name but a few. It is also central for their statistical estimations  as it is demonstrated by the recent work of  \cite{AC16} for the integral functionals of normal Markov chains.

However, the lack of spectral theorem for non-normal operators gives major difficulties for tackling these fundamental topics in the context of general Markov chains, since the transition kernel $P$ is  a non-normal linear operator in the weighted Hilbert space \[ \ell^2(\pi)=\bigg\{ f: \mathcal{X} \mapsto \mathbb{C}; \:||f||^2_{\pi}=\sum_{x\in \mathcal{X}} |f(x)|^2\pi(x)<\infty\bigg\},\]  with $\pi$ being a reference (invariant or excessive) measure  of $P$ and $\pi(x) > 0$ for all $x \in \mathcal{X}$. Not only the non-reversibility property, or generally the non-normality of $P$, is a generic property from a theoretical perspective, it is also natural and becomes increasingly popular recently in various applications. For instance, non-reversible Markov chains  appear in the study of queueing networks and fluid approximation in \cite{FMM08}, hyperplane rearrangement in \cite{Pike13} and the very recent introduction of non-reversible Metropolis-Hastings and its variants, see e.g.~\cite{RR15,Bie16}.

To overcome the challenge of analyzing non-self-adjoint operators,  a wide variety of intriguing ideas has been elaborated to deal with specific issues. This includes, for example, the dilation concept developed by \cite{Kendall59}, reversiblizations techniques as in \cite{Fill91,Paulin15} or recasting to a weighted-$L^{\infty}$ space \cite{KM03,KM05,KM12}.

In this paper, we propose an alternative remedy by resorting to the algebraic  concept of similarity orbit of normal Markov chains  as defined in Definition \ref{def:sim} below. This identifies a class of transition kernels of Markov chains,  denoted by $\mathcal{S}$, which is a subset of  $\M$, the set of Markov transition kernels  acting on a countable state space $\mathcal{X}$.  We  emphasize that our approach  offers an unifying framework to analyze  all substantial  and classical topics for Markov kernels in $\mathcal S$  that were enumerated above for normal Markov  chains.
 This extends the work by the authors in \cite{Choi-Patie} from  skip-free Markov chains to
  general ones.   It is also in line with the papers by \cite{Miclo2016, Patie-Savov} and \cite{PatieZhao17} for the study of spectral theory of non-reversible Markov processes and  by \cite{FD,ChafaiJoulin13} and \cite{CD16} for birth-death processes, which rely on the  notion of intertwining relationships.
 We proceed by recalling the definition of similarity  orbit as introduced in \cite{Choi-Patie}.

\begin{definition}[Similarity]\label{def:sim}
	We say that the transition kernel $P \in \Sf$ of a Markov chain $X $  is similar to the transition kernel $Q$ of a Markov chain on $\mathcal{X}$, and we write  $P \sim  Q$, if there exists a bounded linear operator $\Lambda:\ell^2(\pi_Q) \to \ell^2(\pi)$ ($\pi_Q$ being a reference measure for $Q$) with bounded inverse such that
	\begin{equation}
	P \Lambda = \Lambda Q.
	\end{equation}
	We also write $\widehat{\Lambda}$ to be the adjoint operator of $\Lambda$. When needed we may write  $P \stackrel{\Lambda}{\sim} Q$ to specify the intertwining or the link kernel $\Lambda$. Note that $\sim$ is an equivalence relationship on the set of transition kernels $\M$.
\end{definition}

\begin{rk}
	In the discrete-time setting, for $n \in \mathbb{N}$, if $P \stackrel{\Lambda}{\sim} Q$, then $P^n \stackrel{\Lambda}{\sim} Q^n$.
\end{rk}

\begin{rk}\label{rk:cont}
	Note that this definition carries over when we study similarity on the level of infinitesimal generator in the continuous-time setting. For example, we write $L \stackrel{\Lambda}{\sim} G$ if $L$ (resp.~$G$) is the infinitesimal generator associated with the continuous-time Markov semigroup $(P_t)_{t \geq 0}$ (resp.~$(Q_t)_{t \geq 0}$). It follows easily that if $L \stackrel{\Lambda}{\sim} G$ then $P_t \stackrel{\Lambda}{\sim} Q_t$ for $t \geq 0$.
\end{rk}

\begin{rk}
	We compare our definition of similarity with other notions of intertwining in the literature. In \cite{Miclo2016}, both the link kernel $\Lambda$ and $Q$ are assumed to be Markovian, while in \cite{ChafaiJoulin13,CD16}, both $P$ and $Q$ are assumed to be birth-death processes. In \cite{FD}, the authors construct the strong stationary duality theory for general Markov chains. However, in Definition \ref{def:sim} we do not require $\Lambda$ to be a Markov operator, and $P,Q$ are general Markov operators instead of birth-death processes.
\end{rk}

The $\S$ class is now defined as the similarity orbit in $\M$ consisting of all Markov transition kernels that are similar to a normal transition kernel on $\mathcal{X}$. Note that reversible Markov kernels are normal operators in $\ell^2(\pi)$. From now on, we write $\mathcal{N}$ to be the set of normal transition kernels $Q$ on $\mathcal{X}$, that is, $Q\widehat{Q} = \widehat{Q}Q$ in $\ell^2(\pi_Q)$ where  $\: \widehat{}~$ denotes throughout the corresponding object for  the time-reversal process.

\begin{definition}[The $\S$ class]\label{def:Sclass}
	Suppose that $Q \in \mathcal{N}$. The similarity orbit of $Q$ (in $\Sf$) is
	\[ \S(Q) = \{P \in \mathcal{M};\:  P \sim Q \}, \]
	and the $\S$ class is the union over all possible orbits
	\[ \S = \bigcup_{Q \in \mathcal{N}} \S(Q). \]
	We point out that according to 
	\cite{Wermer54}, the class $\mathcal{S}$ is also characterized as the class of Markov chain whose transition kernel is a spectral scalar-type operator in the sense of \cite[Section $3$]{Dunford54}, see also \cite[Page $1938$, Definition $1$]{DS71}. As we will see in Section \ref{sec:st}, this characterization will be crucial in proving many of our later results.
Note that we could also study a wider class of transition kernel $\mathcal{S}^{'}$ in which $Q$ is not necessarily a Markov operator. However, we intend to focus our investigation on the class $\mathcal{S}$ in this paper as it is the appropriate setting to extend various substantial results that have been developed  for reversible chains.	

\end{definition}

We now summarize the major contributions of this work in the analysis of general Markov chains which also serve as an outline of the paper. In Section \ref{sec:st}, we begin by showing how the concept of similarity orbit is natural for developing the spectral decomposition of non-reversible Markov operators in the class $\S$. Indeed, each of its element admits a spectral representation with respect to non-self-adjoint resolution of identity as introduced by \cite{Dunford54}, see also \cite{DS71}. We also remark on the growing interest for non-self-adjoint operators with real spectrum that arise in the study of pseudo-hermitian quantum mechanics, see e.g. \cite{IT14} and the references therein. As by-product, one can develop a functional calculus for this class as for normal operators. Moreover, we obtain, under mild conditions, an eigenvalues expansion expressed in terms of Riesz basis, a notion that generalizes orthogonal basis and was introduced in non-harmonic analysis, see \cite{Y01}. Another intriguing aspect of the similarity orbit analysis is that in the continuous-time setting with $L \in \S(G)$ (see Remark \ref{rk:cont} above), where $G$ is the generator of a normal Markov chain, then both the heat kernel $(e^{tL})_{t \geq 0}$ and $(e^{tG})_{t \geq 0}$ share the same \textit{eigentime} identity, offering new examples and insights to the sequence of work by \cite{AF14,CuiMao10} and \cite{Miclo15}. Added to the above, we obtain a two-phase refinement for the convergence rate of the Markov kernels in the class $\S$ measured in the Hilbert space topology or in total variation distance: recall that in the normal case the rate of convergence in the Hilbert space topology is given by \textit{exactly} the second largest eigenvalue in modulus; for class $\S$ however, in small time we adapt the singular value upper bound of \cite{Fill91}, while for large time, the decay rate is the second largest eigenvalue in modulus modulo a constant which is the condition number of the link kernel $\Lambda$. This offers an original spectral explanation of the hypocoercivity phenomenon that has been observed and studied intensively in the PDE literature, see for instance \cite{V09}.
All these first consequences of the spectral representation are stated and proved in Section \ref{sec:st}. In view of the tractability and the fascinating properties that the class $\mathcal{S}$ possesses, it will be very interesting to characterize this class in terms of the one-step transition probabilities of $P \in \mathcal{S}$. Although fundamental,  this issue seems to be  very challenging. However, we manage to identify a set of sufficient conditions that defines what we call the generalized monotonicity condition class $\GMc$, such that the time-reversal $\widehat{P}$ intertwines with a birth-death chain in Section \ref{sec:sepcut}. This $\GMc$ class rests on the assumption of stochastic monotonicity in which $\Lambda$ is the so-called Siegmund kernel. This readily generalizes the $\Mc$ class introduced by \cite{Choi-Patie} in the context of skip-free chains. Note that the notion of stochastic monotonicity is studied by \cite{Sieg76} and \cite{CS85} and intertwining between stochastic monotone birth-death chains, which are reversible chains, has been previously investigated in detail by \cite{FD,HM11} and \cite{JK14}. Relying on the spectral decomposition as well as the fastest strong stationary time result of general chains obtained by \cite{Fill}, we study the separation cutoff phenomenon and demonstrate that the famous ``spectral gap times mixing time" conjecture as well as the proof in \cite{DSC06} carries over to the subclass $\GMc^+ \subset \GMc$ in Section \ref{sec:sepcut}. Next, building upon the concept of the non-self-adjoint spectral measure and the Laplace transform cutoff criteria proposed in \cite{CSC10} and further elaborated in \cite{CHS17}, we illustrate that the usual ${\rm{L}}^2$-cutoff criteria for reversible chains generalizes to the class $\S$ in Section \ref{sec:l2cut}.

Second, in Section \ref{sec:nonasympest}, we would like to estimate integral functionals of the type
$$\Gamma_T(f) = \int_0^T f(X_t)\, dt, \quad T \geq 0,$$
where $T$ is a fixed time and $f$ is a function such that the integral $\Gamma_T(f)$ is well-defined, by the Riemann-sum estimator given by, for $n \in \mathbb{N}$,
$$\hat{\Gamma}_{T,n}(f) = \sum_{k=1}^n f(X_{(k-1)\Delta_n}) \Delta_n,$$
where we observe $(X_t)_{t \in [0,T]}$ at discrete epochs $t = (k-1)\Delta_n$ with $k \in \llbracket n \rrbracket := \{1,\ldots,n\}$ and $\Delta_n = T/n$. This work is motivated by the recent work of \cite{AC16}, in which they studied the same problem with the outstanding assumption that the infinitesimal generator of the Markov process $(X_t)_{t \geq 0}$ is a normal operator to yield interesting results on the estimator error bound by spectral theory. We demonstrate that a number of their results can be readily generalized to the class $\S$ on the infinitesimal generator level.

Finally, in Section \ref{sec:simorbit}, we examine a particular similarity orbit of reversible Markov chains that we call the pure birth orbit. More precisely, suppose that we start with a reversible generator $G$ such that $G \stackrel{\Lambda}{\sim} L$ , where $L$ is the generator of a contraction yet possibly non-Markovian semigroup $(e^{tL})_{t \geq 0}$, we would like to investigate various properties of $L$ with $\Lambda$ being a pure birth kernel. This idea is powerful enough to allow us to generate completely new Markov or contraction kernel from known ones in which we have precise control and exact expressions on the stationary distribution, eigenfunctions and the speed of convergence. In particular, we perform an in-depth study on the pure birth variants of a constant rate birth-death model.

\section{Spectral theory of the class $\S$ and its convergence rate to equilibrium}  \label{sec:st}
In this Section, we develop an original methodology to obtain the spectral decomposition in the Hilbert space of the transition operator of Markov chains that  belong to the class $\S$, a subclass of $\M$ which is defined in Definition \ref{def:Sclass}. We write $\norm{\cdot}_{op}$ to be the operator norm, i.e.~$\norm{P}_{op}=\sup_{||f||_{\pi}=1}||Pf||_{\pi}$, and $\llbracket a,b \rrbracket := \{a,a+1,\ldots,b-1,b\}$ for any $a \leq b \in \mathbb{Z}$. We proceed by recalling that $P$ has a time-reversal $\widehat{P}$, that is, for $x,y \in \mathcal{X}$,
$$\pi(x) \widehat{P}(x,y) = \pi(y) P(y,x),$$
where $\pi$ is a reference measure for $P$. We equip the Hilbert space $\ell^2(\pi)$ with the usual inner product $\langle\cdot,\cdot\rangle_{\pi}$ defined by
$$\langle f,g \rangle_{\pi} = \sum_{x \in \mathcal{X}} f(x) \overline{g}(x) \pi(x), \quad f,g \in \ell^{2}(\pi),$$
where $\overline{g}$ is the complex conjugate of $g$. A spectral measure (or resolution of identity) in the sense of \cite[Section $3$]{Dunford54} and \cite[Page $1929$ Definition $1$]{DS71} of a Hilbert space $\mathcal{H}$ on $\mathbb{C}$ is a family of bounded operators $\mathcal{E} = \{E_B; B \in \mathcal{B}(\mathbb{C})\}$, where $\mathcal{B}(\mathbb{C})$ is the Borel algebra on $\mathbb{C}$, satisfying the following:

\begin{enumerate}
	\item $E_{\emptyset} = 0, E_{\mathbb{C}} = I$.
	\item For all $A,B \in \mathcal{B}(\mathbb{C})$,
	$$E_{A \cap B} = E_A E_B,$$
	while for disjoint $A,B$,
	$$E_{A \cup B} = E_A + E_B.$$
	\item There exists a constant $C>0$ such that $\norm{E_B}_{op} \leq C$ for all $B \in \mathcal{B}(\mathbb{C})$.
\end{enumerate}
For normal operator $Q \in \mathcal{N}$, its resolution of identity $\mathcal{E}$ is self-adjoint and hence $\mathcal{E}$ is a self-adjoint orthogonal projection. We also denote $E_B^*$ to be the adjoint of $E_B$. Recall that by the spectral theorem for normal operators the spectral resolution of $Q$ is
$$Q = \int_{\sigma(Q)} \lambda \,d E_{\lambda},$$
where $\sigma(Q)$ is the spectrum of $Q$. More generally, for $M \in \mathcal{M}$, we write $\sigma(M)$ (resp.~$\sigma_c(M)$, $\sigma_p(M)$, $\sigma_r(M)$) to be the spectrum (resp.~continuous spectrum, point spectrum, residual spectrum) of $M$.
We proceed to recall the notion of Riesz basis, which will be useful when we derive the spectral decomposition for compact $P \in \S$ in our main result Theorem \ref{thm:mcins} below. A basis $(f_k)$ of a Hilbert space $\mathcal{H}$ is a Riesz basis if it is obtained from an orthonormal basis $(e_k)$ under a bounded invertible operator $T$, that is, $T e_k = f_k$ for all $k$. It can be shown, see e.g.~\cite[Theorem $9$]{Y01}, that the sequence $(f_k)$ forms a Riesz basis if and only if $(f_k)$ is complete in $\mathcal{H}$ and there exist positive constants $A,B$ such that for arbitrary $n \in \N$ and scalars $c_1,\ldots,c_n$, we have
\begin{equation}\label{eq:Rieszb}
A \sum_{k=1}^n |c_k|^2 \leq \norm{\sum_{k=1}^n c_k f_k}^2 \leq B \sum_{k=1}^n |c_k|^2.
\end{equation}
If $(g_k)$ is a biorthogonal sequence to $(f_k)$, that is, $\langle f_k, g_m \rangle_{\pi} = \delta_{k,m}$, $k,m \in \N$ and $\delta_{k,m}$ is the Kronecker symbol, then $(g_k)$ also forms a Riesz basis. We are now ready to state the main result of this paper in the following, and the proof can be found in Section \ref{subsec:proof1}.

\begin{theorem}\label{thm:mcins}
	Assume that $P \in \S$ with   $P \stackrel{\Lambda}{\sim} Q ~\in \mathcal{N}$. Then the following holds.
	\begin{enumerate}[(a)]
		\item\label{it:resolution} Denote the self-adjoint spectral measure of $Q$ by $\mathcal{E} = \{E_B;~B \in \mathcal{B}(\mathbb{C})\}$, then $\{F_{B} := \Lambda E_{B} \Lambda^{-1};~B \in \mathcal{B}(\mathbb{C})\}$ defines a spectral measure and $P$ is a spectral scalar-type operator 
		with spectral resolution given by
		\begin{align*}
			P &= \int_{\sigma(P)} \lambda \,d F_{\lambda}, \\
			\widehat{P} &= \int_{\sigma(\widehat{P})} \lambda \,d F_{\lambda}^*.
		\end{align*}
		Note that
		$$\sigma(P) = \sigma(Q), \sigma(P) = \overline{\sigma(\widehat{P})}, \sigma_c(P) = \sigma_c(Q), \sigma_p(P) = \sigma_p(Q), \sigma_r(P) = \sigma_r(Q),$$
		and the multiplicity of each eigenvalue in $\sigma_p(P)$ is the same as that of $\sigma_p(Q)$. For analytic and single valued function $f$ on $\sigma(P)$, we have
		\begin{align*}
			f(P) &= \int_{\sigma(P)} f(\lambda) \,d F_{\lambda}.
		\end{align*}
		In particular, if $P$ is compact on $\mathcal{X}$ with distinct eigenvalues then for any $f \in \ell^{2}(\pi)$ and $n \in \mathbb{N}$, \[ P^n f = \sum_{k \in \mathcal{X}} \lambda_k^n \langle f, f_k^* \rangle_{\pi} f_k,\]
		where the set $(f_k)$ are eigenfunctions of $P$ associated to the eigenvalues $(\lambda_k)$ and form a Riesz basis of $\ell^2(\pi)$, and the set $(f_k^*)$ is the unique Riesz basis biorthogonal to $(f_k)$. For any $x,y \in \mathcal{X}$ and $n \in \mathbb{N}$, the spectral expansion of $P$ is given by
		$$ P^n(x,y) = \sum_{k \in \mathcal{X}} \lambda_k^n f_k(x) f_k^*(y)\pi(y).$$
		\item \label{it:sbd} $P \stackrel{\Lambda}{\sim} Q$ if and only if $\widehat{Q} \stackrel{\widehat{\Lambda}}{\sim} \widehat{P}$.
		\item\label{it:unitary} Suppose that $\Lambda$ is an unitary operator, that is, $\Lambda^{-1} = \widehat{\Lambda}$. Then $P$ is a normal (resp.~self-adjoint) operator in $\ell^2(\pi)$ if and only if $Q$ is a normal (resp.~self-adjoint) operator in $\ell^2(\pi_Q)$.
		\item(Lattice isomorphism)\label{it:lattice} Suppose that $\mathcal{X}$ is a finite state space. $\Lambda$ is an invertible Markov kernel on $\mathcal{X}$ with $\Lambda^{-1}$ having non-negative entries if and only if $\Lambda \in \mathcal{P}$, the set of  permutation kernels. We recall that  $\Lambda \in \mathcal{P}$ if $\Lambda = \Lambda_{\sigma} := (\1_{y = \sigma(x)})_{x,y \in \mathcal{X}}$ with $\sigma: \mathcal{X} \mapsto \mathcal{X}$ being a permutation of the state space, and note that $\Lambda_{\sigma}$ is an unitary Markov kernel. Moreover, for any $Q \in \mathcal{M}$, the permutation orbit $\mathcal{S}_{\mathcal{P}}(Q)$ of $Q$ is given by $\mathcal{S}_{\mathcal{P}}(Q)=\{ P \in \overline{\mathcal{M}}; P \Lambda=\Lambda Q, \Lambda \in \mathcal{P}\} \subset \mathcal M$, where $\overline{\mathcal{M}}$ is the set of square matrices on $\mathcal{X}$.
		\item \label{it:eig} Suppose that $\mathcal{X}$ is a finite state space and $Q$ is the transition kernel of an irreducible birth-death process, then $P \stackrel{\Lambda}{\sim} Q$ if and only if $P$ has real and distinct eigenvalues.
	\end{enumerate}
\end{theorem}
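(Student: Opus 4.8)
The plan is to prove the two implications separately, exploiting parts \eqref{it:resolution} and \eqref{it:lattice} already established, together with classical facts about finite irreducible birth-death chains. First I would treat the easy direction: if $P \stackrel{\Lambda}{\sim} Q$ with $Q$ the kernel of a finite irreducible birth-death chain, then by the spectral correspondence in \eqref{it:resolution} we have $\sigma(P)=\sigma(Q)$, with matching point spectrum and multiplicities. Since a finite irreducible birth-death chain is reversible, hence self-adjoint in $\ell^2(\pi_Q)$, its eigenvalues are real; and it is a standard fact (via the Jacobi tridiagonal structure of $Q$ with strictly positive off-diagonal entries, or via the classical orthogonal-polynomial representation of Karlin--McGregor) that these eigenvalues are simple. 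Transferring through the similarity, $P$ therefore has real and distinct eigenvalues.

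For the converse, suppose $P$ has real and distinct eigenvalues $\lambda_1 > \cdots > \lambda_{|\mathcal{X}|}$, say $\mathcal{X}=\llbracket 1,N\rrbracket$. Distinctness forces $P$ to be diagonalizable over $\mathbb{R}$: write $P = \Lambda_0 D \Lambda_0^{-1}$ with $D=\mathrm{diag}(\lambda_1,\dots,\lambda_N)$ and $\Lambda_0$ a real invertible matrix whose columns are eigenvectors. The task is to replace $D$ by an honest birth-death kernel $Q$ and $\Lambda_0$ by a linear isomorphism $\ell^2(\pi_Q)\to\ell^2(\pi)$. The natural candidate is to use the inverse spectral theory for Jacobi matrices: given any set of $N$ distinct reals in $(-1,1]$ together with a set of positive weights summing to one, there exists a unique irreducible birth-death transition kernel $Q$ (reversible with respect to the corresponding stationary measure) having exactly those eigenvalues; this is the finite Hochstadt/Karlin--McGregor inverse problem for tridiagonal stochastic matrices. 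One must first check the eigenvalues of $P$ lie in $(-1,1]$: since $P$ is a stochastic matrix, $\sigma(P)\subset\{|z|\le 1\}$, and combined with reality this gives $\sigma(P)\subset[-1,1]$; the value $+1$ is a simple eigenvalue with eigenvector $\mathds{1}$. So pick $Q$ to be any birth-death kernel with spectrum $\sigma(P)$ (for instance, choosing the stationary weights freely). Then $P$ and $Q$ are two diagonalizable matrices with the same spectrum, hence similar over $\mathbb{R}$: there is an invertible real matrix $\Lambda$ with $P\Lambda=\Lambda Q$. Since $\mathcal{X}$ is finite, $\Lambda$ and $\Lambda^{-1}$ are automatically bounded operators $\ell^2(\pi_Q)\leftrightarrow\ell^2(\pi)$, so $P\stackrel{\Lambda}{\sim}Q$ in the sense of Definition \ref{def:sim}, and $Q\in\mathcal{N}$ as a reversible kernel. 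This establishes $P\in\S(Q)$.

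The main obstacle is the converse, and specifically making sure the birth-death kernel $Q$ with the prescribed spectrum genuinely exists as a \emph{bona fide} (sub)stochastic, irreducible, reversible tridiagonal kernel — i.e. invoking the correct form of the inverse spectral theorem for Jacobi matrices and verifying its hypotheses in the stochastic normalization (eigenvalues in $[-1,1]$, simplicity of $1$, positivity of off-diagonal rates). Once such a $Q$ is in hand, the passage from "same spectrum, both diagonalizable" to "similar" is routine linear algebra, and the boundedness of the intertwiner is immediate in finite dimensions. A secondary point worth stating carefully is that one should phrase the conclusion as $P\stackrel{\Lambda}{\sim}Q$ for \emph{some} birth-death $Q$ (the orbit membership), matching the "if and only if" in the statement; the proof does not pin down a canonical $Q$, only its spectrum, which is exactly what is forced.
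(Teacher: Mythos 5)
Your proposal is correct and follows essentially the same route as the paper: the forward direction transfers the real, simple spectrum of the tridiagonal birth-death kernel through the similarity, and the converse diagonalizes $P$ and invokes the inverse spectral (inverse eigenvalue) theorem for Jacobi/birth-death matrices to produce a kernel $Q$ with the prescribed spectrum, composing the two similarities in finite dimensions. The paper cites \cite[Section 5.8]{DM76} (see also the remark pointing to \cite[Theorem 4]{FM79}) for exactly the existence step you flag as the main obstacle.
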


\begin{rk}
	As suggested by item \ref{it:unitary}, we can generate new non-normal examples via non-unitary link from known normal Markov chains such as birth-death processes. In Section \ref{sec:simorbit}, we investigate a particular non-unitary orbit that we call the pure birth orbit.
\end{rk}

\begin{rk}
	The result in Theorem \ref{thm:mcins}\ref{it:lattice} has also been obtained by Miclo using a different proof, see Lemma $12$ in \cite{miclo2015markov}.
\end{rk}

\begin{rk}
	The key to Theorem \ref{thm:mcins}\ref{it:eig} lies on the simplicity of the spectrum of $Q$. In the context of non-negative Jacobi matrices, the inverse eigenvalue problem has been studied by \cite[Theorem $4$]{FM79}.
\end{rk}

\begin{rk}\label{rk:cont2}
	Theorem \ref{thm:mcins} can be generalized easily to the continuous-time setting, see also Remark \ref{rk:cont}. Indeed, suppose that $L \in \mathcal{S}(G)$, where $G$ is a normal generator with spectral measure $\mathcal{E} = \{E_B;~B \in \mathcal{B}(\mathbb{C})\}$, then for $t \geq 0$,
	$$P_t = \int_{\sigma(L)} e^{t\lambda} dF_{\lambda},$$
	where $\{F_{B} := \Lambda E_{B} \Lambda^{-1};~B \in \mathcal{B}(\mathbb{C})\}.$
\end{rk}	


As a first application of the spectral decomposition stated in Theorem \ref{thm:mcins}, we derive accurate information regarding the speed of convergence to stationarity for ergodic chains in $\S$ in both the Hilbert space topology and in total variation distance.   There has been a rich literature devoted to the study of convergence to equilibrium for non-reversible chains by means of reversibilizations, see e.g.~\cite{Fill91, AF14, LPW09,MT06} and the references therein. Our approach reveals a natural extension to the non-reversible case of the classical spectral gap that appears in the study of reversible chains. To state our result we now fix some notations. We denote the second largest eigenvalue in modulus (SLEM) or the spectral radius of $P$ in the Hilbert space $$\ell^2_0(\pi) = \{ f \in \ell^2(\pi); \:~\langle f,\mathbf{1} \rangle_{\pi} = 0 \},$$  by $$\lambda_{*} = \lambda_{*}(P) = \sup\{|\lambda_i|;~ \lambda_i \in \sigma(P),\lambda_i \neq 1\},$$ then the \textit{absolute spectral gap} is $\gamma_* = 1 - \lambda_*$. For any two probability measures $\mu, \nu$ on $\mathcal{X}$, the total variation distance between $\mu$ and $\nu$ is given by
$$|| \mu - \nu ||_{TV} = \dfrac{1}{2} \sum_{x \in \mathcal{X}} |\mu(x) - \nu(x)|.$$
For $n \in \mathbb{N}$, the total variation distance from stationarity of $X$ is
$$d(n) = \max_{x \in \mathcal{X}} || \delta_x P^n -  \pi ||_{TV}.$$
For $g \in \ell^2(\pi)$, the mean of $g$ with respect to $\pi$ can be written as $\E_{\pi}(g) = \langle g,\mathbf{1} \rangle_{\pi}$. Similarly, the variance of $g$ with respect to $\pi$ is $\mathrm{Var}_{\pi}(g) = \langle g,g \rangle_{\pi} - \E_{\pi}^2(g) $.
Finally, we recall that Fill in \cite[Theorem $2.1$]{Fill91} obtained in the finite state space case the following  bound valids for all  $n \in \mathbb{N}_0$
\begin{align}\label{eq:tvfill}
d(n) \leq \dfrac{\sigma^n_*(P)}{2} \sqrt{\dfrac{1-\pi_{min}}{\pi_{min}}},
\end{align}
where $\pi_{min} = \min\limits_{x \in \mathcal{X}} \pi(x)$ and $\sigma_*(P) = \sqrt{\lambda_*(P \widehat{P})}$ is the second largest singular value of $P$. We obtain the following refinement for Markov chains in the class $\S$. The proof is deferred to Section \ref{subsec:proof2}.

\begin{corollary}\label{cor:spectralexp}
	Let $P \in \S$ with invariant distribution $\pi$, that is, $\pi P = \pi$, and assume that $P$ is compact.
	\begin{enumerate}
		\item \label{it:varbd} For any $n \in \mathbb{N}_0$,  we have
		\begin{align} \label{eq:hyp}
		\lambda_*^n\leq \norm{P^n - \pi}_{\ell^2(\pi) \to \ell^2(\pi)} \leq  \min\left(\sigma^{n}_*(P),{\kappa(\Lambda)}  \lambda_*^n \right) = \sigma^{n}_*(P)\1_{\{n<n^*\}} +\kappa(\Lambda)  \lambda_*^n\1_{\{n\geq n^*\}},
		\end{align}
		where $n^* = \lceil \frac{\ln \kappa(\Lambda)}{\ln \sigma_*(P)-\ln\lambda_*} \rceil$ and $\kappa(\Lambda)= \norm{\Lambda}_{\ell^2(\pi_Q) \to \ell^2(\pi)} \: \norm{\Lambda^{-1}}_{\ell^2(\pi) \to \ell^2(\pi_Q)} \geq 1$ is the condition number of $\Lambda$.
		When $\mathcal{X}$ is a finite state space, a sufficient condition for which $\lambda_* < \sigma_*(P)$ is given by $\max_{i \in \mathcal{X}} P(i,i) > \lambda_*$. In such case, for $n$ large enough, the convergence rate $\lambda_*$ given \eqref{eq:hyp} is strictly better than the reversibilization rate $\sigma_*(P)$.
		\item \label{it:totalvar} Suppose that $\mathcal{X}$ is a finite state space. For any $n \in \mathbb{N}_0$,
		\begin{align*}
		d(n) \leq \dfrac{\min\left(\sigma^{n}_*(P),{\kappa(\Lambda)}  \lambda_*^n \right)}{2} \sqrt{\dfrac{1-\pi_{min}}{\pi_{min}}},
		\end{align*}
		where $\lambda_{*}\leq \sigma_*(P)$.
	\end{enumerate}
\end{corollary}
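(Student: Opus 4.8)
The plan is to derive both items from the spectral decomposition in Theorem \ref{thm:mcins} together with Fill's bound \eqref{eq:tvfill}. First I would establish the key inequality $\norm{P^n - \pi}_{\ell^2(\pi)\to\ell^2(\pi)} \leq \kappa(\Lambda)\lambda_*^n$. Since $P \stackrel{\Lambda}{\sim} Q$ with $Q$ normal, and since $\pi$ is invariant for $P$, the operator $P^n - \pi$ (viewing $\pi$ as the rank-one projection $f \mapsto \langle f, \mathbf{1}\rangle_\pi \mathbf{1}$ onto constants) intertwines via $\Lambda$ with $Q^n - \pi_Q$, the analogous object for $Q$ restricted to $\ell^2_0(\pi_Q)$; here one must check that $\Lambda$ maps constants to constants up to scaling, equivalently that $\Lambda^{-1}$ preserves the mean-zero subspace, which follows because $\mathbf{1}$ is the eigenfunction of both $P$ and $Q$ for eigenvalue $1$ and the eigenspaces correspond under $\Lambda$ by Theorem \ref{thm:mcins}\ref{it:resolution}. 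Then
$$\norm{P^n - \pi}_{op} = \norm{\Lambda(Q^n - \pi_Q)\Lambda^{-1}}_{op} \leq \norm{\Lambda}_{op}\,\norm{Q^n-\pi_Q}_{op}\,\norm{\Lambda^{-1}}_{op} = \kappa(\Lambda)\,\lambda_*^n,$$
where the last equality uses that for the normal operator $Q$, the norm of $Q^n - \pi_Q$ on $\ell^2_0(\pi_Q)$ is exactly $\lambda_*(Q)^n = \lambda_*(P)^n$ by the spectral theorem and the fact that $\sigma(P) = \sigma(Q)$.

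Next I would recall the reversibilization bound. Fill's argument, or a direct computation, gives $\norm{P^n - \pi}_{op} \leq \sigma_*^n(P)$: indeed $P^n - \pi$ and its adjoint $\widehat P^n - \pi$ satisfy $(P^n-\pi)(\widehat P^n - \pi) = (P\widehat P)^n - \pi$ on $\ell^2_0(\pi)$ by invariance, so $\norm{P^n-\pi}_{op}^2 = \norm{(P\widehat P)^n - \pi}_{op} \leq \lambda_*(P\widehat P)^n = \sigma_*^{2n}(P)$ since $P\widehat P$ is self-adjoint and nonnegative. Combining the two upper bounds gives the $\min$ in \eqref{eq:hyp}; the lower bound $\lambda_*^n \leq \norm{P^n-\pi}_{op}$ is immediate since any eigenfunction $f_k \in \ell^2_0(\pi)$ with $|\lambda_k| = \lambda_*$ witnesses $\norm{(P^n-\pi)f_k}_\pi = \lambda_*^n \norm{f_k}_\pi$. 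The identification of the $\min$ with the indicator expression is the elementary observation that $t \mapsto \sigma_*^t$ and $t \mapsto \kappa(\Lambda)\lambda_*^t$ cross at $t = n^*$ (using $\lambda_* \leq \sigma_*$, which itself follows from the lower bound applied to $P\widehat P$, or from the $\min$ being consistent), so the smaller one is $\sigma_*^n$ for $n < n^*$ and $\kappa(\Lambda)\lambda_*^n$ for $n \geq n^*$.

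For the sufficient condition $\max_i P(i,i) > \lambda_*$, I would note that on a finite state space $\sigma_*^2(P) = \lambda_*(P\widehat P)$ and use a Rayleigh-quotient / trace lower bound: $(P\widehat P)(i,i) = \sum_j P(i,j)\widehat P(j,i) = \sum_j \frac{\pi(j)}{\pi(i)}P(i,j)^2 \geq P(i,i)^2$ is not quite the route; instead test $P\widehat P$ against $\delta_i$ in the $\ell^2_0$ sense after centering, or more simply observe $\langle (P\widehat P)e_i, e_i\rangle_\pi \geq P(i,i)^2$ suitably normalized, so $\sigma_*(P) \geq \max_i P(i,i) > \lambda_*$ once one checks the diagonal entry survives the projection onto $\ell^2_0(\pi)$ — this last point requires a short argument that the top eigenvalue $1$ of $P\widehat P$ does not absorb all the diagonal mass, handled by the standard interlacing/variational estimate. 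Finally, item \ref{it:totalvar} follows by inserting the improved operator-norm bound $\min(\sigma_*^n(P), \kappa(\Lambda)\lambda_*^n)$ in place of $\sigma_*^n(P)$ into Fill's inequality \eqref{eq:tvfill}; one re-runs Fill's proof, which only uses $\norm{P^n - \pi}_{op}$ as input (via $d(n) \leq \frac12 \sqrt{(1-\pi_{min})/\pi_{min}}\,\norm{P^n-\pi}_{op}$ after a Cauchy–Schwarz step), together with $\lambda_* \leq \sigma_*(P)$ already recorded. The main obstacle is the careful bookkeeping that $\Lambda$ and $\Lambda^{-1}$ restrict to bounded operators between the mean-zero subspaces $\ell^2_0(\pi_Q)$ and $\ell^2_0(\pi)$ with the same condition number, and the diagonal-entry lower bound for $\sigma_*(P)$; both are routine but need the correspondence of the eigenvalue-$1$ eigenspaces from Theorem \ref{thm:mcins}\ref{it:resolution}.
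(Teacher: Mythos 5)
Your treatment of the two main inequalities is essentially the paper's strategy: transfer the problem to the normal kernel $Q$ through $\Lambda$ (you do this via the operator identity $P^n-\pi=\Lambda(Q^n-\pi_Q)\Lambda^{-1}$, the paper does the same thing in coordinates via the Riesz basis $f_k=\Lambda q_k$ and a synthesis-operator/Parseval computation), combine with Fill's singular-value bound, and feed the resulting operator-norm bound into the total-variation estimate $d(n)\leq\frac12\norm{P^n-\pi}_{\ell^2(\pi)\to\ell^2(\pi)}\sqrt{(1-\pi_{min})/\pi_{min}}$. That part is sound, with one caveat you should state more carefully: the fact that $\Lambda\mathbf{1}\propto\mathbf{1}$ (eigenvalue-$1$ eigenspaces correspond) is \emph{not} equivalent to $\Lambda^{-1}$ mapping $\ell^2_0(\pi)$ into $\ell^2_0(\pi_Q)$; the latter is the adjoint statement $\widehat{\Lambda}\mathbf{1}\propto\mathbf{1}$, which follows from Theorem \ref{thm:mcins}\ref{it:sbd} ($\widehat{Q}\widehat{\Lambda}=\widehat{\Lambda}\widehat{P}$ and $\widehat{P}\mathbf{1}=\mathbf{1}$) together with simplicity of the eigenvalue $1$. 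This is exactly the implicit step the paper also needs (there it appears as $\langle g,f_0^*\rangle_\pi=0$ for mean-zero $g$), so it is fine, but your "equivalently" is not justified as written.

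Two genuine problems remain. First, your "direct computation" of the bound $\norm{P^n-\pi}\leq\sigma_*^n(P)$ rests on the identity $(P^n-\pi)(\widehat{P}^n-\pi)=(P\widehat{P})^n-\pi$, which is false unless $P$ and $\widehat{P}$ commute (i.e.\ $P$ normal): one only gets $P^n\widehat{P}^n-\pi$, and $P^n\widehat{P}^n\neq(P\widehat{P})^n$ in general. The bound itself is true, but the correct route is either to cite \cite[Theorem $2.1$]{Fill91} as the paper does, or to note $\norm{P^n-\pi}=\norm{P^n|_{\ell^2_0(\pi)}}\leq\norm{P|_{\ell^2_0(\pi)}}^n=\lambda_*(P\widehat{P})^{n/2}$ by submultiplicativity on the invariant subspace $\ell^2_0(\pi)$. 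Second, the sufficient condition "$\max_i P(i,i)>\lambda_*$ implies $\lambda_*<\sigma_*(P)$" is not proved in your proposal: the Rayleigh-quotient sketch is abandoned midway ("not quite the route"), and the concluding appeal to a "standard interlacing/variational estimate" is not an argument — note that any proof must relate the diagonal entries of $P$ to its \emph{second} largest singular value, which is precisely why the paper invokes the Sing--Thompson theorem \cite{Thompson77} (weak majorization of the diagonal by the singular values of the symmetrized matrix $D_\pi^{1/2}PD_\pi^{-1/2}$); a generic variational lower bound on $\langle P\widehat{P}e_i,e_i\rangle_\pi$ only controls the largest singular value, which is $1$, and so gives nothing. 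As it stands this part of the statement is left without proof in your proposal, so you should either supply the Sing--Thompson argument or cite it as the paper does.
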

\begin{rk}
	Recall that when $P$ is reversible and compact then the sequence of eigenfunctions is orthonormal and  thus an application of the Parseval identity yields the well-known result (see e.g. \cite[Section $4.3$]{CSC10}) $\norm{P^n - \pi}_{\ell^2(\pi) \to \ell^2(\pi)} =  \lambda_*^n$ and $\kappa(\Lambda) = 1$ which  is a specific instance of  item \eqref{it:varbd}. 
\end{rk}
\begin{rk}
	We also recall the discrete analogue of the notion of hypocoercivity introduced in \cite{V09}, i.e.~there exists a constant $C < \infty$ and $\rho \in (0,1)$ such that, for all $ n \in \mathbb{N}$,
	$$\norm{P^n - \pi}_{\ell^2(\pi) \to \ell^2(\pi)} \leq C \rho^n.$$
	Note that, in general, these constants are not known explicitly. We observe that the upper bound in \eqref{eq:hyp} reveals that the ergodic chains in $\S$ satisfy this hypocoercivity phenomena. More interestingly, our approach based on the similarity concept enables us to get on the one hand an explicit and on the other hand a spectral interpretation of this rate of convergence. Indeed, it can be understood as a modified spectral gap  where the perturbation  from the classical spectral gap is given by the condition number $\kappa(\Lambda)$ which can be interpreted as a measure of deviation from symmetry. In this vein, we  mention the recent work \cite{Patie-Savov} where a similar spectral interpretation of the hypocoercivity phenomena is given for a class of non-self-adjoint Markov semigroups.
\end{rk}

\begin{rk}
	Here we provide an alternative expression for the upper bound of \eqref{eq:hyp}. Let $\ell^2(\mathcal{X})$ be the space of square summable functions on $\mathcal{X}$ equipped with the standard inner product $\langle\cdot,\cdot\rangle$. Suppose that $P \in \mathcal{S}(Q)$ with $QU = UD$, where $D$ is a diagonal matrix and $U$ is an isometry from $\ell^2(\mathcal{X})$ to $\ell^2(\pi_Q)$. Then $B = \Lambda U$ is an eigenbasis for $P$ with $\kappa(B) = \kappa(\Lambda)$, where $\kappa(B) := \norm{B}_{\ell^2(\mathcal{X}) \to \ell^2(\pi)} \norm{B^{-1}}_{\ell^2(\pi) \to \ell^2(\mathcal{X})}$ is the condition number of $B$, so the upper bound in \eqref{eq:hyp} can be written as
	$$\norm{P^n - \pi}_{\ell^2(\pi) \to \ell^2(\pi)} \leq  \kappa(B)  \lambda_*^n.$$
\end{rk}

As a second application of Theorem \ref{thm:mcins}, we first recall the celebrated eigentime identity studied by \cite{AF14,CuiMao10} and \cite{Miclo15}: suppose that we sample two points $x$ and $y$ randomly from the stationary distribution of the chain and calculate the expected hitting time from $x$ to $y$, the expected value of this procedure is the sum of the inverse of the non-zero (and negative of the) eigenvalues of the generator. Since similarity preserves the eigenvalues (see Theorem \ref{thm:mcins} item \ref{it:resolution}), we can easily see that both $P$ and $Q$ share the same eigentime identity:

\begin{corollary}[Eigentime identity]\label{cor:eigentime}
	Suppose that $\mathcal{X}$ is a finite state space and $(Q_t)_{t \geq 0}$ (resp.~$(P_t)_{t \geq 0}$) has generator $G$ (resp.~$L$) associated with the ergodic Markov chain $(X_t)_{t \geq 0}$ (resp.~$(Y_t)_{t \geq 0}$). If $L \in \mathcal{S}(G)$ with $G$ being a normal generator and common eigenvalues $(-\lambda_i)_{i \in \llbracket |\mathcal{X}| \rrbracket}$, then $(P_t)_{t \geq 0}$ and $(Q_t)_{t \geq 0}$ share the same eigentime identity. That is, denote $\tau_y^Q := \inf\{t \geq 0; X_t = y\}$~(resp.~$\tau_y^P := \inf\{t \geq 0; Y_t = y\}$), then
	$$\sum_{x,y \in \mathcal{X}} \E_x(\tau_y^Q) \pi_Q(x) \pi_Q(y) = \sum_{x,y \in \mathcal{X}} \E_x(\tau_y^P) \pi(x) \pi(y) = \sum_{i=1, \lambda_i \neq 0}^{|\mathcal{X}|} \dfrac{1}{\lambda_i}.$$
\end{corollary}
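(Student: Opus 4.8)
The plan is to reduce the eigentime identity for $L$ to the already-known eigentime identity for the normal generator $G$, using that similarity preserves the spectrum (Theorem \ref{thm:mcins}\ref{it:resolution}, in its continuous-time form of Remark \ref{rk:cont2}). The right-most equality $\sum_{x,y} \E_x(\tau_y^Q)\pi_Q(x)\pi_Q(y) = \sum_{i:\lambda_i\neq 0} 1/\lambda_i$ is precisely the eigentime identity for the normal (in particular, it applies to reversible or birth-death) chain $(X_t)_{t\ge 0}$, which is the content of the cited works \cite{AF14,CuiMao10,Miclo15}; so I would simply invoke it. The real work is the left-most equality, namely that the \emph{eigentime constant} $\sum_{x,y}\E_x(\tau_y)\pi(x)\pi(y)$ is itself a spectral invariant: it depends on the generator only through its non-zero eigenvalues, hence is unchanged under $L \stackrel{\Lambda}{\sim} G$.

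The key steps, in order. First, recall the standard representation of the eigentime identity in terms of the fundamental matrix (or deviation matrix / group inverse) of the generator. For an ergodic finite chain with generator $L$ and stationary law $\pi$, let $\Pi = \mathbf{1}\pi$ be the rank-one projection onto constants along $\ell^2_0(\pi)$, and let $Z = (\Pi - L)^{-1} - \Pi$ be the deviation matrix (equivalently $Z = -L^{\#}$, the group inverse of $-L$ up to sign conventions). The classical identity of Aldous--Fill / Cui--Mao / Miclo states
\begin{equation*}
\sum_{x,y\in\mathcal{X}} \E_x(\tau_y)\,\pi(x)\pi(y) \;=\; \mathrm{tr}\,Z \;=\; \sum_{i:\lambda_i\neq 0} \frac{1}{\lambda_i},
\end{equation*}
and crucially this holds for \emph{any} ergodic finite generator, not just normal ones — the proof via the fundamental matrix does not use reversibility or normality. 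So I would either cite this in the stated generality or include the short derivation: $\E_x(\tau_y) = (Z_{yy} - Z_{xy})/\pi(y)$, and averaging over $x,y \sim \pi$ kills the $Z_{xy}$ term by $\sum_x \pi(x) Z_{xy} = 0$, leaving $\sum_y Z_{yy} = \mathrm{tr}\,Z$.

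Second, I would show $\mathrm{tr}\,Z = \sum_{i:\lambda_i\neq 0} 1/\lambda_i$ directly from the spectral decomposition $L = \int_{\sigma(L)} \lambda\, dF_\lambda = \sum_i \lambda_i F_{\{\lambda_i\}}$ afforded by Theorem \ref{thm:mcins}\ref{it:resolution}/Remark \ref{rk:cont2}, where $F_{\{\lambda_i\}} = \Lambda E_{\{\lambda_i\}}\Lambda^{-1}$ are the (non-orthogonal) spectral projections, $\sum_i F_{\{\lambda_i\}} = I$, and $F_{\{0\}} = \Pi$ since ergodicity forces the eigenvalue $0$ to be simple with eigenprojection $\Pi$. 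Then $Z = (\Pi - L)^{-1} - \Pi = \sum_{i:\lambda_i\neq 0} \frac{1}{1-\lambda_i}F_{\{\lambda_i\}} \cdot(\ldots)$ — more cleanly, on $\ell^2_0(\pi) = \mathrm{ran}(I-\Pi)$ one has $Z = (-L)^{-1}|_{\ell^2_0(\pi)} = \sum_{i:\lambda_i\neq 0}\frac{1}{-\lambda_i}F_{\{\lambda_i\}}$ — wait, I should be careful with the sign of the eigenvalues of a generator; the eigenvalues are $-\lambda_i$ with $\lambda_i \ge 0$, so $Z = \sum_{i:\lambda_i\neq 0}\frac{1}{\lambda_i}F_{\{\lambda_i\}}$. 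Taking traces and using $\mathrm{tr}\,F_{\{\lambda_i\}} = \mathrm{tr}(\Lambda E_{\{\lambda_i\}}\Lambda^{-1}) = \mathrm{tr}\,E_{\{\lambda_i\}} = \dim \mathrm{ran}\,E_{\{\lambda_i\}} = m_i$, the algebraic multiplicity, gives $\mathrm{tr}\,Z = \sum_{i:\lambda_i\neq 0} m_i/\lambda_i = \sum_{i:\lambda_i\neq 0} 1/\lambda_i$ (with eigenvalues listed with multiplicity). The same computation applied to $G$ gives the identical value, since $G$ and $L$ have the same eigenvalues with the same multiplicities by Theorem \ref{thm:mcins}\ref{it:resolution}. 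Chaining the two eigentime identities through this common spectral sum yields the claim.

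The main obstacle — really the only subtle point — is handling multiplicities and the trace-of-spectral-projection step cleanly, i.e.\ justifying $\mathrm{tr}(\Lambda E_B \Lambda^{-1}) = \mathrm{tr}\,E_B$ on the finite state space (immediate by cyclicity of trace) and confirming that the spectral projections $F_{\{\lambda_i\}}$ sum to the identity and that $F_{\{0\}} = \Pi$ under ergodicity. If one prefers to sidestep the deviation-matrix computation entirely, an even shorter route is available: simply note that the eigentime sum $\sum_{i:\lambda_i\ne0}1/\lambda_i$ is manifestly a function of the spectrum alone, that Theorem \ref{thm:mcins}\ref{it:resolution} gives $\sigma(L)=\sigma(G)$ with matching multiplicities, and that both $\E$-averages equal this sum by the Aldous--Fill eigentime identity applied to each chain separately (the identity requiring only ergodicity, which is assumed). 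This makes the corollary essentially a one-line consequence of Theorem \ref{thm:mcins}, and I expect the paper's proof to take exactly this line.
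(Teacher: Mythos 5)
Your proposal is correct, and its closing ``one-line'' route is exactly what the paper does: the corollary carries no separate proof beyond the sentence preceding it, namely that similarity preserves the spectrum (Theorem \ref{thm:mcins}\ref{it:resolution}, in the continuous-time form of Remark \ref{rk:cont2}), so one applies the eigentime identity of the cited works --- in particular Cui and Mao \cite{CuiMao10}, which covers general (non-reversible) ergodic finite chains --- to each of $(P_t)_{t\ge 0}$ and $(Q_t)_{t\ge 0}$ separately and equates the two through the common sum $\sum_{i:\lambda_i\neq 0}1/\lambda_i$. The bulk of your write-up, the deviation-matrix computation $Z=(\Pi-L)^{-1}-\Pi$, $\E_x(\tau_y)=(Z_{yy}-Z_{xy})/\pi(y)$, $\pi Z=0$, and $\mathrm{tr}\,Z=\sum_{i:\lambda_i\neq 0}1/\lambda_i$ via $\mathrm{tr}(\Lambda E_{\{\lambda_i\}}\Lambda^{-1})=\mathrm{tr}\,E_{\{\lambda_i\}}$, is a correct self-contained verification of the identity that the paper simply delegates to the citation; it buys independence from \cite{CuiMao10} (and in fact works for any ergodic finite generator, diagonalizable or not, using algebraic multiplicities), at the cost of being longer than needed here, where ergodicity plus the simplicity of the zero eigenvalue and $F_{\{0\}}=\Pi$ are the only points requiring the care you already flag.
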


\subsection{Proof of Theorem \ref{thm:mcins}}\label{subsec:proof1}
	We first show the item \ref{it:resolution}. Since $\mathcal{E}$ is a spectral measure, it follows easily that $\{F_B = \Lambda E_B \Lambda^{-1};~B \in \mathcal{B}(\mathcal{C})\}$ is a spectral measure. The fact that the spectrum coincides and
	$$\sigma(P) = \sigma(Q), \sigma(P) = \overline{\sigma(\widehat{P})}, \sigma_c(P) = \sigma_c(Q), \sigma_p(P) = \sigma_p(Q), \sigma_r(P) = \sigma_r(Q),$$ follows from Proposition $3.9$ in \cite{AT14}. Define $\overline{P} := \int_{\sigma(P)} \lambda \,d F_{\lambda}.$ We have
	$$\overline{P} = \int_{\sigma(P)} \lambda \,d (\Lambda E_{\lambda} \Lambda^{-1}) = \Lambda \left(\int_{\sigma(Q)} \lambda \,d E_{\lambda}\right) \Lambda^{-1} = \Lambda Q \Lambda^{-1} = P,$$
	so the desired spectral resolution of $P$ follows, thus it is a spectral scalar-type operator. The spectral resolution of $\widehat{P}$ follows from that of $P$. The functional calculus of $P$ follows immediately from that of spectral scalar-type operator, see e.g. Theorem $1$ in Chapter $XV.5$, Page $1941$ of \cite{DS71}. We proceed to handle the case when $P$ is compact.
	Denote $(g_k)$ to be the (orthogonal) eigenfunctions of the normal transition kernel $Q$. Since $f_k = \Lambda g_k$ and $\Lambda$ is bounded, $(f_k)$ is complete as $(g_k)$ is a basis. As $\Lambda$ is bounded from above and below, for any $n \in \N$ and arbitrary sequence $(c_k)_{k=1}^{n}$, we have
	\begin{align*}
	A \sum_{k=1}^n |c_k|^2 \leq \norm{\sum_{k=1}^n c_k f_k}^2_{\pi} = \norm{\Lambda \sum_{k=1}^n c_k g_k}^2_{\pi} \leq B \sum_{k=1}^n |c_k|^2,
	\end{align*}
	where we can take $A = \norm{\Lambda^{-1}}^{-2}$ and $B = \norm{\Lambda}^2$, so \eqref{eq:Rieszb} is satisfied. It follows from \cite[Theorem $9$]{Y01} that there exists the sequence $(f_k^*)$ being the unique Riesz basis biorthogonal to $(f_k)$, and, any $f \in \ell^2(\pi)$ can be written as
	$$f = \sum_{k \in \mathcal{X}} c_k f_k,$$
	where $c_k = \langle f , f_k^* \rangle_{\pi}$. Desired result follows by applying $P^n$ to $f$ and using $P^n f_k = \lambda_k^n f_k$. In particular, if we take $f = \delta_y$, the Dirac mass at $y$, and evaluate the resulting expression at $x$, we obtain the spectral expansion of $P$. Next, we show item \ref{it:sbd}. If $P \stackrel{\Lambda}{\sim} Q$, then for $f \in \ell^2(\pi_{Q})$ and $g \in \ell^2(\pi)$,
	$$\langle f, \widehat{\Lambda} \widehat{P} g \rangle_{\pi_{Q}} = \langle P \Lambda f, g \rangle_{\pi} = \langle \Lambda Q f, g \rangle_{\pi} = \langle f, \widehat{Q} \widehat{\Lambda} g \rangle_{\pi_{Q}},$$
	which shows that $\widehat{Q} \stackrel{\widehat{\Lambda}}{\sim} \widehat{P}$. The opposite direction can be shown similarly. For item \ref{it:unitary}. Since $\Lambda$ is unitary, the spectral measures of $P$ and $Q$ are related by $F_B = \Lambda E_B \widehat{\Lambda}$, so $F_B$ is self-adjoint if and only if $E_B$ is self-adjoint, which implies that $P$ is normal if and only if $Q$ is normal. If $Q$ is self-adjoint, then item \ref{it:sbd} yields $P \stackrel{\Lambda}{\sim} Q$ if and only if $Q \stackrel{\Lambda^{-1}}{\sim} \widehat{P}$, which implies that $\widehat{P} = P$ in $\ell^2(\pi)$. The opposite direction can be shown similarly. Next, we show item \ref{it:lattice}. If $\Lambda$ is a permutation link, then it is trivial to see that $\Lambda$ is an invertible Markov kernel. For the opposite direction, it is known (see e.g. \cite[Section $3$]{BP74}) that $\Lambda = D \Lambda_{\sigma}$, where $D$ is a diagonal matrix. We then have
	$\mathbf{1} = \Lambda \mathbf{1} = D \Lambda_{\sigma} \mathbf{1} = D \mathbf{1},$
	which gives $D = I$, and hence $\Lambda = \Lambda_{\sigma}$. Let now $Q \in \mathcal{M}$ and $P \in \mathcal{S}_{\mathcal{P}}(Q)$, then since  $P=\Lambda Q \Lambda^{-1}$ with $\Lambda,\Lambda^{-1} \in \mathcal{P}$, we deduce readily that $P \in \mathcal{M}$.
	Finally, to show item \ref{it:eig}, if $P \stackrel{\Lambda}{\sim} Q$, then $P$ has real and distinct eigenvalues since $Q$ has real and distinct eigenvalues. Conversely, if $P$ has real and distinct eigenvalues, $P$ is diagonalizable, so there exists an invertible $\Lambda$ such that
	$$P = \Lambda D \Lambda^{-1}.$$
	where $D$ is the diagonal matrix storing the eigenvalues of $P$. Given the spectral data $D$, by inverse spectral theorem, see e.g.~\cite[Section 5.8]{DM76}, one can always construct an ergodic Markov chain with transition matrix $Q$ such that
	$$Q = VDV^{-1}.$$

\subsection{Proof of Corollary \ref{cor:spectralexp}}\label{subsec:proof2}
	We first show the upper bound in item \eqref{it:varbd}. Define the synthesis operator $T^* : \ell^2 \to \ell^2(\pi)$ by $\alpha = (\alpha_i) \mapsto T^*(\alpha) = \sum_{i \in \mathcal{X}} \alpha_i f_i$, where $(f_i)$ are the eigenfunctions of $P$ and $(f_i^*)$ are the unique biorthogonal basis of $(f_i)$ as in Theorem \ref{thm:mcins}. For $i \in \mathcal{X}$, we take $\alpha_i = \alpha_i(n) = \lambda_i^n \langle g,f_i^* \rangle_{\pi}$, and denote $(q_i)$ to be the orthonormal eigenfunctions of $Q ~\in \mathcal{N}$, where $f_i = \Lambda q_i$. Note that $||T^*||_{op} \leq ||\Lambda||_{\ell^2(\pi_Q) \to \ell^2(\pi)} < \infty$, since
	$$||T^*(\alpha)||_{ \ell^2 \to \ell^2(\pi)} = \norm{\sum_{i \in \mathcal{X}} \alpha_i \Lambda q_i}_{ \ell^2 \to \ell^2(\pi)} \leq \norm{\Lambda}_{\ell^2(\pi_Q) \to \ell^2(\pi)} \norm{\sum_{i \in \mathcal{X}} \alpha_i q_i}_{\pi_Q} \leq ||\Lambda||_{\ell^2(\pi_Q) \to \ell^2(\pi)} ||\alpha||_{\ell^2}.$$
	For $g \in \ell^2_0(\pi)$, we also have
	\begin{align*}
	\sum_{i \in \mathcal{X}} |\langle g,f_i^* \rangle_{\pi}|^2 &= \sum_{i \in \mathcal{X}} |\langle g, (\Lambda^{*})^{-1}q_i \rangle_{\pi}|^2
	= \sum_{i \in \mathcal{X}} |\langle \Lambda^{-1}g, q_i \rangle_{\pi_Q}|^2
	= ||\Lambda^{-1}g||^2_{\pi_Q}
	\leq ||\Lambda^{-1}||^2_{\ell^2(\pi) \to \ell^2(\pi_Q)} ||g||^2_{\pi},
	\end{align*}
	where the third equality follows from Parseval's identity, which leads to
	\begin{align}\label{eq:l2}
	|| P^ng - \pi g||_{\pi}^2 = ||T^*(\alpha)||^2_{\ell^2 \to \ell^2(\pi)} \leq ||\Lambda||_{\ell^2(\pi_Q) \to \ell^2(\pi)}^2 ||\alpha||_{l^2}^2 \leq ||\Lambda||_{\ell^2(\pi_Q) \to \ell^2(\pi)}^2 ||\Lambda^{-1}||_{\ell^2(\pi) \to \ell^2(\pi_Q)}^2 \lambda_*^{2n}  ||g||^2_{\pi}.
	\end{align}
	The desired upper bound follows from \eqref{eq:l2} and  \begin{equation*}
	\norm{P^n - \pi}_{\ell^2(\pi) \to \ell^2(\pi)} \leq \lambda_*(\widehat{P}P)^{n/2} = \lambda_*(P\widehat{P})^{n/2},
	\end{equation*}
	see e.g.~\cite{Fill91}. The lower bound in \eqref{it:varbd} follows readily from the well-known result that the $n^{th}$ power of the spectral radius $\lambda_*^n$ is less than or equal to the norm of $P^n$ on the reduced space $\ell^2_0(\pi)$.
	For the sufficient condition in item \eqref{it:varbd}, that is, $\max_{i \in \mathcal{X}} P(i,i) > \lambda_*$ implies $\lambda_* < \sigma_*(P)$, it is a straightforward consequence of the Sing-Thompson Theorem, see \cite{Thompson77}.
	Next, we show item \eqref{it:totalvar}. Using \eqref{eq:l2}, we get
	\begin{align}\label{eq:vareig}
	\mathrm{Var}_{\pi} \left({\widehat{P}}^n g \right) &\leq \kappa(\widehat{\Lambda})^2 \lambda_*^{2n} \mathrm{Var}_{\pi}(g) = \kappa(\Lambda)^2\lambda_*^{2n} \mathrm{Var}_{\pi}(g), \quad n \in \mathbb{N}_0,
	\end{align}
	where we used the obvious identity $\kappa(\Lambda) = \kappa(\widehat{\Lambda})$ in the equality. This leads to
	\begin{align*}
	|| \delta_x P^n -  \pi ||_{TV}^2 &= \dfrac{1}{4} \E_{\pi}^2 \left| \dfrac{\delta_x P^n}{\pi} - 1 \right|
	\leq \dfrac{1}{4} \mathrm{Var}_{\pi} \left( \dfrac{\delta_x P^n}{\pi} \right)
	= \dfrac{1}{4} \mathrm{Var}_{\pi} \left({\widehat{P}}^n \frac{\delta_x}{\pi} \right)
	\leq \dfrac{1}{4} \kappa(\Lambda)^2 \lambda_{*}^{2n} \mathrm{Var}_{\pi} \left( \frac{\delta_x}{\pi} \right) \\
	&= \dfrac{1}{4} \kappa(\Lambda)^2 \lambda_{*}^{2n} \dfrac{1 - \pi(x)}{\pi(x)} \leq \dfrac{1}{4} \kappa(\Lambda)^2 \lambda_{*}^{2n} \dfrac{1 - \pi_{min}}{\pi_{min}},
	\end{align*}
	where the first inequality follows from Cauchy-Schwartz
	inequality. The proof is completed  by combining the above bound with \eqref{eq:tvfill}.

\section{The $\GMc$ class and separation cutoff}\label{sec:sepcut}

As Theorem \ref{thm:mcins} suggests, the class $\mathcal{S}$ is highly tractable and enjoys a number of attractive properties. It will therefore be very interesting to characterize this class in terms of the one-step transition probabilities of $P$, which is a fundamental yet challenging issue. However, we manage to identify a set of sufficient conditions that we call the generalized monotonicity condition class $\GMc$, generalizing the $\Mc$ class for skip-free chains as introduced in \cite{Choi-Patie}, such that the kernels in $\GMc$ has real and distinct eigenvalues and the time-reversal $\widehat{P}$ intertwines with a birth-death chain with the link kernel $\Lambda$ being related to the Siegmund kernel $H_S(x,y) = \1_{\{x \leq y\}}$.


\begin{definition}[The $\GMc$ class]\label{def:GMcclass}
	We say that, for some $\r\geq 3$,  $P \in \GMc_{\r}$ if $P \in \Sf$ with $\mathcal{X}=\llbracket 0,\r \rrbracket$ and for every $x \in \llbracket 0,\r-1 \rrbracket$, its time-reversal $(X,\widehat{\mP})$  satisfies
	\begin{enumerate}
		\item(stochastic monotonicity) $\widehat{\mP}_{x+1}(X_1 \leq x)\leq \widehat{\mP}_{x}(X_1 \leq x) $\,,
		\item(strict stochastic monotonicity) $\widehat{\mP}_{x+1}(X_1 \leq x-1) <\widehat{\mP}_{x}(X_1 \leq x-1), \quad x\neq 0, \quad \text{and}$
		\item(strict stochastic monotonicity) $\widehat{\mP}_{x+1}(X_1 \leq x+1) < \widehat{\mP}_{x}(X_1 \leq x+1), \quad x\neq \r-1, \quad \text{and}$
		\item(restricted downward jump) $\widehat{\mP}_{x+1}(X_1 \leq x-k) = \widehat{\mP}_x(X_1 \leq x-k), \quad k \in \llbracket 2,x \rrbracket, \quad \text{and}$
		\item(restricted upward jump) $\widehat{\mP}_{x+1}(X_1 \leq x+k) = \widehat{\mP}_x(X_1 \leq x+k), \quad k \in \llbracket 2,\r-1-x \rrbracket.$
	\end{enumerate}	
	Moreover, we say  $X \in \GMc^+_{\r}$ if $X \in \GMc_{\r}$ and for every $x \in \llbracket 0,\r-1 \rrbracket$,
	\begin{enumerate}[resume]
		\item(lazy Siegmund dual)
		$\widehat{\mP}_x(X_1 \leq x) - \widehat{\mP}_{x+1}(X_1 \leq x) \geq \dfrac{1}{2}.$
	\end{enumerate}
	When there is no ambiguity of the state space, we write $\GMc = \GMc_{\r}$ (resp.~$\GMc^+ = \GMc_{\r}^+$). Note that the upper-script of the plus sign in $\GMc^+$ means that this class has non-negative eigenvalues, see Remark \ref{rk:gmcplus} below.
\end{definition}

\begin{rk}\label{rk:Mc}
	Recall that in \cite{Choi-Patie}, if $P \in \Mc$, that is, $P$ is upward skip-free and satisfies $(1),(3),(5)$, then it is clear that $\Mc \subset \GMc$, as item $(2)$ and $(4)$ in Definition \ref{def:GMcclass} are automatically satisfied since the time-reversal $\widehat{P}$ is downward skip-free.
\end{rk}



Next, we formally state that a transformation of $P \in \GMc$ is contained in the similarity orbit of an irreducible birth-death kernel. The proof can be found in Section \ref{subsec:proofgmc}. For any square matrix $M$ on $\llbracket 0,\r \rrbracket$, we write $M^{\llbracket 0,\r-1 \rrbracket}$ to be the principal submatrix on $\llbracket 0,\r-1 \rrbracket$.
\begin{theorem}\label{thm:Mc}
	Let $P \in \GMc$ and write $M := P \Lambda$ on $\llbracket 0,\r \rrbracket$, where $\Lambda = (H_S^T D_{\pi})^{-1}$ and $D_{\pi}$ is the diagonal matrix of $\pi$.  Then $(\Lambda^{-1})^{\llbracket 0,\r-1 \rrbracket}(P \Lambda)^{\llbracket 0,\r-1 \rrbracket} \in \mathcal{S}(Q)$ with $Q$ being an irreducible birth-death transition kernel on  $\llbracket 0,\r -1\rrbracket$.
\end{theorem}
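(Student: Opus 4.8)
The plan is to show that $\widetilde P:=(\Lambda^{-1})^{\llbracket 0,\r-1\rrbracket}(P\Lambda)^{\llbracket 0,\r-1\rrbracket}$ is diagonally similar to a symmetric tridiagonal matrix, by identifying it with the restriction to $\llbracket 0,\r-1\rrbracket$ of a Siegmund-type transform of the time-reversal $\widehat{\mP}$ and then feeding in the five conditions defining $\GMc$. First I would make $\Lambda$ and $\Lambda^{-1}$ explicit: from $\Lambda=(H_S^TD_\pi)^{-1}$ one gets $\Lambda^{-1}(i,j)=\pi(j)\1_{\{j\le i\}}$ and $\Lambda(i,j)=\pi(i)^{-1}\big(\1_{\{j=i\}}-\1_{\{j=i-1\}}\big)$, so both $\Lambda$ and $\Lambda^{-1}$ are lower triangular and the $\r$-th column of $\Lambda^{-1}$ vanishes on the rows $\llbracket 0,\r-1\rrbracket$; hence the product of the principal submatrices equals the principal submatrix of the product, $\widetilde P=(\Lambda^{-1}P\Lambda)^{\llbracket 0,\r-1\rrbracket}$. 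Using the time-reversal identity in the form $\pi(j)P(j,k)=\pi(k)\widehat{P}(k,j)$ and expanding the matrix products one obtains, for $i,k\in\llbracket 0,\r-1\rrbracket$,
\[
\widetilde P(i,k)=\widehat{\mP}_k(X_1\le i)-\widehat{\mP}_{k+1}(X_1\le i),
\]
i.e.\ $\Lambda^{-1}P\Lambda=H_S^{T}\widehat{P}^{T}(H_S^{T})^{-1}$ is the transpose of the Siegmund transform $H_S^{-1}\widehat P H_S$ of $\widehat P$. Along the way one checks that the last row of $\Lambda^{-1}P\Lambda$ equals $(0,\dots,0,1)$, so this matrix is block upper triangular and $\sigma(\widetilde P)=\sigma(P)\setminus\{1\}$; in particular the full $(\r+1)\times(\r+1)$ matrix is not itself tridiagonal (its last column is generically non-sparse), which is precisely why one passes to the principal submatrix on $\llbracket 0,\r-1\rrbracket$.

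Next I would read the structure of $\widetilde P$ off Definition \ref{def:GMcclass}. Conditions (4) and (5) say precisely that $\widehat{\mP}_k(X_1\le i)=\widehat{\mP}_{k+1}(X_1\le i)$ for all admissible $i,k$ with $|i-k|\ge 2$, so $\widetilde P$ is tridiagonal on $\llbracket 0,\r-1\rrbracket$; condition (1) (with its index taken to be $i$) gives $\widetilde P(i,i)\ge 0$; condition (2) (with index $i+1$) gives $\widetilde P(i,i+1)>0$ for $i\in\llbracket 0,\r-2\rrbracket$; and condition (3) (with index $i-1$) gives $\widetilde P(i,i-1)>0$ for $i\in\llbracket 1,\r-1\rrbracket$. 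Hence $\widetilde P$ is a tridiagonal matrix on $\llbracket 0,\r-1\rrbracket$ with non-negative diagonal and strictly positive super- and sub-diagonal entries.

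Finally I would symmetrise. With $D=\mathrm{diag}(d_0,\dots,d_{\r-1})$ defined by $d_0=1$ and $d_{i+1}=d_i\sqrt{\widetilde P(i+1,i)/\widetilde P(i,i+1)}$ — legitimate by the previous step — the matrix $Q:=D^{-1}\widetilde P D$ is tridiagonal with the same non-negative diagonal as $\widetilde P$ and with $Q(i,i+1)=Q(i+1,i)=\sqrt{\widetilde P(i,i+1)\widetilde P(i+1,i)}>0$; thus $Q$ is a symmetric, irreducible, non-negative tridiagonal matrix — the transition kernel of an irreducible (in general sub-Markovian) birth-death chain on $\llbracket 0,\r-1\rrbracket$ — and, being symmetric, it is self-adjoint in $\ell^2$, so $Q\in\mathcal N$. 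Since $D$ is bounded and invertible on the finite-dimensional space and $\widetilde P D=DQ$, we conclude $\widetilde P\stackrel{D}{\sim}Q$, that is $\widetilde P\in\S(Q)$; as $Q$ is a Jacobi matrix its spectrum, equal to $\sigma(P)\setminus\{1\}$, is moreover real and simple.

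I do not expect a deep obstacle: the conceptual content — that a Siegmund-type conjugation sends a strictly stochastically monotone kernel to a birth-death one and that similarity transports the spectral picture of Theorem \ref{thm:mcins} — is essentially built in once the first computation is set up. The care required is entirely in the bookkeeping: commuting the principal-submatrix operation past the matrix product via the triangularity of $\Lambda$, keeping track of the transposes in the Siegmund transform, and checking that each of the five conditions of Definition \ref{def:GMcclass} governs exactly the intended entry of $\widetilde P$ at the correct shifted index; the final step is the classical fact that a tridiagonal matrix all of whose off-diagonal pairs are strictly positive is diagonally similar to a symmetric one.
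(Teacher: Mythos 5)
Your reduction is correct and, up to the last step, coincides with the paper's own argument in Section \ref{subsec:proofgmc}: the triangularity bookkeeping giving $(\Lambda^{-1})^{\llbracket 0,\r-1 \rrbracket}(P\Lambda)^{\llbracket 0,\r-1 \rrbracket}=(\Lambda^{-1}P\Lambda)^{\llbracket 0,\r-1 \rrbracket}$, the identification $\Lambda^{-1}P\Lambda=(H_S^{-1}\widehat P H_S)^{T}$ with the transposed Siegmund dual of the time-reversal, and the reading of conditions $(1)$--$(5)$ of Definition \ref{def:GMcclass} as nonnegative diagonal, strictly positive sub- and super-diagonal, and vanishing entries at distance at least two are exactly what the paper does. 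The divergence is at the end, and it is a genuine gap relative to the statement. The theorem asserts membership in $\S(Q)$ with $Q$ an irreducible birth-death \emph{transition kernel}; in this paper $\S(Q)$ is defined only for $Q\in\mathcal N$, i.e.\ a normal \emph{Markov} kernel, and the situation where the similar operator is merely a sub-Markovian contraction is explicitly set aside as the wider class $\mathcal S'$ in Definition \ref{def:Sclass}. Your symmetrization produces a symmetric, irreducible, tridiagonal but substochastic matrix (you concede it is ``in general sub-Markovian''): for instance, in the paper's $4\times 4$ example the restricted matrix has row sums $0.3,\,0.6,\,0.4$, and your diagonal conjugation does not repair this. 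So what you have proved is similarity to a self-adjoint sub-Markovian birth-death kernel, i.e.\ membership in $\mathcal S'$; asserting $Q\in\mathcal N$ for that symmetric $Q$ misuses the paper's definition of $\mathcal N$, and the stated conclusion is not reached.

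The ingredient you are missing is precisely the second half of the paper's proof: $\widetilde P^{bd}$ is treated as a birth-death chain killed through the boundary state $\r$, and one performs a further Doob transform with the function $\widetilde h(x)=\mP_x(\widetilde T^{bd}_{\r-1}<\widetilde T^{bd})$ of \eqref{eq:widetildeh}, whose harmonicity for $\widetilde P^{bd}$ is taken from \cite[Theorem 3.1]{Choi-Patie}; the transformed kernel $Q(x,y)=\frac{\widetilde h(y)}{\widetilde h(x)}\widetilde P^{bd}(x,y)$ is the irreducible birth-death transition kernel of the statement, and the intertwining link is then the (bounded, invertible, diagonal) multiplication by $\widetilde h$. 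This Markovianization step is not cosmetic: it is what places $Q$ in $\mathcal N$ so that the machinery of Theorem \ref{thm:mcins} and the later uses of $Q$ (e.g.\ the absorption-time remark following Lemma \ref{lem:fsstgen}) apply, and your own spectral aside that $\sigma(\widetilde P^{bd})=\sigma(P)\setminus\{1\}$ shows that the stochasticity of a similar kernel is a delicate point that cannot be obtained by symmetrization alone. To prove the theorem as stated, replace (or supplement) your final symmetrization step by this Doob $\widetilde h$-transform argument.
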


\begin{rk}[On the connection to the strong stationary duality theory by \cite{FD}]
	In this remark, we would like to highlight the connection between Theorem \ref{thm:Mc} and the classical construction of strong stationary duality (SSD) proposed by Diaconis and Fill. In \cite[Theorem $5.5$]{FD}, writing $\pi_0$ to be the initial distribution at time $0$, if $\pi_0(x)/\pi(x)$ is decreasing in $x$ and the time-reversal is stochastically monotone, then the SSD of a chain $X$ can be derived as the Doob $H$-transform of the Siegmund dual of the time-reversal of $X$, with $H = H_S^T \pi$ being the cumulative distribution function of $\pi$. 	As we shall see in the proof of Theorem \ref{thm:Mc}, our result bears a resemblance to the above construction by Diaconis and Fill, with $Q$ in Theorem \ref{thm:Mc} being a Doob transform of the state-restriction of the Siegmund dual of the time-reversal. Note that both our result and the classical SSD construction require stochastic monotonicity of the time-reversal, yet our $\GMc$ class requires more conditions (namely item (2) to (4) in Definition \ref{def:GMcclass}). While we apply the same Doob $H$-transform to the Siegmund dual, we apply a further Doob $\widetilde h$-transform for the state-restricted and Doob $H$-transformed Siegmund dual restricted to $\llbracket 0,\r - 1\rrbracket$, where $\widetilde h$ is defined in \eqref{eq:widetildeh} below.
\end{rk}

We now give an example that illustrates the $\GMc$ class.

	\begin{example}
		$$\widehat{P} = \begin{pmatrix}
		0.5 & 0.35 & 0.05 & 0.1 \\
		0.3 & 0.5 & 0.1 & 0.1 \\
		0.2 & 0.1 & 0.5 & 0.2 \\
		0.2 & 0.05 & 0.25 & 0.5 \\
		\end{pmatrix}, \quad P = \begin{pmatrix}
		0.5 & 0.2629 & 0.1157 & 0.1213 \\
		0.3994 & 0.5 & 0.0660 & 0.0346 \\
		0.0864 & 0.1515 & 0.5 & 0.2621 \\
		0.1648 & 0.1444 & 0.1907 & 0.5 \\
		\end{pmatrix}$$ has eigenvalues $1, 0.54, 0.28, 0.18$, and satisfies $(1)-(6)$ in Definition \ref{def:GMcclass}. Note that
		$$(\Lambda^{-1})^{\llbracket 0,\r-1 \rrbracket}(P \Lambda)^{\llbracket 0,\r-1 \rrbracket} = \begin{pmatrix}
		0.2 & 0.1 & 0 \\
		0.05 & 0.5 & 0.05 \\
		0 & 0.1 & 0.3
		\end{pmatrix}.$$
\end{example}

We proceed to investigate the separation cutoff phenomenon for the $\GMc$ class. For birth-death chains, they have been studied in \cite{DSC06} and \cite{CSC15} while it has recently been extended to upward skip-free chains by \cite{MZZ16} and \cite{Choi-Patie}. 
In order to establish the famous ``spectral gap times mixing time" criteria (see e.g. \cite{Peres04}) for this class, we will build upon the result of \cite{Fill} to first analyze the fastest strong stationary time of this class, followed by demonstrating that the proof in \cite{DSC06} carries over for this class of non-reversible chains.

To this end, we recall the definition of separation distance of Markov chains, which is used as a standard measure for convergence to equilibrium.
For $n \in \N$, the maximum separation distance $s(n)$ is defined by
$$s(n) = \max_{x,y \in E} \left[ 1 - \dfrac{P^n(x,y)}{\pi(y)}\right] = \max_{x \in E}\, \mathrm{sep}(P^n(x,\cdot),\pi) = \max_{x \in E} s_x(n).$$
One of its nice features is its connection to  strong stationary times that we now describe. We say that a randomized stopping time $T$ for a Markov chain $X$ with stationary distribution $\pi$ is a strong stationary time $T$, possibly depending on the initial starting position $x$, if, for all $x,y \in E$,
$$\mP_x(T = n, X_T = y) = \mP_x(T = n) \pi(y).$$
It is well-known, see e.g. \cite[Lemma $6.11$]{LPW09}, that the tail probability of a strong stationary time $U$ provides an upper bound on the separation distance, that is,
$$s_x(n) \leq \mP(U > n).$$
The fastest strong stationary time $T$ is a strong stationary time such that for all $n \in \N$, $s_x(n) = \mP(T > n)$.
We now provide a description of the cutoff phenomenon for Markov chains. Recall that the separation mixing times are defined, for any $x \in E$ and $\epsilon>0$, as \[T^s(x,\epsilon) = \min \{ n \geq 0;~\mathrm{sep}(P^n(x,\cdot),\pi) \leq \epsilon\}\]
and	\[T^s(\epsilon) = \min \{ n \geq 0;~s(n) \leq \epsilon\}.\]
A family, indexed by $n \in \N$, of ergodic chains $X^{(n)}$ defined on $\mathcal{X}_n = \llbracket 0,\r_n \rrbracket$ with transition matrix $P_n$, stationary distribution $\pi_n$ and separation mixing times ${\rm{T}}_n(\epsilon)=T^s_n(\epsilon)$ or $T^s_n(x,\epsilon)$, for some $x\in E$, is said to present a separation cutoff if there is a positive sequence  $(t_n)$ such that for all $\epsilon \in (0,1)$,
$$\lim_{n \to \infty} \dfrac{\rm{T}_n(\epsilon)}{t_n} = 1.$$
The family has a $(t_n, b_n)$ separation cutoff if the sequences $(t_n)$ and $(b_n)$ are positive, $b_n/t_n \to 0$ and
for all $\epsilon \in (0,1)$,
$$\limsup_{n \to \infty} \dfrac{|{\rm{T}}_n(\epsilon) - t_n|}{b_n} < \infty.$$

We now proceed to discuss the main results of this Section, with Theorem \ref{thm:sepcut} addressing the case of discrete time family of Markov chains and Theorem \ref{thm:sepcutcont} discussing the continuized version. Recall that the notation $\GMc^+$ introduced in Definition \ref{def:GMcclass} represents the generalized monotonicity class with \textit{non-negative} eigenvalues. This is an important subclass since the eigenvalues of the transition kernel (resp.~negative of the generator) are the parameters in the geometric distribution (resp.~exponential distribution) of the fastest strong stationary time in Theorem \ref{thm:sepcut} (resp.~Theorem \ref{thm:sepcutcont}).

\begin{theorem}\label{thm:sepcut}
	For $n \geq 1$, suppose that $P_n \in \GMc^+_{\r_n}$ on the state space $\mathcal{X}_n = \llbracket 0,\r_n \rrbracket$ that started at $0$. Let $(\theta_{n,i})_{i=1}^{\r_n}$ be the non-zero eigenvalues of $I - P_n$, and $(c_{n,i})_{i=0}^{\r_n}$ to be the mixture weights of the $n^{\mathrm{th}}$ chain defined in \eqref{eq:mixweights} in Lemma \ref{lem:fsstgen}. Define
	$$ w_{n,i} := \sum_{j \geq i}^{\r_n} c_{n,j} \,, \quad  t_n := \sum_{i=1}^{\r_n} \dfrac{w_{n,i}}{\theta_{n,i}}\,, \quad \underline{\theta}_n := \min_{1 \leq i \leq \r_n} \theta_{n,i}\,, \quad \rho_n^2 := \sum_{i=1}^{\r_n} w_{n,i}^2 \dfrac{1-\theta_{n,i}}{\theta_{n,i}^2}\,.$$
	Then this family has a separation cutoff if and only if $t_n \underline{\theta}_n \to \infty$. Furthermore, if $t_n \underline{\theta}_n \to \infty$, then there is a $(t_n, \max\{\rho_n,1\})$ separation cutoff.
\end{theorem}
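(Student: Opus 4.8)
The plan is to transfer the analysis of the separation profile $s_n$ to the fastest strong stationary time $T_n$ of the chain $P_n\in\GMc^+_{\r_n}$ started at $0$, whose tail \emph{is} that profile, $s_n(m)=\mathbb{P}(T_n>m)$ for $m\in\mathbb{N}_0$, and then to run a Chebyshev concentration argument in the spirit of \cite{DSC06}. First I would record the distributional description of $T_n$ furnished by Lemma \ref{lem:fsstgen}: it rests on Theorem \ref{thm:Mc} (a state-restricted, Doob-transformed version of $P_n$ is similar to an irreducible \emph{birth--death} kernel whose eigenvalues $1-\theta_{n,i}$ are real and lie in $[0,1)$, non-negativity being exactly what the lazy Siegmund dual condition (6) guarantees) together with Fill's description of fastest strong stationary times \cite{Fill,FD}, and it realises $T_n$ as a mixture, with weights $(c_{n,i})_{i=0}^{\r_n}$, of independent \emph{sums of geometric} random variables with parameters drawn from $\{\theta_{n,i}\}_{i=1}^{\r_n}$. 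From this representation one computes $\mathbb{E}[T_n]=\sum_{i=1}^{\r_n}w_{n,i}/\theta_{n,i}=t_n$ and $\mathrm{Var}(T_n)\le C\rho_n^2$ for a universal $C$; the only arithmetic needed afterwards is the elementary estimate
$$\rho_n^2=\sum_{i=1}^{\r_n}w_{n,i}^2\,\frac{1-\theta_{n,i}}{\theta_{n,i}^2}\;\le\;\frac{1}{\underline{\theta}_n}\sum_{i=1}^{\r_n}\frac{w_{n,i}}{\theta_{n,i}}\;=\;\frac{t_n}{\underline{\theta}_n},$$
valid since $0\le w_{n,i}\le 1$ and $0<\theta_{n,i}\le 1$, so that $(\rho_n/t_n)^2\le(t_n\underline{\theta}_n)^{-1}$ and $t_n\ge t_n\underline{\theta}_n$.

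\emph{Sufficiency.} If $t_n\underline{\theta}_n\to\infty$ then $t_n\to\infty$ and $\max\{\rho_n,1\}/t_n\to 0$, and Chebyshev's inequality applied to $T_n$ gives $\mathbb{P}\big(|T_n-t_n|\ge A\max\{\rho_n,1\}\big)\le C/A^2$. Since $s_n(m)=\mathbb{P}(T_n>m)$ is non-increasing in $m$, for each $\epsilon\in(0,1)$ a choice $A=A(\epsilon)$ large enough forces $s_n\big(\lceil t_n+A\max\{\rho_n,1\}\rceil\big)\le\epsilon$ and $s_n\big(\lfloor t_n-A\max\{\rho_n,1\}\rfloor\big)>\epsilon$ for all large $n$, i.e.\ $|T^s_n(\epsilon)-t_n|\le A(\epsilon)\max\{\rho_n,1\}$ eventually. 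This is precisely a $(t_n,\max\{\rho_n,1\})$ separation cutoff, and a fortiori $T^s_n(\epsilon)/t_n\to 1$ for every $\epsilon$.

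\emph{Necessity.} Suppose the family has a separation cutoff with respect to some sequence $(b_n)$, which we may take to tend to $\infty$. From $t_n=\sum_{m\ge 0}s_n(m)$ and the fact that a cutoff forces $s_n(\lceil b_n/2\rceil)\to 1$, one gets $t_n\ge\tfrac12 b_n(1-o(1))$, hence $b_n\le C't_n$. Assume, for a contradiction, that $t_n\underline{\theta}_n\not\to\infty$; then along a subsequence $\underline{\theta}_n\le C/t_n$. Let $i^\star$ realise $\underline{\theta}_n=\theta_{n,i^\star}$; conditioning on the corresponding geometric summand being present (an event of probability $w_{n,i^\star}$) and on its exceeding $\alpha t_n$ yields, for each fixed $\alpha>0$,
$$s_n(\lceil\alpha t_n\rceil)=\mathbb{P}(T_n>\alpha t_n)\;\ge\;w_{n,i^\star}\,(1-C/t_n)^{\alpha t_n+1}\;\longrightarrow\;w_\infty\,e^{-\alpha C}$$
along a further subsequence on which $w_{n,i^\star}\to w_\infty$. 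If $w_\infty>0$, take $\alpha=2C'+1$ so that $\alpha t_n>2b_n$ eventually; then $s_n(\lceil\alpha t_n\rceil)\le s_n(\lceil 2b_n\rceil)\to 0$ by the cutoff, contradicting $s_n(\lceil\alpha t_n\rceil)\to w_\infty e^{-\alpha C}>0$. If instead $w_{n,i^\star}\to 0$, the slow mode is activated too rarely to be seen directly, and one peels it off: writing $T_n=\mathds{1}_E\,G^\star+R_n$ with $\mathbb{P}(E)=w_{n,i^\star}\to 0$, the remainder $R_n$ is again a mixture of sums of geometrics with $\mathbb{E}[R_n]=t_n-w_{n,i^\star}/\underline{\theta}_n$, and one re-runs the dichotomy on $R_n$ with the next relevant parameter; since the total mean budget $\sum_i w_{n,i}/\theta_{n,i}=t_n$ is finite, a geometric summand carrying a fixed fraction of $t_n$ with presence probability bounded away from $0$ must eventually be isolated, returning to the previous contradiction. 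Hence a separation cutoff forces $t_n\underline{\theta}_n\to\infty$.

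The hard part is the necessity direction, and specifically the case $w_{n,i^\star}\to 0$: unlike the reversible birth--death setting of \cite{DSC06}, where $T_n$ is a genuine non-random sum and the slowest geometric summand is always present, here the mixture weights $(c_{n,i})$ from Lemma \ref{lem:fsstgen} may place the bulk of the mean $t_n$ on a summand activated with vanishing probability; the crux is the bookkeeping that, after the peeling above, locates a geometric summand of mean comparable to $t_n$ whose presence probability stays bounded below. The sufficiency direction and the window estimate, by contrast, are a soft consequence of the second-moment bound $\mathrm{Var}(T_n)\le C\rho_n^2$ together with the elementary inequality $\rho_n^2\le t_n/\underline{\theta}_n$ displayed above.
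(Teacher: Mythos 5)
Your sufficiency argument is essentially the paper's: invoke Lemma \ref{lem:fsstgen} to realize the fastest strong stationary time $T_n$ as the $\mathbf{c}$-mixture of convolutions of geometrics, compute $\E(T_n)=t_n$, establish the key inequality $\rho_n^2\le t_n/\underline{\theta}_n$ (your derivation is the same one-line estimate the paper gives), and run the two-sided Chebyshev argument of \cite{DSC06}. One caveat even there: your bound $\mathrm{Var}(T_n)\le C\rho_n^2$ is asserted, not proved, and for a genuine mixture it is not automatic, since
$\mathrm{Var}(T_n)=\sum_i w_{n,i}\,\frac{1-\theta_{n,i}}{\theta_{n,i}^2}+\mathrm{Var}\bigl(\E[T_n\mid K]\bigr)$,
whose first term alone exceeds $\rho_n^2$ by a factor of order $1/w_{n,i}$ when some weights are small; the paper does not re-derive this bookkeeping either, but it explicitly defers the entire cutoff dichotomy to the proof of \cite[Theorem $8.1$]{Choi-Patie}, which is where that part of the argument lives.

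The genuine gap is the necessity direction, in exactly the case you flag, $w_{n,i^\star}\to 0$. The ``peeling'' step is not a proof: its pivotal claim --- that after removing the slow summand one must eventually isolate a geometric summand carrying a fixed fraction of $t_n$ whose presence probability stays bounded away from zero --- does not follow from anything you establish, and as a statement about arbitrary mixtures of convolutions of geometrics it is false. The remaining mean budget $t_n$ can be spread over many fast summands each contributing $o(t_n)$, in which case $T_n$ concentrates around $t_n$ and the separation profile exhibits a cutoff even though $t_n\underline{\theta}_n$ stays bounded; so no argument that uses only the bare mixture representation can yield the ``only if'' implication. To close this one needs additional input: either structural information on the weights $(c_{n,i})$ produced by the $\GMc^+$ class through Theorem \ref{thm:Mc} and Fill's construction \cite{Fill}, or the detailed argument of \cite[Theorem $8.1$]{Choi-Patie} following \cite{DSC06}, which is precisely what the paper cites instead of re-deriving. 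A further, more minor, inaccuracy in the same step: writing $T_n=\1_E\,G^\star+R_n$ with $E$ independent of $R_n$ misrepresents the mixture, since the summands present are nested (mixture index $k$ activates the geometrics with parameters $\theta_{n,1},\dots,\theta_{n,k}$), so presence events are strongly dependent; this is repairable, but the peeling claim itself is not.
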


\begin{rk}
	For discrete-time stochastically monotone birth-death chains which start at $0$, we have $w_i = 1$ for $i \in \llbracket 1,\r_n\rrbracket$ and $c_{n,0} = 0$, and hence we recover \cite[Theorem $5.2$]{DSC06}.
\end{rk}

\begin{theorem}\label{thm:sepcutcont}
	For $n \geq 1$, suppose that $L_n = P_n - I$ is the infinitesimal generator with $P \in \GMc^+_{\r_n}$ on the state space $\mathcal{X}_n = \llbracket 0,\r_n \rrbracket$ that started at $0$. Let $(\theta_{n,i})_{i=1}^{\r_n}$ be the non-zero eigenvalues of $-L_n$, and $(c_{n,i})_{i=0}^{\r_n}$ to be the mixture weights defined in \eqref{eq:mixweightscont} in Remark \ref{rk:fsstgencont}. Define
	$$ w_{n,i} := \sum_{j \geq i}^{\r_n} c_{n,j} \,, \quad  t_n := \sum_{i=1}^{\r_n} \dfrac{w_{n,i}}{\theta_{n,i}}\,, \quad \underline{\theta}_n := \min_{1 \leq i \leq \r_n} \theta_{n,i}\,, \quad \rho_n^2 := \sum_{i=1}^{\r_n} \dfrac{w_{n,i}^2}{\theta_{n,i}^2}\,.$$
	Then this family has a separation cutoff if and only if $t_n \underline{\theta}_n \to \infty$. Furthermore, if $t_n \underline{\theta}_n \to \infty$, then there is a $(t_n, \rho_n)$ separation cutoff.
\end{theorem}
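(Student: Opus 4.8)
The plan is to follow the blueprint of the proof of Theorem~\ref{thm:sepcut} almost verbatim, replacing geometric holding times by exponential ones; the continuized setting is in fact slightly cleaner, which is why the variance factor $1-\theta_{n,i}$ disappears from $\rho_n^2$ (the variance of an $\mathrm{Exp}(\theta)$ variable being $\theta^{-2}$ rather than $(1-\theta)\theta^{-2}$) and the window shrinks from $\max\{\rho_n,1\}$ to $\rho_n$ (there is no lattice effect). First I would invoke the continuous-time fastest strong stationary time of Remark~\ref{rk:fsstgencont} — itself a consequence of the intertwining of $\widehat{P}_n$ with a birth--death chain in Theorem~\ref{thm:Mc}, of its continuized version, and of Fill's construction \cite{Fill} — to write, for $P_n\in\GMc^+_{\r_n}$ started at $0$,
\[ s_{n,0}(t)=\mP(T_n>t),\]
where $T_n$ is a sum of independent exponential random variables whose rates are assembled from the non-zero eigenvalues $(\theta_{n,i})_{i=1}^{\r_n}$ of $-L_n$ and the weights $w_{n,i}=\sum_{j\geq i}c_{n,j}$, and where a direct computation yields $\E[T_n]=t_n$ and $\mathrm{Var}(T_n)=\rho_n^2$ (if the extremal starting state is needed, $0$ is worst by stochastic monotonicity, so $s_n(t)=s_{n,0}(t)$). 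The hypothesis $P_n\in\GMc^+$, rather than merely $\GMc$, enters precisely here: the ``lazy Siegmund dual'' condition~(6) forces the relevant dual to be an honest (sub-)Markov chain, so that the $\theta_{n,i}$ are nonnegative and may legitimately serve as exponential rates.

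With this representation the statement becomes a purely probabilistic concentration question for sums of independent exponentials: such a $T_n$ concentrates around its mean $t_n$ at scale $\rho_n=\sqrt{\mathrm{Var}(T_n)}$ if and only if no single summand carries a non-vanishing fraction of the mean, which is exactly $t_n\underline\theta_n\to\infty$ (the heaviest summand has mean of order $\underline\theta_n^{-1}$, while $t_n\geq c\,\underline\theta_n^{-1}$ always). For the ``if'' direction one establishes a $(t_n,\rho_n)$ separation cutoff: from $\rho_n^2=\sum_i w_{n,i}^2\theta_{n,i}^{-2}\leq\underline\theta_n^{-1}\sum_i w_{n,i}\theta_{n,i}^{-1}=t_n/\underline\theta_n$ one gets $\rho_n/t_n\leq(t_n\underline\theta_n)^{-1/2}\to0$, so the window is $o(t_n)$; the uniform two-sided tail bounds $\mP(T_n>t_n+\gamma\rho_n)\leq\epsilon$ and $\mP(T_n<t_n-\gamma\rho_n)\geq 1-\epsilon$ for $\gamma$ large then follow from the exponential Markov inequality applied to the explicit Laplace/moment generating function of a sum of independent exponentials, exactly as in \cite{DSC06} with exponentials in place of geometrics, and transferring these tails to $T^s_n(\epsilon)$ via $s_{n,0}(t)=\mP(T_n>t)$ closes this direction. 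For the ``only if'' direction one argues contrapositively: if $t_n\underline\theta_n\not\to\infty$, then along a subsequence the summand of smallest rate has mean comparable to $t_n$, and since an exponential variable does not concentrate, $\liminf_n\mP(T_n>(1+\delta)t_n)>0$ and $\liminf_n\mP(T_n<(1-\delta)t_n)>0$ for some $\delta>0$, which precludes $T^s_n(\epsilon)/t_n\to1$ and hence any separation cutoff.

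The two delicate points, where I expect to spend the most effort, are the following. First, pinning down the continuized fastest strong stationary time with \emph{exactly} the parameters $(\theta_{n,i},w_{n,i})$ in the statement: this means carrying the Doob $H$- and $\widetilde h$-transform construction behind Theorem~\ref{thm:Mc} through to the continuized birth--death dual and verifying that its absorption time from the appropriate initial law is the claimed exponential sum, together with the nonnegativity of the $\theta_{n,i}$ that condition~(6) of $\GMc^+$ supplies. Second, obtaining the \emph{sharp} window $\rho_n$ rather than a crude $o(t_n)$ window requires the precise exponential-sum tail estimates, and in the ``only if'' direction one must ensure that the dominant summand actually appears in $T_n$ with weight bounded away from $0$; as in \cite{DSC06} this is handled by restricting to the indices $i$ with $w_{n,i}$ bounded below (note $w_{n,1}=1-c_{n,0}$) and running the anticoncentration argument there. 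Everything else is a routine transcription of the discrete-time proof of Theorem~\ref{thm:sepcut}.
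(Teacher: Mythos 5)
Your proposal follows essentially the same route as the paper, which deliberately omits this proof as a transcription of Theorem \ref{thm:sepcut}: use the continuized fastest strong stationary time representation of Remark \ref{rk:fsstgencont} (built on Theorem \ref{thm:Mc} and Fill's construction, with $\GMc^+$ ensuring the eigenvalues are admissible rates and the mixture weights nonnegative), then run the Chebyshev-type concentration framework of \cite{DSC06} with the key bound $\rho_n^2 \leq t_n/\underline{\theta}_n$. Your added remarks on the exponential tail estimates and the anticoncentration step in the ``only if'' direction are consistent with the argument the paper delegates to \cite{DSC06} and \cite{Choi-Patie}, so no correction is needed.
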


We will only prove Theorem \ref{thm:sepcut} as the proof of Theorem \ref{thm:sepcutcont} is very similar and thus omitted.

\subsection{Proof of Theorem \ref{thm:Mc}}\label{subsec:proofgmc}
We write  $\widetilde{P}$ the so-called Siegmund dual (or $H_S$-dual) of $\widehat{P}$. That is, $\widetilde{P}^T = H_S^{-1} \widehat{P} H_S$ where  $H_S = (H_S(x,y))_{x,y \in \mathcal{X}}$ is defined to be
$H_S(x,y) = \1_{\{x \leq y\}}$
and its inverse $H_S^{-1}=(H^{-1}_S(x,y))_{x,y \in \mathcal{X}}$ is
$H_S^{-1}(x,y) = \1_{\{x = y\}} - \1_{\{x = y - 1\}}$, see \cite{Sieg76}. Since $X \in \GMc$, then $\widehat{P}$ is stochastically monotone, hence from \cite[Proposition 4.1]{ASM03}, we have that $\widetilde{P}$ is a sub-Markovian kernel. For $x \in \llbracket 0,\r-2 \rrbracket$, condition $2$ and $3$ in $\GMc$ yield, respectively, $\widetilde{p}(x,x+1) > 0$, while for $x \in \llbracket 1,\r-1 \rrbracket$, we have $\widetilde{p}(x,x-1) > 0$. Condition $4$ and $5$ in $\GMc$ guarantee that $\widetilde{p}(x,y) = 0$ for each $x \in \llbracket 0,\r-3 \rrbracket$ and $y \in \llbracket x+2,\r-1 \rrbracket$ and for each $x \in \llbracket 2,\r-1 \rrbracket$ and $y \in \llbracket 0,x-2 \rrbracket$.
That is, $\widetilde{P}$ is a (strictly substochastic) irreducible birth-death chain when restricted to the state space $\llbracket 0,\r-1 \rrbracket$. Denote $\widetilde{P}^{bd}$  the restriction of $\widetilde{P}$ to $\llbracket 0,\r-1 \rrbracket$. By breaking off the last row and last column of $\widetilde{P}$, we can write
\begin{align}\label{eq:hatPbd}
\widetilde{P} = \left( \begin{array}{cc}
\widetilde{P}^{bd} & \mathbf{v}  \\
\mathbf{0} & 1  \\
\end{array} \right) = (H_S^{-1} \widehat{P} H_S)^T ,
\end{align}
where $\mathbf{0}$ is a row vector of zero, and $\mathbf{v}$ is a column vector storing $\widetilde{p}(x,\r)$ for $x \in \llbracket 0,\r-1 \rrbracket$.
Considering the $h$-transform of $\widetilde{P}$ with $h = H_S^T \pi > \mathbf{0}$, see e.g.~\cite[Theorem $2$]{HM11}, we see that
$$M = P \Lambda = \Lambda \widetilde{P},$$
where $\Lambda = (H_S^T D_{\pi})^{-1}$ ($D_{\pi}$ is the diagonal matrix of $\pi$). Observing that the last row of $\widetilde{P}$ is zero except the last entry, we have
$$M^{\llbracket 0,\r-1 \rrbracket} = \Lambda^{\llbracket 0,\r-1 \rrbracket} \widetilde{P}^{bd}.$$
Note that $\widetilde{P}^{bd}$ is a strictly substochastic matrix with $\r$ as a killing boundary. Denote $\widetilde{T}^{bd}$ to be the lifetime of Markov chain with transition kernel $\widetilde{P}^{bd}$. However,  defining, with the obvious notation, for any $x \in \llbracket 0,\r-1 \rrbracket$,
\begin{align}\label{eq:widetildeh}
	\widetilde h(x) = \mathbb{P}_x(\widetilde{T}^{bd}_{\r -1} < \widetilde{T}^{bd}),
\end{align}
 we have, according to \cite[Theorem 3.1]{Choi-Patie}, that $\widetilde h$ is an harmonic function for $\widetilde{P}^{bd}$, i.e.~$\widetilde{P}^{bd} \widetilde h =\widetilde h$.  Hence, a standard result in Martin boundary theory, see e.g.~\cite[Theorem 2.2]{Choi-Patie}, entails that the Markov chain with  transition kernel $Q$, defined on $\llbracket 0,\r-1 \rrbracket\times \llbracket 0,\r-1 \rrbracket$ by  $Q(x,y) = \frac{\widetilde h(y)}{\widetilde h(x)}\widetilde{P}^{bd}(x,y)$, is an ergodic birth-death chain, which completes the proof.
\begin{rk}\label{rk:gmcplus}
	Note that condition $(6)$ in $\GMc^+$ guarantees that $\widetilde{P}$ is a lazy chain, that is $\widetilde{P}(x,x) \geq 1/2$ for all $x \in \mathcal{X}$, and hence the class $\GMc^+$ possesses non-negative eigenvalues.
\end{rk}

\subsection{Proof of Theorem \ref{thm:sepcut}}
	
Following the plan as outlined above in Section \ref{sec:sepcut}, we first analyze the distribution of the fastest strong stationary time of the class $\GMc^+$ in Lemma \ref{lem:fsstgen}, followed by detailing the proof of Theorem \ref{thm:sepcut}.

\begin{lemma}\label{lem:fsstgen}
	Suppose that $X$ is an ergodic Markov chain on the state space $\mathcal{X} = \llbracket 0,\r \rrbracket$ (and $\r \geq 3$) with transition matrix $P$ and stationary distribution $\pi$ which starts at $0$. If $P \in \GMc^+$, then the fastest strong stationary time is distributed as the $\mathbf{c}$-mixture of convolution of geometric $\sum_{k=1}^\r c_k \mathcal{G}(\lambda_1,\ldots, \lambda_k)$, where $i,j,k \in \llbracket 0,\r \rrbracket,$
	\begin{align}\label{eq:mixweights}
	Q_k := \dfrac{(P - \lambda_1 I)\ldots(P - \lambda_{k}I)}{(1-\lambda_1) \ldots (1-\lambda_{k})}\,, \quad \Gamma(i,j) := Q_i(0,j) \,, \quad c_k := \dfrac{\Gamma(k,\r) - \Gamma(k-1,\r)}{\pi(\r)}\,,
	\end{align}
	$\{\lambda_k\}_{k=1}^\r$ are the non-unit eigenvalues of $P$ in non-decreasing order and $\mathcal{G}(\lambda_1,\ldots, \lambda_k)$ is the convolution of geometric distributions with success probabilities $1-\lambda_1,\ldots,1-\lambda_k$ respectively.
\end{lemma}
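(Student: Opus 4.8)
The plan is to leverage the structure uncovered in Theorem~\ref{thm:Mc}, namely that $X \in \GMc^+$ gives rise to a lazy, irreducible, substochastic birth–death chain $\widetilde{P}^{bd}$ on $\llbracket 0,\r-1\rrbracket$ with killing at an absorbing point, and that the state-restricted and $h$-transformed kernel $Q$ is an ergodic birth–death chain whose eigenvalues are exactly the non-unit eigenvalues $\{\lambda_k\}_{k=1}^{\r}$ of $P$. First I would recall the classical Diaconis–Fill strong stationary duality picture for stochastically monotone chains: the fastest strong stationary time of $X$ equals the absorption time of the Siegmund-type dual started appropriately. Concretely, since $X$ starts at $0$, the relevant dual process is $\widetilde{P}$ started from (a distribution related to) $\pi_0/\pi = \delta_0/\pi$; because condition (1)–(5) force $\widetilde{P}$ to be birth–death with absorption only at state $\r$, the fastest strong stationary time $T$ is distributed as the hitting time of $\{\r\}$ by the dual birth–death chain.

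Next I would pass from the absorption time of $\widetilde{P}$ to a mixture of convolutions of geometrics. The key algebraic input is the spectral decomposition of a finite birth–death (hence tridiagonal, diagonalizable with real simple spectrum) kernel: if $P$ has non-unit eigenvalues $\lambda_1 \le \cdots \le \lambda_{\r}$, then the operators $Q_k = \prod_{j\le k}(P-\lambda_j I)/(1-\lambda_j)$ form the natural "partial-product" projections, and $\Gamma(i,j) = Q_i(0,j)$ tracks how mass accumulates at the absorbing state $\r$ as successive spectral factors are applied. Using $\GMc^+$ (condition (6)) to guarantee $\widetilde{P}(x,x)\ge 1/2$, i.e.\ all $\lambda_k \in [0,1)$ (Remark~\ref{rk:gmcplus}), each geometric with success probability $1-\lambda_k$ is a genuine probability distribution, so the convolutions $\mathcal{G}(\lambda_1,\ldots,\lambda_k)$ make sense. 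One then checks that $c_k = (\Gamma(k,\r)-\Gamma(k-1,\r))/\pi(\r)$ are nonnegative and sum to one — this is where the monotonicity in $k$ of $\Gamma(k,\r)$ (itself a consequence of the birth–death structure and the ordering of eigenvalues) is used — and that the probability generating function of $T$ factors as $\sum_{k=1}^{\r} c_k \prod_{j=1}^{k}\frac{(1-\lambda_j)z}{1-\lambda_j z}$, which is exactly the pgf of the claimed $\mathbf{c}$-mixture.

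For the identification of $T$ with this mixture I would either invoke the Fill–type eigenvalue representation of fastest strong stationary times for birth–death / skip-free chains (as in \cite{Fill}, and its skip-free extension used in \cite{Choi-Patie}) applied to $Q$, then transfer the law back to $X$ via the intertwining $P\Lambda = \Lambda \widetilde P$ and the duality — noting that similarity preserves eigenvalues, so the parameters appearing in the geometrics are the same whether computed from $P$ or from $Q$ — or argue directly on $\widetilde{P}^{bd}$: the absorption time of a birth–death chain with killing is a sum of independent geometrics when started from the boundary, and a mixture thereof for a general starting law, with mixture weights read off from the spectral projections evaluated at the start state $0$ and the absorbing state $\r$. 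The main obstacle I anticipate is the bookkeeping that pins down the mixture weights precisely as $c_k$ in \eqref{eq:mixweights}: one must carefully track the Doob $h$-transform (with $h = H_S^T\pi$) and the further $\widetilde h$-transform from the proof of Theorem~\ref{thm:Mc}, since these transforms rescale the start and absorbing states and hence the weights, and verify that after all transforms the net effect is exactly the normalization by $\pi(\r)$ and the telescoping difference $\Gamma(k,\r)-\Gamma(k-1,\r)$. Establishing nonnegativity of these weights — equivalently, that $k\mapsto \Gamma(k,\r)$ is nondecreasing — is the delicate monotonicity claim that the $\GMc^+$ hypotheses are designed to secure.
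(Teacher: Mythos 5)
Your overall direction is the right one: the paper also reduces the lemma to Fill's general representation theorem (\cite[Theorem 5.2]{Fill}), which already asserts that the fastest strong stationary time of a chain started at $0$ with nonnegative eigenvalues is the mixture $\sum_k c_k\mathcal{G}(\lambda_1,\ldots,\lambda_k)$ with exactly the weights \eqref{eq:mixweights}, \emph{provided} the $c_k$ are nonnegative. But that proviso is the entire content of the lemma, and your proposal never actually proves it. You assert that the monotonicity of $k\mapsto\Gamma(k,\r)$ is ``itself a consequence of the birth--death structure and the ordering of eigenvalues,'' and later concede it is ``the delicate monotonicity claim'' --- this is precisely the missing idea. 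The paper's mechanism is the Micchelli--Willoughby theorem (\cite[Theorem 3.2]{MW79}) on spectral polynomials of birth--death (Stieltjes-type) kernels: writing $\widetilde{P}$ for the Siegmund dual of $\widehat{P}$ (block upper triangular with absorbing state $\r$ and birth--death block $\widetilde{P}^{bd}$ on $\llbracket 0,\r-1\rrbracket$), one shows by induction on $k$, applying \cite[Theorem 3.2]{MW79} to the principal block at each step, that $\prod_{i=1}^{k}(\widetilde{P}-\lambda_i I)$ is entrywise nonnegative; then the conjugation $P=D_\pi^{-1}(H_S^T)^{-1}\widetilde{P}H_S^T D_\pi$ converts this into $\Gamma(k,\r)-\Gamma(k-1,\r)\geq 0$, since the difference equals $\bigl(\prod_{i=1}^{k-1}\tfrac{\widetilde{P}^{bd}-\lambda_i I}{1-\lambda_i}\mathbf{h}\bigr)(0)$ with $\mathbf{h}\geq\mathbf{0}$ the column of $\widetilde{P}$ into the absorbing state. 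Without this (or an equivalent) argument, nonnegativity of the $c_k$ is simply unproved; it is false for general chains, which is why Fill's theorem carries the hypothesis and why the $\GMc^+$ structure must be exploited quantitatively, not just invoked.

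Your first route (identify the fastest strong stationary time with the absorption time of the Diaconis--Fill dual and read off the weights spectrally) is also riskier than you suggest. Started deterministically at $0$, the absorption time of an absorbing birth--death dual is a plain convolution of geometrics, and more to the point the paper's own remark following the lemma observes that the absorption time of the associated birth--death chain $Q$ started at $0$ is a mixture $\sum_k a_k\mathcal{G}(\lambda_1,\ldots,\lambda_k)$ with weights $a_k$ that are \emph{a priori different} from the $c_k$; so transferring a law from the dual and ``reading off'' the weights does not obviously reproduce \eqref{eq:mixweights}, and sharpness of the dual would also need justification. The clean path is the paper's: cite Fill's theorem for the form of the law and devote the proof to $c_k\geq 0$ via \cite{MW79}.
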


\begin{rk}
	We alert the readers that $Q_k$ are the so-called spectral polynomials and $\Gamma(i,j)$ is $\Lambda(i,j)$ of \cite[Theorem $5.2$]{Fill} (since $\Lambda$ is used as the link kernel throughout this paper).
\end{rk}

\begin{rk}[On the fastest strong stationary time of $P$ and the absorption time of $Q$]
	In this remark, we would like to highlight the connection between the fastest strong stationary time $T$ of $P$ and the absorption time to $\r$ of $Q$.	According to Lemma \ref{lem:fsstgen}, if $P \in \GMc^+$, then $T$ is distributed as $\sum_{k=1}^\r c_k \mathcal{G}(\lambda_1,\ldots, \lambda_k)$. On the other hand, according to Theorem \ref{thm:Mc}, $Q$ is an irreducible birth-death process on $\llbracket 0,\tau-1 \rrbracket$, then using \cite[Theorem 1.1]{Miclo} there exists a probability measure such that the absorption time to $\tau$ of $Q$ starting from $0$ is distributed as, in our notations, $\sum_{k=1}^\r a_k \mathcal{G}(\lambda_1,\ldots, \lambda_k)$. In these two distributions, the same eigenvalues appear as parameters in the geometric distributions. See \cite{Miclo} for further connections with the class of phase-type distribution.
\end{rk}

\begin{proof}
	Suppose that $P \Lambda = \Lambda Q$.
	In view of \cite{Fill} Theorem $5.2$, it suffices to show that the $c_k \geq 0$. First, we show that $(Q - \lambda_1 I)\ldots(Q - \lambda_k I)$ are non-negative matrices, where $Q$ is the Siegmund dual of $\widehat{P}$. We will prove this via induction on $k$. For $k = 1$, thanks to \cite[Theorem $3.2$]{MW79}, we have $Q^{BD} - \lambda_1 I \geq \mathbf{0}$, where $Q^{BD} := Q^{\llbracket 0,\r-1 \rrbracket}$ is the restriction of $Q$ except the last row and column, which leads to
	\begin{align*}
	Q - \lambda_1 I = \left( \begin{array}{cc}
	Q^{BD} - \lambda_1 I & \mathbf{h}  \\
	\mathbf{0}^T & 1 - \lambda_1  \\
	\end{array} \right) \geq \mathbf{0}\,.
	\end{align*}
	Suppose that
	\begin{align*}
	\prod_{i=1}^k (Q - \lambda_i I) = \left( \begin{array}{cc}
	\prod_{i=1}^k (Q^{BD} - \lambda_i I) & \mathbf{n}  \\
	\mathbf{0}^T & \prod_{i=1}^k (1 - \lambda_i)  \\
	\end{array} \right) \geq \mathbf{0}\,,
	\end{align*}
	where $\mathbf{n} \geq \mathbf{0}$ is a non-negative vector. Therefore,
	\begin{align*}
	\prod_{i=1}^{k+1} (Q - \lambda_i I) = \left( \begin{array}{cc}
	\prod_{i=1}^{k+1} (Q^{BD} - \lambda_i I) & \prod_{i=1}^{k} (Q^{BD} - \lambda_i I)\mathbf{h} + (1-\lambda_{k+1})\mathbf{n}  \\
	\mathbf{0}^T & \prod_{i=1}^{k+1} (1 - \lambda_i)  \\
	\end{array} \right) \geq \mathbf{0}\,,
	\end{align*}
	which completes the induction by using \cite[Theorem $3.2$]{MW79} again on $\prod_{i=1}^{k+1} (Q^{BD} - \lambda_i I)$. Define $$Z_k :=  (H_S^T)^{-1} \prod_{i=1}^k \dfrac{Q - \lambda_i I}{1-\lambda_i} H_S^T\,.$$ Note that
	$P = D_{\pi}^{-1} (H_S^T)^{-1} Q H_S^T D_{\pi}$, so
	$c_k \geq 0$ if and only if
	$ Z_k(0,\r) - Z_{k-1}(0,\r) \geq 0$ if and only if (here we make use of $H_S^T$)
	$$  \left(\prod_{i=1}^k \dfrac{Q - \lambda_i I}{1-\lambda_i}\right)(0,\r) - \left(\prod_{i=1}^{k-1} \dfrac{Q - \lambda_i I}{1-\lambda_i} \right) (0,\r) = \left(\prod_{i=1}^{k-1} \dfrac{Q^{BD} - \lambda_i I}{1-\lambda_i}\mathbf{h}\right)(0) \geq 0\,,$$
	which is true.
\end{proof}
When we have a handle on the fastest strong stationary time, we can then analyze the separation cutoff phenomenon, and the rest of the proof follow the Chebyshev inequality framework introduced by \cite{DSC06}. More precisely, denote $P_n^k$ to be the distribution of the $n^{\mathrm{th}}$ chain at time $k$, $\pi_n$ to be the stationary measure and $T_n$ to be the fastest strong stationary time of the $n^{\mathrm{th}}$ chain. We note that $\E(T_n) = t_n$ and $\mathrm{Var}(T_n) = \rho_n^2$. The key to the proof is the following:
	\begin{align*}
	\rho_n^2 = \underline{\theta}_n^{-2} \sum_{i=1}^{\r_n} w_{n,i}^2 \dfrac{\left(1-\theta_{n,i}\right) \underline{\theta}_n^2}{\theta_{n,i}^2} \leq \underline{\theta}_n^{-2}\sum_{i=1}^{\r_n} w_{n,i} \dfrac{\underline{\theta}_n}{\theta_{n,i}} = \underline{\theta}_n^{-1} t_n\,,
	\end{align*}
	where we use $\theta_{n,i} \geq 0$, $\underline{\theta}_n/\theta_{n,i} \leq 1$ and $w_i \leq 1$ in the first inequality. The rest of the proof follows as that of \cite[Theorem $8.1$]{Choi-Patie}, which does not require reversibility of the chain.	
	
\begin{rk}\label{rk:fsstgencont}
	The corresponding result of Lemma \ref{lem:fsstgen} in the continuous-time setting is stated in the following in order to prove Theorem \ref{thm:sepcutcont}. Suppose that $X$ is a continuous-time ergodic Markov chain on the state space $\mathcal{X} = \llbracket 0,\r \rrbracket$ (and $\r \geq 3$) with generator $L = P - I$ and stationary distribution $\pi$ which starts at $0$. If $P \in \GMc^+$, then the fastest strong stationary time is distributed as the $\mathbf{c}$-mixture of convolution of exponential $\sum_{k=1}^\r c_k \mathcal{E}(\theta_1,\ldots, \theta_k)$, where $i,j,k \in \llbracket 0,\r \rrbracket,$
		\begin{align}\label{eq:mixweightscont}
		Q_k := \dfrac{(L + \theta_1 I)\ldots(L + \theta_{k}I)}{\theta_1 \ldots \theta_{k}}\,, \quad \Gamma(i,j) := Q_i(0,j) \,, \quad c_k := \dfrac{\Gamma(k,\r) - \Gamma(k-1,\r)}{\pi(\r)}\,,
		\end{align}
		and $\{\theta_k\}_{k=1}^\r$ are the non-zero eigenvalues of $-L$ in non-increasing order and $\mathcal{E}(\theta_1,\ldots, \theta_k)$ is the convolution of exponential distributions with mean $1/\theta_1,\ldots,1/\theta_k$ respectively.
	
\end{rk}

\section{${\rm{L}}^2$-cutoff}\label{sec:l2cut}

The aim of this Section is to investigate the spectral criterion for the existence of ${\rm{L}}^2$-cutoff for the class of Markov chains in a continuous-time setting with generator $L$ and similarity on the generator level. That is, in the notation of Definition \ref{def:sim} and \ref{def:Sclass}, $L \in \mathcal{S}(G)$, where $G$ is a reversible generator. We denote the spectral gap $\lambda = \lambda(L)$ of $L$ by
\begin{align}\label{eq:specgap}
\lambda = \lambda(L) = \inf\{\langle -Lf,f \rangle_{\pi};\, f \in \mathrm{Dom}(L), \mathrm{real}\,\mathrm{valued}, \E_{\pi}(f) = 0, \E_{\pi}(f^2) = 1\}.
\end{align}
This follows and generalizes the work of \cite{CSC07,CSC10,CHS17} who studied the ${\rm{L}}^2$-cutoff phenomena in the context of normal Markov processes.
Adapting the notations therein, we proceed to provide a quick review on the notion of ${\rm{L}}^2$-cutoff. 

\begin{definition}\label{def:cutoff}
	For $n \geq 1$, let $g_n: [0,\infty) \mapsto [0,\infty]$ be a non-increasing function vanishing at infinity. Assume that
	$$M = \limsup_{n \to \infty} g_n(0) > 0.$$
	Then the family $\mathcal{G} = \{g_n: n \geq 1\}$ is said to have
	\begin{enumerate}
		\item A \textit{cutoff} if there exists a sequence of positive numbers $t_n$, known as the cutoff time, such that for $\epsilon \in (0,1)$,
		$$\lim_{n \to \infty} g_n((1+\epsilon)t_n) = 0, \quad \lim_{n \to \infty} g_n((1-\epsilon)t_n) = M.$$
		\item A $(t_n,b_n)$-\textit{cutoff} if $t_n > 0$, $b_n > 0$, where $b_n$ is known as the cutoff window, $b_n = o(t_n)$ and
		$$\lim_{c \to \infty} \limsup_{n \to \infty} g_n(t_n + c b_n) = 0, \quad \lim_{c \to -\infty} \liminf_{n \to \infty} g_n(t_n - c b_n) = M.$$
	\end{enumerate}
\end{definition}
If $\eta P_t \ll \pi$ with density $f(t,\eta,\cdot)$, then the chi-squared distance is given by
$$D_2(\eta,t)^2 = \int_{\mathcal{X}} |f(t,\eta,x) - 1|^2 \, \pi(dx).$$
Suppose that we have a family of measurable spaces $(\mathcal{X}_n,\mathcal{B}_n)_{n \in \mathbb{N}}$. For $n \in \mathbb{N}$, we denote $p_n(t,\eta_n,\cdot)$ defined on $(\mathcal{X}_n,\mathcal{B}_n)$ to be the transition function with initial probability law $\eta_n \ll \pi_n$ and $t \geq 0$. We denote $f_n$ to be the ${\rm{L}}^2$-density of $\eta_n$ with respect to $\pi_n$. The family $\{p_n(t,\eta_n,\cdot): t \in [0,\infty) \}$ has an ${\rm{L}}^2$-cutoff (resp.~$(t_n,b_n)$ ${\rm{L}}^2$-cutoff) if $\{g_n(t) = D_{n,2}(\eta_n,t): n \geq 1\}$ has a cutoff (resp.~$(t_n,b_n)$-cutoff) as in Definition \ref{def:cutoff}, where $D_{n,2}(\eta_n,t)$ is the chi-squared distance of the $n^{th}$ chain.

Our main result in Theorem \ref{thm:cutoff} gives the spectral criterion for ${\rm{L}}^2$-cutoff to the family of process with $L_n \in \S(G_n)$, where $G_n$ is a reversible generator. We denote the (non-self-adjoint) spectral measure of $L_n$ of the $n^{th}$ chain by $F_{n,B}$ for $B \in \mathcal{B}(\mathbb{C})$, and $H_{n,B} = F_{n,B} F_{n,B}^*$. We use the following notation: for $\delta, C > 0$ and $B \in \mathcal{B}(\mathbb{C})$, we set, for any $n \in \mathbb{N}$,
\begin{eqnarray*}
	V_n(B) &=& \langle H_{n,B} f_n, f_n \rangle_{\pi_n}, \quad t_n(\delta) = \inf \{t : D_{n,2}(\eta_n,t) \leq \delta\}, \\
	\lambda_n(C) &=& \inf \{\lambda:  V_n([\lambda_n,\lambda]) > C\}, \quad 	\tau_n(C) = \sup\bigg\{\dfrac{\log(1 +  V_n([\lambda_n,\lambda]) )}{2\lambda} : \lambda \geq \lambda_n(C)\bigg\}, \\
	\gamma_n &=& \lambda_n(C)^{-1} \textrm{ and }
	b_n = \lambda_n(C)^{-1} \log(\lambda_n(C)\tau_n(C)).
\end{eqnarray*}

\begin{theorem}\label{thm:cutoff}
	Suppose that $L_n \in \S(G_n)$ for each member in the family $\{p_n(t,\eta_n,\cdot): t \in [0,\infty) \}$, where $G_n$ is a reversible generator. If $\pi_n(f_n^2) \to \infty$, then the following are equivalent.
	\begin{enumerate}
		\item $\{p_n(t,\eta_n,\cdot): t \in [0,\infty) \}$ has an ${\rm{L}}^2$-cutoff.
		\item For some positive constants $C, \delta, \epsilon$,
		$$\lim_{n \to \infty} t_n(\delta) \lambda_n(C) = \infty, \quad \lim_{n \to \infty} \int_{[\lambda_n, \lambda_n(C)]} e^{-\epsilon \gamma t_n(\delta)} dV_n(\gamma) = 0.$$
		\item For some positive constants $C, \epsilon$,
		$$\lim_{n \to \infty} \tau_n(C) \lambda_n(C) = \infty, \quad \lim_{n \to \infty} \int_{[\lambda_n, \lambda_n(C)]} e^{-\epsilon \gamma \tau_n(C)} dV_n(\gamma) = 0.$$
	\end{enumerate}
	If $(2)$ (resp.~$(3)$) holds, then $\{p_n(t,\eta_n,\cdot): t \in [0,\infty) \}$ has a $(t_n(\delta),\gamma_n)$ ${\rm{L}}^2$-cutoff (resp.~$(\tau_n(C),b_n)$ ${\rm{L}}^2$-cutoff).
\end{theorem}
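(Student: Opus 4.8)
The plan is to transport the ${\rm L}^2$-cutoff problem, via the intertwining $L_n\stackrel{\Lambda_n}{\sim}G_n$, onto the reversible ${\rm L}^2$-cutoff criterion of \cite{CSC10,CHS17}, whose Laplace-transform formulation is tailored to exactly this purpose. First recall that if $\eta_n$ has ${\rm L}^2(\pi_n)$-density $f_n$, then $\eta_n e^{tL_n}$ has density $e^{t\widehat{L}_n}f_n$ with respect to $\pi_n$, so that $D_{n,2}(\eta_n,t)^2=\norm{e^{t\widehat{L}_n}f_n-\mathbf{1}}_{\pi_n}^2$. Since $G_n$ is self-adjoint, Theorem \ref{thm:mcins}\ref{it:sbd} together with Remark \ref{rk:cont2} gives $\widehat{L}_n=\widehat\Lambda_n^{-1}G_n\widehat\Lambda_n$, hence $e^{t\widehat{L}_n}=\widehat\Lambda_n^{-1}e^{tG_n}\widehat\Lambda_n$; using that $L_n$ and $G_n$ are ergodic — so that $\ker G_n=\mathbb{C}\,\mathbf{1}$, $\widehat\Lambda_n\mathbf{1}\in\ker G_n$, and the normalizing constants match — one obtains the identity
\[
D_{n,2}(\eta_n,t)^2=\norm{\widehat\Lambda_n^{-1}\big(e^{tG_n}-E_{n,\{0\}}\big)h_n}_{\pi_n}^2=\norm{\int_{(0,\infty)}e^{-t\gamma}\,dF_{n,\gamma}^{*}f_n}_{\pi_n}^2,
\]
where $h_n:=\widehat\Lambda_nf_n$, $E_{n,\{0\}}$ is the orthogonal projection onto $\ker G_n$, $\{F_{n,B}\}$ is the non-self-adjoint resolution of identity of $L_n$ from Remark \ref{rk:cont2}, and $\gamma$ ranges over the eigenvalues of $-L_n$ (equivalently of $-G_n$); here $\lambda_n:=\lambda(L_n)$ of \eqref{eq:specgap} is the common spectral gap by Theorem \ref{thm:mcins}\ref{it:resolution}. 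Note also that $V_n(B)=\langle H_{n,B}f_n,f_n\rangle_{\pi_n}=\norm{F_{n,B}^{*}f_n}_{\pi_n}^2=\norm{\widehat\Lambda_n^{-1}E_{n,B}h_n}_{\pi_n}^2$.

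Next I would compare the non-self-adjoint objects with their reversible counterparts. As $\{F_{n,B}^{*}\}$ is a Dunford but not an orthogonal resolution, the last display is \emph{not} equal to $\int e^{-2t\gamma}\,dV_n(\gamma)$; instead set $\mu_n(B):=\langle E_{n,B}h_n,h_n\rangle_{\pi_{G_n}}$, the genuine finite positive spectral measure of the possibly signed function $h_n$ for the reversible chain $-G_n$, supported on $[\lambda_n,\infty)$. The very Riesz-system bounds used in the proofs of Theorem \ref{thm:mcins} and Corollary \ref{cor:spectralexp} yield, for every Borel $B$ and every $t\geq 0$,
\[
\norm{\widehat\Lambda_n}_{op}^{-2}\,\mu_n(B)\;\leq\;V_n(B)\;\leq\;\norm{\widehat\Lambda_n^{-1}}_{op}^{2}\,\mu_n(B),
\]
\[
\norm{\widehat\Lambda_n}_{op}^{-2}\!\int_{[\lambda_n,\infty)}\!\!e^{-2t\gamma}\,d\mu_n(\gamma)\;\leq\;D_{n,2}(\eta_n,t)^2\;\leq\;\norm{\widehat\Lambda_n^{-1}}_{op}^{2}\!\int_{[\lambda_n,\infty)}\!\!e^{-2t\gamma}\,d\mu_n(\gamma).
\]
Thus, up to the condition number $\kappa(\Lambda_n)=\kappa(\widehat\Lambda_n)$ — finite for each fixed $n$ — the chi-squared profile $t\mapsto D_{n,2}(\eta_n,t)$ is the square root of a Laplace transform of $\mu_n$, and each of $V_n([\lambda_n,\gamma])$, $\lambda_n(C)$, $\tau_n(C)$ and $t_n(\delta)$ is squeezed between its $\mu_n$-analogue evaluated at two levels differing only by this condition number. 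In particular the standing hypothesis $\pi_n(f_n^2)\to\infty$, i.e. ${\rm Var}_{\pi_n}(f_n)\to\infty$, transports to $\mu_n([\lambda_n,\infty))={\rm Var}_{\pi_{G_n}}(h_n)\to\infty$, which is precisely the nondegeneracy required to invoke the reversible criterion.

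It then remains to apply the reversible ${\rm L}^2$-cutoff theorem of \cite{CSC10,CHS17} to the normal semigroup $(e^{tG_n})_{t\geq 0}$ with spectral data $\mu_n$ (its proof uses only the spectral measure of the initial datum, hence applies to the signed $h_n$): it yields an ${\rm L}^2$-cutoff for the reversible family with profile $\big(\int e^{-2t\gamma}\,d\mu_n(\gamma)\big)^{1/2}$ if and only if condition $(2)$, respectively $(3)$, holds for the $\mu_n$-quantities, with cutoff window $\gamma_n$, respectively $b_n$. Feeding this through the two-sided bounds above — the multiplicative constants being irrelevant to the mere existence of a cutoff, and, since ${\rm Var}_{\pi_n}(f_n)\to\infty$ forces the cutoff time to outgrow $\log\kappa(\Lambda_n)$, absorbable into the stated windows — gives the equivalences $(1)\Leftrightarrow(2)\Leftrightarrow(3)$ and the $(t_n(\delta),\gamma_n)$ and $(\tau_n(C),b_n)$ ${\rm L}^2$-cutoffs for $\{p_n(t,\eta_n,\cdot):t\in[0,\infty)\}$. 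I expect the main obstacle to be precisely this non-self-adjointness, which is also the only genuinely new ingredient beyond \cite{CHS17}: because $F_{n,B}$ and $F_{n,B}^{*}$ do not commute, $D_{n,2}(\eta_n,t)^2$ admits no exact representation $\int e^{-2t\gamma}\,dV_n(\gamma)$, only the sandwich above, so one must re-run the Chebyshev/Laplace-transform estimates of \cite{DSC06,CSC10,CHS17} directly with the non-additive set function $V_n$, checking at each step that they rely only on the monotonicity of $\gamma\mapsto V_n([\lambda_n,\gamma])$ and on its two-sided control by $\mu_n$, and that $\kappa(\Lambda_n)$ never enters the limiting quantities.
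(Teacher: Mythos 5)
Your reduction to the reversible criterion is not the paper's route, and as written it has a genuine gap at the transfer step. The paper proves an \emph{exact} identity (Lemma \ref{lem:L2spectraldecomp}, taken from the second half of the proof of Lemma 3.19 in \cite{IT14}): $D_{n,2}(\eta_n,t)^2=\int_{[\lambda_n,\infty)}e^{-2\gamma t}\,dV_n(\gamma)$, so that the chi-squared profile is literally of the form (3.1) of \cite{CSC10} and Theorems 3.5 and 3.8 there apply verbatim, with the $V_n$-defined quantities $t_n(\delta),\lambda_n(C),\tau_n(C),\gamma_n,b_n$ and with no loss depending on $\Lambda_n$. You instead assert (without proof) that no such representation holds and replace it by a two-sided comparison with the reversible spectral measure $\mu_n$ of $h_n=\widehat\Lambda_n f_n$, at the price of multiplicative constants $\kappa(\Lambda_n)^{\pm2}$. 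Those constants depend on $n$ and the theorem imposes \emph{no} uniform bound on them — contrast this with the max-${\rm L}^p$ cutoff result later in the paper, which explicitly assumes $\sup_n\kappa(\Lambda_n)<\infty$ precisely because it argues through Corollary \ref{cor:spectralexp}-type bounds. With $\kappa(\Lambda_n)\to\infty$ arbitrarily fast, a sandwich $\kappa(\Lambda_n)^{-2}\,g_n^\mu\le g_n^2\le\kappa(\Lambda_n)^{2}\,g_n^\mu$ does not transfer existence of a cutoff, nor does it relate the statement's $V_n$-based quantities to their $\mu_n$-analogues within the stated windows; even your transport of the nondegeneracy hypothesis fails, since $\pi_n(f_n^2)\to\infty$ only gives $\kappa(\Lambda_n)^2\mu_n([\lambda_n,\infty))\to\infty$, not $\mu_n([\lambda_n,\infty))\to\infty$. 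The claim that $\mathrm{Var}_{\pi_n}(f_n)\to\infty$ ``forces the cutoff time to outgrow $\log\kappa(\Lambda_n)$'' is unsupported: $\kappa(\Lambda_n)$ is unrelated to $f_n$ and can dominate $t_n(\delta)\lambda_n(C)$.

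Your closing paragraph in effect concedes this: you say one must ``re-run'' the Chebyshev/Laplace-transform estimates of \cite{CSC10,CHS17} directly with the non-additive set function $V_n$ and check that $\kappa(\Lambda_n)$ never enters — but that re-running is exactly the content that is missing, and it is what the paper short-circuits by establishing the Laplace-transform identity for $V_n$ itself. If you believe the identity of Lemma \ref{lem:L2spectraldecomp} genuinely fails (your cross-term objection is worth raising, since $F_{n,B}$ and $F_{n,B}^*$ do not commute), then you are disputing the paper's key lemma and must either supply a corrected representation for $D_{n,2}^2$ in terms of a bona fide (signed or positive) measure intrinsic to $L_n$, or add a hypothesis such as $\sup_n\kappa(\Lambda_n)<\infty$, under which your argument would go through; as it stands the proposal proves neither the stated equivalences nor the $(t_n(\delta),\gamma_n)$ and $(\tau_n(C),b_n)$ window claims.
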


\begin{rk}\label{lem:cutoffrk}
	If $L_n$ is reversible, then $H_{n,B} = F_{n,B} F_{n,B}^* = F_{n,B}^2 = F_{n,B}$ since $F_{n,B}$ is a self-adjoint projection in this case. The above result then retrieves exactly Theorem $4.6$ of \cite{CSC10}.
\end{rk}

\subsection{Proof of Theorem \ref{thm:cutoff}}
To prove Theorem \ref{thm:cutoff}, it relies on the following lemma that relates the chi-squared distance to the spectral decomposition of the infinitesimal generator $-L$, which allows us to connect with the Laplace transform of the spectral measure $H_B = F_B F_B^*$.

\begin{lemma}\label{lem:L2spectraldecomp}
	Let $X$ be a Markov process with $X_0 \sim \eta$, generator $L \in \S(G)$, where $G$ is a reversible generator, such that $\eta \ll \pi$ with ${\rm{L}}^2(\pi)$-density $f$ and spectral gap $\lambda > 0$. Denote $\{F_B: B \in \mathcal{B}(\mathbb{C})\}$ to be the non-self-adjoint spectral measure for $-L$, 
	and we define, for $B \in \mathcal{B}(\mathbb{C})$,
	$$H_B = F_B F_B^*.$$
	Then, for $t \geq 0$,
	\begin{align*}
	D_2(\eta,t)^2 = \int_{[\lambda,\infty)} e^{-2\gamma t}\, d \langle H_{\gamma} f, f \rangle_{\pi}\,.
	\end{align*}
\end{lemma}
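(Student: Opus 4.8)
The plan is to reduce everything to the reversible case via the intertwining $L\stackrel{\Lambda}{\sim}G$ and then invoke the classical $\mathrm{L}^2$-spectral computation. First I would recall that for a reversible generator $G$ with self-adjoint spectral measure $\mathcal E=\{E_B\}$, the standard computation (as in \cite{CSC10}) gives, for the process $Y$ with $Y_0\sim\eta_G\ll\pi_Q$ having $\mathrm{L}^2(\pi_Q)$-density $g$,
\[
D_{2,G}(\eta_G,t)^2=\int_{[\lambda,\infty)} e^{-2\gamma t}\, d\langle E_\gamma g,g\rangle_{\pi_Q},
\]
coming from $P^G_t=e^{tG}=\int e^{-t\gamma}\,dE_{-\gamma}$ (note $-G$ is nonnegative self-adjoint on $\ell^2_0(\pi_Q)$ with spectrum in $[\lambda,\infty)$), the fact that $f(t,\eta_G,\cdot)-1=(e^{tG}-\pi_Q)g$, and Parseval for the projection-valued measure $E$. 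The core of the present lemma is to transport this to $L$ through $\Lambda$.

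The key identities I would assemble, all of which follow from Theorem~\ref{thm:mcins}\ref{it:resolution} and Remark~\ref{rk:cont2}: the spectral measure of $-L$ is $F_B=\Lambda E_B\Lambda^{-1}$, the semigroup is $P_t=e^{tL}=\int_{[\lambda,\infty)}e^{-t\gamma}\,dF_\gamma$, and hence $P_t=\Lambda e^{tG}\Lambda^{-1}$. Writing $\eta\ll\pi$ with density $f$, the density of $\eta P_t$ minus $1$ is $(P_t-\pi)f$ where the projection onto constants corresponds to $F_{\{0\}}$. Then
\[
D_2(\eta,t)^2=\norm{(P_t-\pi)f}_\pi^2=\Big\langle \Big(\int_{[\lambda,\infty)}e^{-t\gamma}\,dF_\gamma\Big)f,\Big(\int_{[\lambda,\infty)}e^{-t\gamma}\,dF_\gamma\Big)f\Big\rangle_\pi,
\]
and expanding the inner product against the adjoint family $F_\gamma^*$ produces the mixed spectral measure $H_\gamma=F_\gamma F_\gamma^*$. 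More precisely I would use that for bounded Borel $\phi$, $\langle \phi(-L)f,\psi(-L)f\rangle_\pi=\int\int \phi(\gamma)\overline{\psi(\mu)}\,d\langle F_\gamma F_\mu^* f,f\rangle_\pi$, and that the off-diagonal contributions vanish because $F$ is (conjugate to) a projection-valued measure, so $F_A F_B^{*}$ localizes on $A\cap B$; taking $\phi=\psi=e^{-t\cdot}\1_{[\lambda,\infty)}$ collapses the double integral to $\int_{[\lambda,\infty)}e^{-2\gamma t}\,d\langle H_\gamma f,f\rangle_\pi$.

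The main obstacle is making the manipulation of $F_B F_B^{*}$ rigorous: $F$ is a non-self-adjoint resolution of identity, so $\{H_B\}$ is \emph{not} itself a spectral measure, and one must justify that $B\mapsto\langle H_B f,f\rangle_\pi$ is a genuine (finite, positive) Borel measure on $[\lambda,\infty)$ for which the Laplace-transform identity holds. I would handle this by pushing everything through $\Lambda$: since $F_B=\Lambda E_B\Lambda^{-1}$ and $F_B^*=(\Lambda^{-1})^*E_B\Lambda^*$, one gets $\langle H_B f,f\rangle_\pi=\langle E_B\Lambda^{-1}f,\;E_B\Lambda^*f\rangle_{\pi_Q}$ (using that $E_B$ is a self-adjoint projection so $E_B=E_B^2=E_B^*$), which exhibits $B\mapsto\langle H_B f,f\rangle_\pi$ as a complex measure built from the genuine projection-valued measure $E$; its total variation is controlled by $\norm{\Lambda^{-1}f}_{\pi_Q}\norm{\Lambda^* f}_{\pi_Q}<\infty$. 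Positivity on the diagonal and the spectral-mapping fact $\sigma(-L)\setminus\{0\}\subset[\lambda,\infty)$ (from $\sigma(L)=\sigma(G)$ and the definition \eqref{eq:specgap} of the spectral gap, which is the same for $L$ and $G$ by reversibility of $G$) then give the stated integration range. Once these measure-theoretic points are in place, the identity is just the bounded functional calculus of Theorem~\ref{thm:mcins}\ref{it:resolution} applied to $\phi(\gamma)=e^{-\gamma t}$, and the proof is complete.
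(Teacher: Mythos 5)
You start from $D_2(\eta,t)^2=\norm{(P_t-\pi)f}_\pi^2$, but the ${\rm L}^2(\pi)$-density of $\eta P_t$ is $\widehat{P}_t f$, not $P_t f$ (indeed $\eta P_t(y)/\pi(y)=\widehat{P}_tf(y)$), and the paper's proof begins precisely with $D_2(\eta,t)^2=\norm{\widehat{P}_tf-\pi(f)}_\pi^2$. This is not a cosmetic point: the spectral measure of $-\widehat{L}$ is $F^*$ (Theorem \ref{thm:mcins}\ref{it:resolution}), so the diagonal terms of the expansion are $\norm{F_B^*f}_\pi^2=\langle H_Bf,f\rangle_\pi$ with $H_B=F_BF_B^*$, whereas your orientation would naturally produce $F_B^*F_B$ (the operator denoted $H_B^*$ in Section \ref{sec:nonasympest}), i.e.\ a statement different from the one asserted.

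The more serious problem is the step that collapses the double integral: you justify it by claiming that $F_AF_B^*$ ``localizes on $A\cap B$'' because $F$ is conjugate to a projection-valued measure. Only $F_AF_B=F_{A\cap B}$ holds; with the adjoint one has $F_AF_B^*=\Lambda E_A(\Lambda^*\Lambda)^{-1}E_B\Lambda^*$, and since $(\Lambda^*\Lambda)^{-1}$ need not commute with the projections $E_B$, this is generically nonzero for disjoint $A,B$ (it vanishes exactly in the unitary/normal situation, where there is nothing to prove). For the same reason $B\mapsto\langle H_Bf,f\rangle_\pi=\norm{F_B^*f}_\pi^2$ is not even finitely additive in general, so exhibiting it as a ``complex measure'' does not repair the computation; moreover your transported identity $\langle H_Bf,f\rangle_\pi=\langle E_B\Lambda^{-1}f,E_B\Lambda^*f\rangle_{\pi_Q}$ is incorrect, since its right-hand side equals $\langle E_B\Lambda^{-1}f,\Lambda^*f\rangle_{\pi_Q}=\langle F_Bf,f\rangle_\pi$, while the correct expression is $\norm{(\Lambda^*)^{-1}E_B\Lambda^*f}_\pi^2$. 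The paper does not carry out this collapse by hand: after the one-line reduction to $\norm{\widehat{P}_tf-\pi(f)}_\pi^2$ it invokes the computation in the second half of the proof of Lemma $3.19$ of \cite{IT14}, and that external input is exactly the nontrivial ingredient your sketch replaces with a commutation/localization property that non-normal kernels do not possess. As written, the proposal therefore has a genuine gap at its central step.
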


\begin{proof}
	By the definition of chi-square distance $D_2$ and $\pi(f) = 1$, we have
	$$D_2(\eta,t)^2 = \norm{\widehat{P}_t f - \pi(f)}_{\pi}^2 
	= \int_{[\lambda,\infty)} e^{-2\gamma t}\, d \langle H_{\gamma} f, f \rangle_{\pi},$$
	where the last equality follows from \cite[second half of the proof of Lemma $3.19$ on page 1542]{IT14}.
\end{proof}
Lemma \ref{lem:L2spectraldecomp} reveals that the problem of ${\rm{L}}^2$-cutoff reduces to the cutoff phenomenon of the Laplace transform. We proceed to complete the proof of Theorem \ref{thm:cutoff}. By Lemma \ref{lem:L2spectraldecomp}, we take $g_n(t) = D_{n,2}(\eta_n,t)$
in Definition \ref{def:cutoff}, and the desired result follows from the Laplace transform cutoff criteria in Theorem $3.5$ and Theorem $3.8$ of \cite{CSC10}. Precisely, the chi-squared distance is of the form
$$D_{n,2}(\eta_n,t)^2 = \int_{[\lambda_n,\infty)} e^{-2\gamma t}\, d V_n(\lambda).$$
This is exactly the form of function considered in \cite[equation $(3.1)$]{CSC10}, and consequently we can invoke Theorem $3.5$ and Theorem $3.8$ of the aforementioned paper.

\begin{rk}
	As mentioned in Remark \ref{lem:cutoffrk}, if $L_n$ is reversible, then our Theorem \ref{thm:cutoff} retrieves exactly Theorem 4.6 of \cite{CSC10} whose  proof  is a combination of the results of theorems 3.5, 3.8 and 4.4 therein. The strategy of the proof is as follows: Theorem 4.4 claims that the chi-squared distance to stationarity of a reversible Markov process is a Laplace transform, thus the Laplace transform cutoff results of theorems 3.5 and 3.8 can be applied.
	In our proof of Theorem \ref{thm:cutoff}, we follow the same strategy. We first show Lemma 4.1 (which is the parallel version of  \cite[Theorem 4.4]{CSC10}), which states that for $L \in \mathcal{S}(G)$ with $G$ being reversible, the chi-squared distance to stationarity of $L$ is also a Laplace transform. Consequently, the Laplace transform cutoff results of theorems 3.5 and 3.8 of  \cite{CSC10} can be applied in our setting.
\end{rk}

\subsection{${\rm{L}}^p$-cutoff}

We proceed by investigating the ${\rm{L}}^p$-cutoff for fixed $p \in (1,\infty)$ for the class $\S$. Recall that 
Chen and Saloff-Coste \cite[Theorem $4.2,4.3$]{CSC07} have shown that for a family of \textit{normal} ergodic transition kernel $P_n$, the max-${\rm{L}}^p$ cutoff is equivalent to the {\emph{spectral gap times mixing time}} going to infinity. We can extend their result to the case of the non-normal chains in $ \S $ as follows, using similar techniques as in \cite{Choi-Patie} for the class of skip-free chains similar to birth-death chains.
\begin{theorem}[Max-${\rm{L}}^p$ cutoff]
	Suppose that, for each  $n \geq 1$, $L_n \in  \S(G_n)$ with $G_n$ being a reversible generator, transition kernel $P^t_n = e^{tL_n} \stackrel{\Lambda_n}{\sim} Q^t_n = e^{tG_n}$ on $\mathcal{X}_n$ and spectral gap of $G_n$ given by $\lambda_n = \lambda_n(G_n)$, where we recall the definition of spectral gap in \eqref{eq:specgap}. Assume that the condition numbers $\kappa(\Lambda_n)$ of the link kernels are uniformly bounded, that is,
	$$\sup_{n \geq 1} \kappa(\Lambda_n) < \infty.$$
	Fix $p \in (1,\infty)$ and $\epsilon > 0$. Consider the max-${\rm{L}}^p$ distance to stationarity $$f_n(t) = \sup_{x \in \mathcal{X}_n} \left(\int_{\mathcal{X}_n} |p_n(t,\delta_x,y) - 1|^p \, \pi_n(dy)\right)^{1/p} $$
	and define
	$$t_n = \inf\{t > 0;\: f_n(t) \leq \epsilon\}, \quad \mathcal{F} = \{f_n;\: n=1,2,\ldots\}.$$
	Assume that each $n$, $f_n(t) \to 0$ as $t \to \infty$ and $t_n \to \infty$. Then the family $\mathcal{F}$ has a max-${\rm{L}}^p$ cutoff if and only if $t_n \lambda_n \to \infty$. In this case there is a $(t_n, \lambda_n^{-1})$ cutoff.
\end{theorem}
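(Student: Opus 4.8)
The plan is to reduce the non-normal case to the reversible case treated by Chen and Saloff-Coste \cite[Theorem $4.2,4.3$]{CSC07} by transferring the $\mathrm{L}^p$ distances of $P^t_n$ to those of $Q^t_n$ up to multiplicative constants that are uniform in $n$, exactly as is done for skip-free chains in \cite{Choi-Patie}. First I would record the intertwining relation on the level of densities: if $p_n(t,\delta_x,\cdot)$ denotes the $\mathrm{L}^1(\pi_n)$-density of $\delta_x P_n^t$ with respect to $\pi_n$, then by Remark \ref{rk:cont2} we have $P_n^t = \Lambda_n Q_n^t \Lambda_n^{-1}$, and one translates this identity into a pointwise relation between the density $p_n(t,\delta_x,\cdot) - 1$ of $P_n^t$ and the corresponding density for $Q_n^t$. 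The key estimate is that the operator $g \mapsto g - \pi_n(g)$ composed with $P_n^t$, viewed as acting on $\mathrm{L}^p(\pi_n)$, differs from its reversible counterpart only by pre- and post-composition with $\Lambda_n$ and $\Lambda_n^{-1}$.

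Second, I would invoke Riesz--Thorin interpolation to control $\Lambda_n$ and $\Lambda_n^{-1}$ on $\mathrm{L}^p(\pi_n)$ for $p \in (1,\infty)$. Since $\Lambda_n$ is Markovian (or at least a contraction on $\mathrm{L}^1$ and $\mathrm{L}^\infty$ after the appropriate normalization used in \cite{Choi-Patie}), while $\|\Lambda_n\|_{\mathrm{L}^2(\pi_{Q_n}) \to \mathrm{L}^2(\pi_n)}$ and $\|\Lambda_n^{-1}\|$ are controlled by $\kappa(\Lambda_n)$, interpolation yields bounds on $\|\Lambda_n\|_{\mathrm{L}^p \to \mathrm{L}^p}$ and $\|\Lambda_n^{-1}\|_{\mathrm{L}^p \to \mathrm{L}^p}$ depending only on $\kappa(\Lambda_n)$ and $p$. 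The uniform bound $\sup_n \kappa(\Lambda_n) < \infty$ then gives constants $0 < a_p \le b_p < \infty$, independent of $n$, such that
\begin{align*}
a_p\, \tilde f_n(t) \le f_n(t) \le b_p\, \tilde f_n(t),
\end{align*}
where $\tilde f_n$ is the max-$\mathrm{L}^p$ distance to stationarity of the reversible chain $Q_n^t$. (A small point: one must pass between the max over $x$ of the $\mathrm{L}^p$-density and the operator norm on $\mathrm{L}^p_0$; this is standard and uses that $p_n(t,\delta_x,\cdot)$ is $\widehat{P}_n^t$ applied to $\delta_x/\pi_n$.)

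Third, with the two-sided comparison in hand, the cutoff for $\mathcal F = \{f_n\}$ is equivalent to the cutoff for $\{\tilde f_n\}$, and the cutoff times $t_n$ and $\tilde t_n$ (defined by the corresponding hitting levels of $\epsilon$ versus $\epsilon/b_p$, say) are comparable, so $t_n \lambda_n \to \infty$ iff $\tilde t_n \lambda_n \to \infty$. Applying \cite[Theorem $4.2,4.3$]{CSC07} to the normal (indeed reversible) family $Q_n^t$ with spectral gap $\lambda_n = \lambda_n(G_n)$ — noting that similarity preserves the spectral gap, so $\lambda_n$ is also the relevant gap for $L_n$ — gives that $\{\tilde f_n\}$ has a max-$\mathrm{L}^p$ cutoff iff $\tilde t_n \lambda_n \to \infty$, with window $\lambda_n^{-1}$; transferring back yields the max-$\mathrm{L}^p$ cutoff for $\mathcal F$ with the same window $\lambda_n^{-1}$, the multiplicative constants being absorbed into the window-based formulation of cutoff. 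I expect the main obstacle to be the $\mathrm{L}^p$-boundedness of $\Lambda_n$ and $\Lambda_n^{-1}$ with constants depending only on $\kappa(\Lambda_n)$: one must be careful that the interpolation endpoints are the right ones (the $\mathrm{L}^1$ and $\mathrm{L}^\infty$ bounds require the Markovian normalization of $\Lambda_n$, whereas $\kappa(\Lambda_n)$ is an $\mathrm{L}^2$ quantity), and that the density/operator-norm translation in the max-$\mathrm{L}^p$ setting is done uniformly in $x$ and $n$.
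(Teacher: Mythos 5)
There is a genuine gap in your second step, and it is exactly the point you flag as ``the main obstacle'' but do not resolve: you cannot bound $\norm{\Lambda_n}_{{\rm{L}}^p(\pi_{Q_n})\to {\rm{L}}^p(\pi_n)}$ and $\norm{\Lambda_n^{-1}}_{{\rm{L}}^p(\pi_n)\to {\rm{L}}^p(\pi_{Q_n})}$ by quantities depending only on $\kappa(\Lambda_n)$, because the hypotheses supply no ${\rm{L}}^1$ or ${\rm{L}}^\infty$ endpoint whatsoever. In this theorem $\Lambda_n$ is only a bounded invertible operator between the $\ell^2$ spaces (Definition \ref{def:sim}); it is not assumed Markovian, and even when it is, $\Lambda_n^{-1}$ is essentially never positivity-preserving (by Theorem \ref{thm:mcins}\ref{it:lattice}, an invertible Markov kernel with nonnegative inverse is a permutation), so Riesz--Thorin has nothing to interpolate against for $\Lambda_n^{-1}$. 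Consequently the two-sided comparison $a_p\,\tilde f_n(t)\le f_n(t)\le b_p\,\tilde f_n(t)$ with constants controlled by $\sup_n\kappa(\Lambda_n)$ cannot be established from the stated assumptions; there is also a secondary problem in your first step, since $\delta_x P_n^t=\delta_x\Lambda_n Q_n^t\Lambda_n^{-1}$ and $\delta_x\Lambda_n$ need not be a point mass, nor even a probability measure, so the max over $x$ of the ${\rm{L}}^p$ density of $P_n^t$ does not transfer to the corresponding quantity for $Q_n^t$ in any direct way.

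The paper's route avoids this by never moving the ${\rm{L}}^p$ analysis onto $Q_n^t$: it interpolates the operator $P_n^t-\pi_n$ itself. Since $P_n^t$ and $\pi_n$ are Markov kernels, $P_n^t-\pi_n$ is trivially bounded on ${\rm{L}}^1$ and ${\rm{L}}^\infty$ (norm at most $2$), and the ${\rm{L}}^2$ endpoint is supplied by Corollary \ref{cor:spectralexp}, which is the only place $\kappa(\Lambda_n)$ enters; interpolating in both directions (equation $(3.4)$ and Lemma $4.1$ of \cite{CSC07}) gives the two-sided bound
\begin{equation*}
2^{-1+\theta_p} e^{-\lambda_n t\theta_p} \;\leq\; \norm{P_n^t - \pi_n}_{{\rm{L}}^p(\pi_n) \to {\rm{L}}^p(\pi_n)} \;\leq\; 2^{|1-2/p|} \left(\kappa(\Lambda_n) e^{-\lambda_n t}\right)^{1-|1-2/p|}
\end{equation*}
of Lemma \ref{lem:l2lp}, with constants uniform in $n$ under $\sup_n\kappa(\Lambda_n)<\infty$, after which the proof of \cite[Theorems $4.2$, $4.3$]{CSC07} runs unchanged for the non-normal chains. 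If you want to salvage your transference strategy you would have to add hypotheses (e.g.\ uniform ${\rm{L}}^1$ and ${\rm{L}}^\infty$ bounds on both $\Lambda_n$ and $\Lambda_n^{-1}$) that the theorem does not impose, so as stated your argument does not prove the result.
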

The proof in \cite[Theorem $4.2,4.3$]{CSC07} works nicely as long as we have Lemma \ref{lem:l2lp} below, which gives a two-sided control on the ${\rm{L}}^p(\pi)$ norm of $P^t - \pi$. 
The following lemma is then the key to the proof.
\begin{lemma}\label{lem:l2lp}
	Suppose that $L \in  \S(G)$ with $G$ being a reversible generator, transition kernel $P^t = e^{tL} \stackrel{\Lambda}{\sim} Q^t = e^{tG}$ and the spectral gap of $G$ is $\lambda = \lambda(G)$, where we recall the definition of spectral gap in \eqref{eq:specgap}. Fix $p \in (1,\infty)$. Then, for any $t \geq 0$, we have
	\begin{align}
	2^{-1+\theta_p} e^{-\lambda t\theta_p} &\leq \norm{P^t - \pi}_{{\rm{L}}^p(\pi) \to {\rm{L}}^p(\pi)} \leq 2^{|1-2/p|} (\kappa(\Lambda) e^{-\lambda t})^{1-|1-2/p|}, \label{eq:lp}
	\end{align}
	where $\theta_p \in [1/2,1]$ and $\kappa(\Lambda)= \norm{\Lambda}_{{\rm{L}}^2(\pi_Q) \to {\rm{L}}^2(\pi)} \: \norm{\Lambda^{-1}}_{{\rm{L}}^2(\pi) \to {\rm{L}}^2(\pi_Q)}$.
\end{lemma}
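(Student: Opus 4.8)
The plan is to obtain the two-sided ${\rm{L}}^p$ bound by interpolating between the ${\rm{L}}^2$ estimate already available from Corollary \ref{cor:spectralexp} (and its continuous-time analogue via Remark \ref{rk:cont2}) and the trivial ${\rm{L}}^1\to{\rm{L}}^1$ and ${\rm{L}}^\infty\to{\rm{L}}^\infty$ contraction bounds. First I would record the elementary facts that $P^t-\pi$ acts as a contraction on ${\rm{L}}^1(\pi)$ and on ${\rm{L}}^\infty(\pi)$ with norm at most $2$ (writing $P^t-\pi$ as $P^t$ composed with the projection onto the mean-zero subspace, or simply bounding $\norm{P^t g-\pi g}\le \norm{P^t g}+\norm{\pi g}$ and using that $P^t$ is a Markov operator). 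Next, from Lemma \ref{lem:L2spectraldecomp}, or more directly from \eqref{eq:hyp} in Corollary \ref{cor:spectralexp} transported to continuous time, one has $\norm{P^t-\pi}_{{\rm{L}}^2(\pi)\to{\rm{L}}^2(\pi)}\le \kappa(\Lambda)\,e^{-\lambda t}$, since the SLEM of $e^{tL}$ on $\ell^2_0(\pi)$ is $e^{-\lambda t}$ where $\lambda=\lambda(G)$ is the spectral gap of the reversible companion $G$ (similarity preserves the spectrum by Theorem \ref{thm:mcins}\ref{it:resolution}). The upper bound in \eqref{eq:lp} then follows from the Riesz--Thorin interpolation theorem: for $p\ge 2$ interpolate between ${\rm{L}}^2$ (exponent $\theta=2/p$, giving the factor $(\kappa(\Lambda)e^{-\lambda t})^{2/p}$) and ${\rm{L}}^\infty$ (exponent $1-2/p$, giving the factor $2^{1-2/p}$); for $p\in(1,2)$ interpolate between ${\rm{L}}^1$ and ${\rm{L}}^2$ symmetrically, and in both cases the exponent of $\kappa(\Lambda)e^{-\lambda t}$ is $1-|1-2/p|$ and the exponent of $2$ is $|1-2/p|$.

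For the lower bound in \eqref{eq:lp}, the idea is to test $P^t-\pi$ against a well-chosen mean-zero function and use duality between ${\rm{L}}^p$ and ${\rm{L}}^q$ with $1/p+1/q=1$. Take a real eigenfunction $g$ of $G$ associated with the spectral gap, so $Q^t g = e^{-\lambda t}g$ with $\pi_Q(g)=0$; then $f:=\Lambda g$ satisfies $P^t f = e^{-\lambda t} f$ and $\pi(f)=0$ (the latter because $P^t$ fixes constants and the adjoint relation $\langle f,\mathbf 1\rangle_\pi$ is preserved under the intertwining, or one normalizes so that $\Lambda$ maps the mean-zero subspace to the mean-zero subspace). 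Hence $\norm{(P^t-\pi)f}_{{\rm{L}}^p(\pi)} = e^{-\lambda t}\norm{f}_{{\rm{L}}^p(\pi)}$, which immediately gives $\norm{P^t-\pi}_{{\rm{L}}^p(\pi)\to{\rm{L}}^p(\pi)}\ge e^{-\lambda t}$. To sharpen the exponent from $e^{-\lambda t}$ to $e^{-\lambda t\theta_p}$ with $\theta_p\in[1/2,1]$ and to produce the constant $2^{-1+\theta_p}$, I would follow the Riesz--Thorin lower-bound argument of Chen--Saloff-Coste: the operator norm as a function of $1/p$ is log-convex, it equals (at least) $e^{-\lambda t}$ at $1/p=1/2$, and it is bounded below by the norm at the endpoints; pinning down $\theta_p$ amounts to taking $\theta_p$ to be the interpolation parameter that makes the log-convexity inequality tight for the given $p$, exactly as in \cite[Theorem $4.2,4.3$]{CSC07} and \cite{Choi-Patie}. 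Since this part is essentially identical to the reversible case once the eigenfunction $f=\Lambda g$ is in hand, I would cite those references for the precise bookkeeping rather than reproduce it.

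The main obstacle is the lower bound, specifically verifying that the eigenfunction transported through $\Lambda$ genuinely lies in ${\rm{L}}^p(\pi)$ with controlled norm and has $\pi$-mean zero, and then extracting the precise constant $2^{-1+\theta_p}$; the latter is not a consequence of a soft interpolation inequality but of the Stein--Weiss/Riesz--Thorin \emph{sharpness} considerations used in \cite{CSC07}. The bounded invertibility of $\Lambda$ between the ${\rm{L}}^2$ spaces guarantees $f\in{\rm{L}}^2(\pi)$, but membership in ${\rm{L}}^p(\pi)$ for $p>2$ (or integrability issues for $p<2$ on an infinite state space) requires a small additional argument — in the finite state space setting relevant to the cutoff theorem it is automatic, and in general one restricts attention to compact $P$ or invokes hypercontractivity of the reversible companion $Q^t$. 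The upper bound, by contrast, is a routine interpolation once the ${\rm{L}}^2$ bound $\kappa(\Lambda)e^{-\lambda t}$ and the ${\rm{L}}^1,{\rm{L}}^\infty$ contractivity are recorded, so I expect it to take only a few lines.
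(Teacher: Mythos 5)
Your proposal is correct and, in substance, follows the paper's proof: the upper bound is exactly the paper's one-line application of Riesz--Thorin, interpolating the ${\rm{L}}^2$ bound $\kappa(\Lambda)e^{-\lambda t}$ from Corollary \ref{cor:spectralexp} against the trivial ${\rm{L}}^1$/${\rm{L}}^{\infty}$ bound $2$ (this is \cite[equation $3.4$]{CSC07}), and the paper's lower bound is the reverse interpolation inequality \cite[Lemma $4.1$]{CSC07}, namely $\norm{P^t-\pi}_{{\rm{L}}^p(\pi)\to{\rm{L}}^p(\pi)}\ge 2^{-1+\theta_p}\norm{P^t-\pi}_{{\rm{L}}^2(\pi)\to{\rm{L}}^2(\pi)}^{\theta_p}$, combined with the ${\rm{L}}^2$ lower bound $\norm{P^t-\pi}_{{\rm{L}}^2(\pi)\to{\rm{L}}^2(\pi)}\ge e^{-\lambda t}$ of Corollary \ref{cor:spectralexp} --- which is precisely the ``sharpening'' you end up deferring to \cite{CSC07}. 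Where you diverge is the preliminary eigenfunction step: transporting a gap eigenfunction $g$ of $G$ to $f=\Lambda g$ and testing $P^t-\pi$ against it in ${\rm{L}}^p$ is superfluous (by itself it yields $e^{-\lambda t}$, which neither implies nor is implied by the stated $2^{-1+\theta_p}e^{-\lambda t\theta_p}$), and the ``main obstacle'' you flag (membership of $f$ in ${\rm{L}}^p(\pi)$, its $\pi$-mean, the constant $2^{-1+\theta_p}$) simply does not arise on the paper's route: the reverse interpolation inequality transfers the ${\rm{L}}^2$ operator-norm lower bound to ${\rm{L}}^p$ directly, and that ${\rm{L}}^2$ bound needs only the spectral radius of $P^t-\pi$ on $\ell^2_0(\pi)$ (or an eigenfunction in ${\rm{L}}^2$, automatic since $\Lambda$ is bounded), never a test function in ${\rm{L}}^p$. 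Incidentally, if you retain the eigenfunction step, the mean-zero property is free: $\pi P^t=\pi$ and $P^tf=e^{-\lambda t}f$ with $e^{-\lambda t}\neq 1$ force $\pi(f)=0$; no normalization of $\Lambda$ is needed.
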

\begin{proof}
	By the  Riesz-Thorin interpolation theorem, see e.g.~\cite[equation $3.4$]{CSC07}, we have
	$$\norm{P^t - \pi}_{{\rm{L}}^p(\pi) \to {\rm{L}}^p(\pi)} \leq 2^{|1-2/p|} \norm{P^t - \pi}_{{\rm{L}}^2(\pi) \to {\rm{L}}^2(\pi)}^{1-|1-2/p|},$$
	which when combined with Corollary \ref{cor:spectralexp} gives the upper bound of \eqref{eq:lp}. Next, to show the lower bound in \eqref{eq:lp}, we use another version of the Riesz-Thorin interpolation theorem,  see e.g.~\cite[Lemma $4.1$]{CSC07}, to  get
	$$\norm{P^t - \pi}_{{\rm{L}}^p(\pi) \to {\rm{L}}^p(\pi)} \geq2^{-1+\theta_p} \norm{P^t - \pi}_{{\rm{L}}^2(\pi) \to {\rm{L}}^2(\pi)}^{\theta_p} \geq 2^{-1+\theta_p} e^{-\lambda t\theta_p},$$
	where we use Corollary \ref{cor:spectralexp} in the second inequality. This completes the proof.
\end{proof}


\section{Non-asymptotic estimation error bounds for integral functionals}\label{sec:nonasympest}

In this Section, we would like to estimate integral functionals of the type
$$\Gamma_T(f) = \int_0^T f(X_t)\, dt, \quad T \geq 0,$$
where $T$ is a fixed time and $f$ is a function such that the integral $\Gamma_T(f)$ is well-defined. This follows the line of work of \cite{AC16}, who studied the same problem with the assumption that the infinitesimal generator of the Markov process is a normal operator. This type of integral functionals appear in a number of applications. For instance, if we take $f = \1_{B}$, the indicator function of the Borel set $B$, then $\Gamma_T(f)$ is the occupation time of the process in $B$. As another example, it is not hard to see that such functional appears in the study of path-dependent derivatives in mathematical finance, see e.g. \cite{MMM97}. In practice however, one often only have access to a sample path of the Markov process at discrete time point. A natural estimator for $\Gamma_T(f)$, known as the Riemann-sum estimator, is given by
$$\hat{\Gamma}_{T,n}(f) = \sum_{k=1}^n f(X_{(k-1)\Delta_n}) \Delta_n,$$
where we observe $(X_t)_{t \in [0,T]}$ at discrete epochs $t = (k-1)\Delta_n$ with $k \in \llbracket n \rrbracket$ and $\Delta_n = T/n$, with the idea that we approximate $\Gamma_T(f)$ by its Riemann-sum.

For a stationary Markov process and $f \in {\rm{L}}^2(\pi)$, both $\Gamma_T(f)$ and $\hat{\Gamma}_{T,n}(f)$ are $\pi$-a.s. defined everywhere in ${\rm{L}}^2(\mathbb{P})$. If $L \in \S(G)$, we identify by Riesz theorem a linear self-adjoint operator $A$ such that for $f,g \in {\rm{L}}^2(\pi)$,
$$\langle Af,g \rangle_{\pi} = \int_{\sigma(L)} |\lambda|^2 \, d\langle H_{\lambda}^* f, g \rangle_{\pi},$$
where we recall $H_{\lambda}^* = F_{\lambda}^* F_{\lambda}$ is a self-adjoint operator and $F_{\lambda}$ is the spectral measure of $-L$. For $s \geq 0$, we define the space $\mathcal{D}^s(A) = \mathrm{Dom}(A^s) \subset {\rm{L}}^2(\pi)$ by functional calculus on $A$ with the seminorm $\norm{f}_{\mathcal{D}^s(A)} = \norm{A^{s/2} f}_{\pi}$.

The main results are the following error bounds, in which the proof is similar as that of \cite[Theorem $2.2$, Corollary $2.3$, Theorem $2.4$]{AC16} and is deferred to Section \ref{subsec:proofofest}. Note that \eqref{eq:esterror2} gives the error bound on the space average of $X$ with the finite-time and finite-sample estimator $T^{-1}\hat{\Gamma}_{T,n}(f)$, while \eqref{eq:esterror3} offers the error bound for the non-stationary Markov process such that $X_0 \sim \eta$.

\begin{theorem}\label{thm:esterror}
	Let $X$ be a Markov process with $X_0 \sim \pi$ and generator $L \in \S(G)$. There exists a constant $C$ such that for all $T \geq 0$, $0 \leq s \leq 1$, $f \in \mathcal{D}^s(A)$, $f_0 \in \mathrm{Dom}(A^{-1})$ with $f_0 = f - \int f\, d\pi$,
	\begin{align}
	\norm{\Gamma_T(f) - \hat{\Gamma}_{T,n}(f)}_{{\rm{L}}^2(\mathbb{P})} &\leq C \sqrt{\norm{f}_{\mathcal{D}^s(A)} \norm{f}_{\pi} T \Delta_n^{1+s}}, \label{eq:esterror}\\
	\norm{T^{-1}\hat{\Gamma}_{T,n}(f) - \int f \, d\pi}_{{\rm{L}}^2(\mathbb{P})} &\leq \dfrac{C}{\sqrt{T}} \left(\sqrt{\norm{f}_{\mathcal{D}^s(A)} \norm{f}_{\pi} \Delta_n} + \sqrt{\norm{A^{-1}f_0}_{\pi} \norm{f_0}_{\pi}}\right). \label{eq:esterror2}
	\end{align}
	If $X_0 \sim \eta$ such that $\eta \ll \pi$ with density $d \eta/d\pi$, then there exists a constant $C$ such that for all $T \geq 0$, $0 \leq s \leq 1$ and $f \in \mathcal{D}^s(A)$,
	\begin{align}\label{eq:esterror3}
	\norm{\Gamma_T(f) - \hat{\Gamma}_{T,n}(f)}_{{\rm{L}}^2(\mathbb{P})} \leq C \norm{\dfrac{d \eta}{d \pi}}_{\infty,\pi}^{1/2} \sqrt{\norm{f}_{\mathcal{D}^s(A)} \norm{f}_{\pi} T \Delta_n^{1+s}},
	\end{align}
	where $\norm{\cdot}_{\infty,\pi}$ is the sup-norm in $L^{\infty}(\pi)$.
\end{theorem}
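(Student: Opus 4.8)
The plan is to follow \cite{AC16} line by line, replacing the self-adjoint functional calculus of a normal generator by the scalar-type spectral calculus of $L \in \S(G)$ supplied by Theorem~\ref{thm:mcins}\ref{it:resolution} (in its continuous-time form, Remark~\ref{rk:cont2}), and to keep track of the condition number $\kappa(\Lambda)$ whenever a norm is transferred between $\ell^2(\pi)$ and $\ell^2(\pi_Q)$. First I would reduce \eqref{eq:esterror3} and \eqref{eq:esterror2} to \eqref{eq:esterror}. For \eqref{eq:esterror3}: on the $\sigma$-algebra generated by $(X_t)_{t\in[0,T]}$ the law of the process started from $\eta$ is absolutely continuous with respect to the one started from $\pi$, with density $(d\eta/d\pi)(X_0)$; since $\Gamma_T(f)-\hat\Gamma_{T,n}(f)$ is measurable for that $\sigma$-algebra, $\E_\eta[(\Gamma_T(f)-\hat\Gamma_{T,n}(f))^2] \le \norm{d\eta/d\pi}_{\infty,\pi}\,\E_\pi[(\Gamma_T(f)-\hat\Gamma_{T,n}(f))^2]$, which yields \eqref{eq:esterror3} from \eqref{eq:esterror}. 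For \eqref{eq:esterror2}: write $T^{-1}\hat\Gamma_{T,n}(f)-\pi(f) = T^{-1}(\hat\Gamma_{T,n}(f)-\Gamma_T(f)) + T^{-1}\int_0^T f_0(X_t)\,dt$ with $f_0 = f-\pi(f)$; the first summand contributes $T^{-1}$ times \eqref{eq:esterror}, while the ${\rm L}^2(\mP)$-norm of the second equals $(\mathrm{Var}_\pi(T^{-1}\int_0^T f_0(X_t)\,dt))^{1/2}$, which the stationary variance identity and the spectral resolution of $-L$ bound by $\tfrac{2}{T}\int_0^\infty |\langle f_0,P_u f_0\rangle_\pi|\,du \le \tfrac{C}{T}\,\norm{A^{-1}f_0}_\pi\,\norm{f_0}_\pi$ --- the last step via the functional calculus on $A$ and Cauchy--Schwarz, using $f_0 \in \mathrm{Dom}(A^{-1})$.

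For \eqref{eq:esterror}, write $\Gamma_T(f)-\hat\Gamma_{T,n}(f) = \sum_{k=1}^n Z_k$ with $Z_k = \int_{(k-1)\Delta_n}^{k\Delta_n}(f(X_t)-f(X_{(k-1)\Delta_n}))\,dt$ (the constant part of $f$ cancels, so we assume $f = f_0$), and expand $\E_\pi[(\sum_k Z_k)^2]$ using stationarity and $\E_\pi[f(X_a)f(X_b)] = \langle f,P_{|b-a|}f\rangle_\pi$. As in \cite{AC16}, the second moment becomes a bilinear form $\langle f,\Phi_{T,n}(P)f\rangle_\pi$, where $\Phi_{T,n}$ is the same function of the spectral variable --- assembled from $u\mapsto e^{-u\lambda}$, $\Delta_n$ and $T$ --- that occurs in the reversible analysis there. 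By Theorem~\ref{thm:mcins}\ref{it:resolution}, $\Phi_{T,n}(P) = \int_{\sigma(-L)}\Phi_{T,n}(\lambda)\,dF_\lambda$; pushing this through the intertwining $P = \Lambda Q\Lambda^{-1}$, with $Q$ self-adjoint and spectral measure $E$, rewrites the bilinear form as $\int_{\sigma(-L)}\Phi_{T,n}(\lambda)\,d\langle E_\lambda\Lambda^{-1}f,\Lambda^*f\rangle_{\pi_Q}$, a single spectral integral against a finite signed measure. This is the analogue, for the present quadratic functional, of Lemma~\ref{lem:L2spectraldecomp}.

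It then remains to estimate the kernel and translate the norms. As in \cite{AC16}, the elementary interpolation bound $|1-e^{-\lambda u}| \le \min(\lambda u, 1) \le (\lambda u)^s$, valid for $\lambda \ge 0$ and $0 \le s \le 1$, integrated over $u\in[0,\Delta_n]$ and summed over $k$, yields $|\Phi_{T,n}(\lambda)| \le C^2\,T\,\Delta_n^{1+s}\,|\lambda|^s$ uniformly in $\lambda \in \sigma(-L) \subset [\lambda,\infty)$ (here the spectral gap $\lambda > 0$ enters). Cauchy--Schwarz for the bilinear form $B \mapsto \langle E_B\Lambda^{-1}f,\Lambda^*f\rangle_{\pi_Q}$, followed by Cauchy--Schwarz in $\lambda$, bounds $\langle f,\Phi_{T,n}(P)f\rangle_\pi$ by $C^2\,T\,\Delta_n^{1+s}$ times the geometric mean of the four quantities $\int_{\sigma(-L)}|\lambda|^{2s}\,d\norm{E_\lambda\Lambda^{-1}f}^2_{\pi_Q}$, $\norm{\Lambda^{-1}f}^2_{\pi_Q}$, $\int_{\sigma(-L)}|\lambda|^{2s}\,d\norm{E_\lambda\Lambda^*f}^2_{\pi_Q}$ and $\norm{\Lambda^*f}^2_{\pi_Q}$; since $\kappa(\Lambda) < \infty$, the two-sided operator bounds on $\Lambda$ and the definition of $A$ through $H_\lambda^* = F_\lambda^* F_\lambda$ identify these, up to a constant depending only on $\kappa(\Lambda)$, with $\norm{f}^2_{\mathcal D^s(A)}$, $\norm{f}^2_\pi$, $\norm{f}^2_{\mathcal D^s(A)}$ and $\norm{f}^2_\pi$. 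Absorbing the $\kappa(\Lambda)$-factor into $C$ and taking a square root gives \eqref{eq:esterror}.

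The step I expect to be the main obstacle is this spectral representation together with the identification of the $\ell^2(\pi_Q)$ Sobolev-type norms of $\Lambda^{-1}f$ and $\Lambda^*f$ with those of $f$ in $\mathcal D^s(A)$ and ${\rm L}^2(\pi)$: because $\{F_\lambda\}$ is a non-self-adjoint resolution of the identity, $\langle f,\Phi_{T,n}(P)f\rangle_\pi$ is not a priori an integral against a positive measure, and one must route through the similarity $P = \Lambda Q\Lambda^{-1}$ --- equivalently, through the quasi-self-adjoint spectral machinery of \cite{IT14} already used for Lemma~\ref{lem:L2spectraldecomp} --- to control it by the self-adjoint spectral measure attached to $H_\lambda^* = F_\lambda^* F_\lambda$, at the cost of the auxiliary vector $\Lambda^*f$ and of the factor $\kappa(\Lambda)$. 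Everything else is a transcription of \cite{AC16}: the kernel estimate there depends on the spectrum only through $e^{-u\lambda}$ and the spectral gap, hence carries over verbatim to the class $\S$, and the range $0 \le s \le 1$ is exactly the window in which $|1-e^{-\lambda u}| \le (\lambda u)^s$ holds --- which is what links the smoothness exponent $s$ of $\mathcal D^s(A)$ to the convergence rate $\Delta_n^{1+s}$.
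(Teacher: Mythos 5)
Your overall architecture is the same as the paper's: reduce \eqref{eq:esterror3} by change of measure, reduce \eqref{eq:esterror2} by the triangle inequality plus a variance bound, expand the second moment in \eqref{eq:esterror} into diagonal and off-diagonal terms, represent each as a spectral integral of the kernels $\Phi,\tilde\Phi$ of \cite{AC16} via the scalar-type functional calculus, and bound the kernels by $C\,T\Delta_n^{1+s}|\lambda|^s$. The difference, and the place where your argument has a genuine gap, is the final estimation of the spectral integral. You split the weight $|\lambda|^s$ symmetrically over the two factors of the vector measure $B\mapsto\langle E_B\Lambda^{-1}f,\Lambda^*f\rangle_{\pi_Q}$, which produces (among your four quantities) $\int_{\sigma(-L)}|\lambda|^{2s}\,d\norm{E_\lambda\Lambda^*f}^2_{\pi_Q}$, and you claim this is identified with $\norm{f}^2_{\mathcal D^s(A)}$ up to a constant depending only on $\kappa(\Lambda)$. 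That identification fails in general: since $E_B\Lambda^*f=\Lambda^*F_B^*f$, this quantity is comparable to $\int|\lambda|^{2s}\,d\langle H_\lambda f,f\rangle_\pi$ with $H_\lambda=F_\lambda F_\lambda^*$, i.e.\ to a Sobolev-type norm of $f$ built from the \emph{adjoint} resolution, whereas $A$ (and hence $\mathcal D^s(A)$) is built from $H^*_\lambda=F^*_\lambda F_\lambda$. Equivalently, writing $g=\Lambda^{-1}f$, you are asking that the metric operator $\Lambda^*\Lambda$ be bounded on the scale $\mathrm{Dom}((-G)^s)$ uniformly in $s\in[0,1]$; this is an extra regularity hypothesis on the link, not a consequence of $\kappa(\Lambda)<\infty$ and $f\in\mathcal D^s(A)$, so for $s>0$ your bound does not close as stated.

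The fix is exactly the paper's Lemma \ref{lem:dsa} (taken from \cite{IT14}), which you can reproduce in your own framework by splitting asymmetrically: write $\langle F_{\Delta_k}f,f\rangle_\pi=\langle F_{\Delta_k}f,F^*_{\Delta_k}f\rangle_\pi$, put the whole weight $|\lambda_k|^s$ on the $F_{\Delta_k}f$ factor, and use $\sum_k\norm{F^*_{\Delta_k}f}^2_\pi\le\norm{f}^2_\pi$ for the other; this yields $\bigl|\int\Phi(\lambda)\,d\langle F_\lambda f,f\rangle_\pi\bigr|\le CT\Delta_n^{1+s}\norm{f}_{\mathcal D^s(A)}\norm{f}_\pi$ with no reference to $\Lambda^*f$ and no $\kappa(\Lambda)$ loss. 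A secondary point of the same nature arises in your treatment of \eqref{eq:esterror2}: bounding the stationary variance by $\tfrac2T\int_0^\infty|\langle f_0,P_uf_0\rangle_\pi|\,du$ moves the absolute value inside the time integral, after which the non-self-adjoint spectral representation can no longer be invoked to produce the kernel $\overline\Phi(\lambda)\lesssim(|\lambda|T)^{-1}$; the paper instead keeps the double time integral, identifies $\overline\Phi$ by functional calculus first, and only then applies the Lemma \ref{lem:dsa} Cauchy--Schwarz, which is the order of operations you should adopt. With these two corrections your proposal coincides with the paper's proof; the reductions for \eqref{eq:esterror2} and \eqref{eq:esterror3} and the kernel bounds imported from \cite{AC16} are handled as in the paper.
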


\begin{rk}
	When $L$ is reversible, then $A$ can be identified as $|L|^2$, where we can then retrieve the results of \cite{AC16}.
\end{rk}

\subsection{Proof of Theorem \ref{thm:esterror}}\label{subsec:proofofest}
	We first state a lemma (see \cite[first half of the proof of Lemma $3.19$ on page 1542]{IT14}) which will be used repeatedly in the proof.
	\begin{lemma}\label{lem:dsa}
		For $f \in \mathcal{D}^s(A)$,
		\begin{align}\label{eq:dsa}
		\left|\int_{\sigma(L)} \lambda^s \,d\langle F_{\lambda} f,f \rangle_{\pi}\right| \leq \left(\int_{\sigma(L)} |\lambda|^{2s} \, d\langle H_{\lambda}^* f, f \rangle_{\pi}\right)^{1/2} \norm{f}_{\pi} = \norm{f}_{\mathcal{D}^s(A)} \norm{f}_{\pi}.
		\end{align}
	\end{lemma}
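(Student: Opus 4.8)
The plan is to read the estimate as a Cauchy--Schwarz inequality, after transporting its left-hand side to the self-adjoint picture attached to the reversible generator $G$, where the spectral integral becomes a genuine Lebesgue--Stieltjes integral against a finite complex measure. To set up the dictionary: since $L\in\S(G)$ with $G$ reversible, Remark~\ref{rk:cont2} gives that the non-self-adjoint resolution of the identity of $-L$ is $F_B=\Lambda E_B\Lambda^{-1}$, where $\{E_B\}$ is the self-adjoint resolution of the identity of the non-negative self-adjoint operator $D:=-G$; for an ergodic chain $\sigma(-L)=\sigma(D)\subseteq\{0\}\cup[\lambda,\infty)$, so, as in Lemma~\ref{lem:L2spectraldecomp}, all the integrals below are read over $\sigma(-L)\subseteq[0,\infty)$. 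Put $g:=\Lambda^{-1}f\in\ell^2(\pi_Q)$ and let $S:=\widehat{\Lambda}\Lambda$, a bounded, self-adjoint, strictly positive and boundedly invertible operator on $\ell^2(\pi_Q)$. A one-line computation using $\langle\Lambda a,b\rangle_\pi=\langle a,\widehat{\Lambda}b\rangle_{\pi_Q}$ gives, for every Borel set $B$,
\[
\langle F_B f,f\rangle_\pi=\langle E_B g,Sg\rangle_{\pi_Q},\qquad \langle H_B^* f,f\rangle_\pi=\norm{F_B f}_\pi^2=\langle SE_B g,E_B g\rangle_{\pi_Q}.
\]

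The first identity exhibits $B\mapsto\langle F_B f,f\rangle_\pi$ as the finite complex measure $B\mapsto\langle E_B g,Sg\rangle_{\pi_Q}$, so the ordinary functional calculus of $D$ (legitimate on $\mathrm{Dom}(D^s)$, into which $g$ falls precisely because $f\in\mathcal{D}^s(A)$) gives, with $D^s:=\int\lambda^s\,dE_\lambda$,
\[
\int\lambda^s\,d\langle F_\lambda f,f\rangle_\pi=\int\lambda^s\,d\langle E_\lambda g,Sg\rangle_{\pi_Q}=\langle D^s g,Sg\rangle_{\pi_Q}=\langle S^{1/2}D^s g,\,S^{1/2}g\rangle_{\pi_Q},
\]
the last step using $S=S^{1/2}S^{1/2}$ with $S^{1/2}$ self-adjoint. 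Cauchy--Schwarz then yields
\[
\left|\int\lambda^s\,d\langle F_\lambda f,f\rangle_\pi\right|\le\norm{S^{1/2}D^s g}_{\pi_Q}\,\norm{S^{1/2}g}_{\pi_Q},
\]
and it remains to recognize the two factors. The second is immediate: $\norm{S^{1/2}g}_{\pi_Q}^2=\langle Sg,g\rangle_{\pi_Q}=\langle\Lambda g,\Lambda g\rangle_\pi=\norm{f}_\pi^2$ since $\Lambda g=f$. For the first, $\norm{S^{1/2}D^s g}_{\pi_Q}^2=\langle SD^s g,D^s g\rangle_{\pi_Q}$, and by the second of the two identities above this is precisely $\int|\lambda|^{2s}\,d\langle H_\lambda^* f,f\rangle_\pi=\norm{f}_{\mathcal{D}^s(A)}^2$, by the very definition of $A$ and of the seminorm $\norm{\cdot}_{\mathcal{D}^s(A)}$. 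This would complete the argument.

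The one genuinely delicate point is the last identification, namely that the non-self-adjoint spectral integral $\int|\lambda|^{2s}\,d\langle H_\lambda^* f,f\rangle_\pi$ equals $\langle SD^s g,D^s g\rangle_{\pi_Q}$. Because $F$ is not self-adjoint, the set function $B\mapsto\langle H_B^* f,f\rangle_\pi$ fails to be additive ($E_B$ and $S$ do not commute), so this integral has to be interpreted in the sense of resolutions of the identity of Dunford \cite{Dunford54}, \cite{DS71}, and the identity required is exactly the one established in \cite{IT14} (first half of the proof of Lemma~3.19 there) --- this is the external input on which the argument rests. Granting it, everything above is Cauchy--Schwarz combined with the intertwining $F_B=\Lambda E_B\Lambda^{-1}$ and the elementary computation $\langle\widehat{\Lambda}\Lambda g,g\rangle_{\pi_Q}=\norm{f}_\pi^2$.
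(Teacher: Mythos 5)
Your transport to the self-adjoint picture is correct as far as it goes: the identities $\langle F_Bf,f\rangle_\pi=\langle E_Bg,Sg\rangle_{\pi_Q}$ and $\langle H_B^*f,f\rangle_\pi=\norm{F_Bf}_\pi^2=\langle SE_Bg,E_Bg\rangle_{\pi_Q}$, the evaluation $\norm{S^{1/2}g}_{\pi_Q}=\norm{f}_\pi$, and the Cauchy--Schwarz step are all fine. The genuine gap is exactly the point you flag and then discharge to \cite{IT14}: the identification $\langle SD^sg,D^sg\rangle_{\pi_Q}=\int_{\sigma(L)}|\lambda|^{2s}\,d\langle H^*_\lambda f,f\rangle_\pi$ is not established there, and it is false in general. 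The first half of the proof of Lemma $3.19$ in \cite{IT14} is precisely the Cauchy-sum estimate that constitutes the paper's entire proof of \eqref{eq:dsa}; it does not contain your identity. Since $B\mapsto\langle H^*_Bf,f\rangle_\pi$ is not additive, the integral against it is understood (both in \cite{IT14} and in the paper's proof) as the limit of the diagonal Riemann sums $\sum_k|\lambda_k|^{2s}\langle H^*_{\Delta_k}f,f\rangle_\pi=\sum_k|\lambda_k|^{2s}\langle SE_{\Delta_k}g,E_{\Delta_k}g\rangle_{\pi_Q}$, whereas $\langle SD^sg,D^sg\rangle_{\pi_Q}$ is the limit of the full double sums $\sum_{j,k}\lambda_j^s\lambda_k^s\langle SE_{\Delta_j}g,E_{\Delta_k}g\rangle_{\pi_Q}$; the off-diagonal terms do not vanish because $S=\widehat{\Lambda}\Lambda$ does not commute with the spectral projections of $G$. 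Already in the atomic case with two distinct eigenvalues the two quantities differ by $2\lambda_1^s\lambda_2^s\,\mathrm{Re}\,\langle SE_{\{\lambda_1\}}g,E_{\{\lambda_2\}}g\rangle_{\pi_Q}$, and this discrepancy has no fixed sign, so your bound with $\langle SD^sg,D^sg\rangle_{\pi_Q}^{1/2}$ in place of $\bigl(\int|\lambda|^{2s}\,d\langle H^*_\lambda f,f\rangle_\pi\bigr)^{1/2}$ does not imply \eqref{eq:dsa}.

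The paper avoids this issue by never leaving the level of partial sums: for a partition $(\Delta_k)$ of a bounded interval it uses the idempotency $F_{\Delta_k}=F_{\Delta_k}^2$ to write $\langle F_{\Delta_k}f,f\rangle_\pi=\langle F_{\Delta_k}f,F^*_{\Delta_k}f\rangle_\pi$, applies Cauchy--Schwarz first in the inner product and then in $k$, and bounds $\sum_k\norm{F^*_{\Delta_k}f}^2_\pi$ by $\norm{f}^2_\pi$; only diagonal terms ever appear, so no identification with an operator square such as $\langle SD^sg,D^sg\rangle_{\pi_Q}$ is needed. To salvage your route you would have to run the Cauchy--Schwarz argument for the family of positive forms $(u,v)\mapsto\langle SE_{\Delta_k}u,E_{\Delta_k}v\rangle_{\pi_Q}$ summed over $k$, which is in substance the paper's computation rewritten in the $(g,S)$ variables. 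A further minor point: the assertion that $f\in\mathcal{D}^s(A)$ forces $g=\Lambda^{-1}f\in\mathrm{Dom}(D^s)$ is not immediate when $L$ is unbounded (it would require a Heinz-type domain comparison); the paper sidesteps such domain issues by proving the inequality on every bounded interval $[\alpha,\beta]$ and then passing to the limit.
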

	\begin{proof}
		For sake of completeness, we repeat the arguments of \cite[first half of the proof of Lemma $3.19$ on page 1542]{IT14}. Let $[\alpha,\beta]$ be a bounded interval and $(\Delta_k)_{k=1}^n$ be a family of disjoint intervals whose union is $[\alpha,\beta]$. For every $k$, we choose $\lambda_k \in \Delta_k$. Fix $f \in {\rm{L}}^2(\pi)$. For the Cauchy sums defining the integrals using triangle inequality and Cauchy-Schwartz inequality we have
		\begin{align*}
		\left|\sum_{k=1}^{n} \lambda_{k}^s \left\langle F_{\Delta_k} f,f\right\rangle_{\pi}\right| & \leq \sum_{k=1}^{n}\left|\lambda_{k}\right|^s |\left\langle F_{\Delta_k} f , F_{\Delta_k}^* f \right\rangle_{\pi} | \\ & \leq\left(\sum_{k=1}^{n} |\lambda_{k}|^{2s} \langle F_{\Delta_k} f, F_{\Delta_k} f \rangle_{\pi} \right)^{1 / 2}\left(\sum_{k=1}^{n} \langle F_{\Delta_k}^* f, F_{\Delta_k}^* f \rangle_{\pi}\right)^{1 / 2} \\ & \leq \left(\sum_{k=1}^{n} |\lambda_{k}|^{2s} \langle H_{\Delta_k}^* f,  f \rangle_{\pi} \right)^{1/2} \|f\|_{\pi}.
		\end{align*}
		The inequality \eqref{eq:dsa} holds on every finite interval and the desired result follows by taking limits.
	\end{proof}

	We now proceed to give the proof of Theorem \ref{thm:esterror}. We first prove \eqref{eq:esterror} and consider
	\begin{align*}
		\norm{\Gamma_T(f) - \hat{\Gamma}_{T,n}(f)}_{{\rm{L}}^2(\mathbb{P})}^2 &= \E \left[ \left(\sum_{k=1}^n \int_{(k-1)\Delta_n}^{k\Delta_n} \left(f(X_r) - f(X_{(k-1)\Delta_n})\right)\,dr \right)^2 \right] \\
		&= \sum_{k,l=1}^n \int_{(k-1)\Delta_n}^{k\Delta_n} \int_{(l-1)\Delta_n}^{l\Delta_n} \E \left[ \left(f(X_r) - f(X_{(k-1)\Delta_n})\right) \left(f(X_h) - f(X_{(l-1)\Delta_n})\right) \right]\,drdh\,,
	\end{align*}
	then we proceed to bound the diagonal ($k = l$) and off-diagonal ($k \neq l$) terms. For the diagonal terms, by stationarity we have for $(k-1)\Delta_n \leq r \leq h \leq k\Delta_n$,
	\[\E \left[ \left(f(X_r) - f(X_{(k-1)\Delta_n})\right) \left(f(X_h) - f(X_{(k-1)\Delta_n})\right) \right] = \langle(P_{h-r} - I)f + (I - P_{h - (k-1)\Delta_n})f + (I - P_{r - (k-1)\Delta_n})f,f\rangle_{\pi}, \]
	so by symmetry in $r$ and $h$ we have
	\begin{align*}
		&\quad \sum_{k=1}^n \int_{(k-1)\Delta_n}^{k\Delta_n} \int_{(k-1)\Delta_n}^{k\Delta_n} \E \left[ \left(f(X_r) - f(X_{(k-1)\Delta_n})\right) \left(f(X_h) - f(X_{(k-1)\Delta_n})\right) \right]drdh \\
		&= 2 n \bigg\langle \left( \int_0^{\Delta_n} \int_0^h (P_{h-r} - I)\,drdh + \Delta_n \int_0^{\Delta_n} (I - P_h) \, dh \right)f,f\bigg\rangle_{\pi} \\
		&= \langle \Phi(L)f,f \rangle_{\pi} \\
		&= \int_{\sigma(L)} \Phi(\lambda) \,d\langle F_{\lambda} f,f \rangle_{\pi},
	\end{align*}
	where the last equality follows from the functional calculus of $L$ in Theorem \ref{thm:mcins} and for $\lambda \in \sigma(L)$,
	\[ \Phi(\lambda) = 2n \left(\int_0^{\Delta_n} \int_0^h (e^{\lambda(h-r)} - 1)\,drdh + \Delta_n \int_0^{\Delta_n} (1 - e^{\lambda h}) \, dh\right).\]
	From \cite[Page $15$]{AC16}, we know that $|\Phi(\lambda)| \leq 4 n \Delta_n^{2+s} |\lambda|^s$ with fixed $0 \leq s \leq 1$. Now, we apply Lemma \ref{lem:dsa} to arrive at
	\[\left|\int_{\sigma(L)} \Phi(\lambda) \,d\langle F_{\lambda} f,f \rangle_{\pi}\right| \leq 4T\Delta_n^{1+s} \norm{f}_{\pi} \left(\int_{\sigma(L)} |\lambda|^{2s} \, d\langle H_{\lambda}^* f, f \rangle_{\pi}\right)^{1/2} = 4T\Delta_n^{1+s} \norm{f}_{\pi} \norm{f}_{\mathcal{D}^s(A)}.  \]
	Next, we bound the off-diagonal terms, in which
	 \begin{align*}
	 	&\quad 2\sum_{k > l}^n \int_{(k-1)\Delta_n}^{k\Delta_n} \int_{(l-1)\Delta_n}^{l\Delta_n} \E \left[ \left(f(X_r) - f(X_{(k-1)\Delta_n})\right) \left(f(X_h) - f(X_{(l-1)\Delta_n})\right) \right]drdh \\
	 	&= 2 \bigg\langle \left( \int_0^{\Delta_n} \int_0^{\Delta_n} \left(\sum_{k > l = 1}^n P_{(k-l)\Delta_n - r}\right)(P_{h} - I)(I - P_r)\,drdh \right)f,f\bigg\rangle_{\pi} \\
	 	&= \langle \tilde{\Phi}(L)f,f \rangle_{\pi} \\
	 	&= \int_{\sigma(L)} \tilde{\Phi}(\lambda) \,d\langle F_{\lambda} f,f \rangle_{\pi},
	 \end{align*}
	where the last equality follows again from the functional calculus of $L$ in Theorem \ref{thm:mcins} and for $\lambda \in \sigma(L)$,
	\[ \tilde{\Phi}(\lambda) = 2 \left(\int_0^{\Delta_n} \int_0^{\Delta_n} \left(\sum_{k > l =1}^n e^{\lambda((k-l)\Delta_n - r)}\right)(e^{\lambda h} - 1)(1 - e^{\lambda r})\,drdh \right).\]
	Using \cite[$(16)$]{AC16} there exists a universal constant $\tilde{C} < \infty$ such that
	$|\tilde{\Phi}(\lambda)| \leq \tilde{C} T \Delta_n^{1+s} |\lambda|^s$, and together with Lemma \ref{lem:dsa} yield
	\[\left|\int_{\sigma(L)} \tilde{\Phi}(\lambda) \,d\langle F_{\lambda} f,f \rangle_{\pi}\right| \leq \tilde{C} T\Delta_n^{1+s} \norm{f}_{\pi} \left(\int_{\sigma(L)} |\lambda|^{2s} \, d\langle H_{\lambda}^* f, f \rangle_{\pi}\right)^{1/2} = \tilde{C} T\Delta_n^{1+s} \norm{f}_{\pi} \norm{f}_{\mathcal{D}^s(A)}.  \]
	Next, we prove \eqref{eq:esterror2}. By \eqref{eq:esterror} and triangle inequality,
	\begin{align*}
	\norm{T^{-1}\hat{\Gamma}_{T,n}(f) - \int f \, d\pi}_{{\rm{L}}^2(\mathbb{P})} &\leq T^{-1} \norm{\hat{\Gamma}_{T,n}(f) - \Gamma_{T}(f)}_{{\rm{L}}^2(\mathbb{P})} + \norm{T^{-1} \Gamma_T(f) - \int f \, d\pi}_{{\rm{L}}^2(\mathbb{P})} \\
	&\leq \dfrac{C}{\sqrt{T}} \sqrt{\norm{f}_{\mathcal{D}^s(A)} \norm{f}_{\pi} \Delta_n} + \norm{T^{-1} \Gamma_T(f_0)}_{{\rm{L}}^2(\mathbb{P})}.
	\end{align*}
	We proceed to bound $\norm{T^{-1} \Gamma_T(f_0)}_{{\rm{L}}^2(\mathbb{P})}$, in which
	\begin{align*}
	\norm{T^{-1} \Gamma_T(f_0)}_{{\rm{L}}^2(\mathbb{P})}^2 &= 2 T^{-2} \int_0^T \int_0^h \langle P_{h-r}f_0,f_0 \rangle_{\pi}\,drdh \\
	&= \int_{\sigma(L)} \overline{\Phi}(\lambda) \,d\langle F_{\lambda} f_0,f_0 \rangle_{\pi},
	\end{align*}
	where $\overline{\Phi}$ is defined by, for $\lambda \in \sigma(L)$,
	\begin{align*}
	\overline{\Phi}(\lambda) &= 2 T^{-2} \int_0^T \int_0^h e^{\lambda(h-r)}\,drdh = 2 \dfrac{(\lambda T)^{-1}(e^{\lambda T} - 1) - 1}{\lambda T},
	\end{align*}
	and there exists a constant $\tilde{C}$ such that $|\overline{\Phi}(\lambda)| \leq \dfrac{\tilde{C}}{|\lambda|T}$. Using Lemma \ref{lem:dsa} gives
	\begin{align*}
	\norm{T^{-1} \Gamma_T(f_0)}_{{\rm{L}}^2(\mathbb{P})}^2 &\leq \dfrac{\tilde{C}}{T} \left|\int_{\sigma(L)} |\lambda|^{-1} \,d\langle F_{\lambda} f_0,f_0 \rangle_{\pi}\right| \\
	&\leq \dfrac{\tilde{C}}{T} \left(\int_{\sigma(L)} |\lambda|^{-2} \,d\langle H_{\lambda}^* f_0,f_0 \rangle_{\pi}\right) \norm{f_0}_{\pi} = \dfrac{\tilde{C}}{T} \norm{A^{-1}f_0}_{\pi} \norm{f_0}_{\pi}.
	\end{align*}	
	Finally, it follows from a standard change of measure argument to give \eqref{eq:esterror3}.

\section{Similarity orbit of reversible Markov chains}\label{sec:simorbit}

In this Section, our aim is to provide several illuminating examples for Theorem \ref{thm:mcins} and we will work in the continuous-time setting as this result generalizes easily to this setting, see Remark \ref{rk:cont} and \ref{rk:cont2}.  More precisely, suppose that we start with a reversible generator $G$ with transition semigroup $(Q_t)_{t \geq 0}$, we would like to characterize the family of Markov chains with generator $L$ associated with $G$ under the similarity transformation $G \Lambda = \Lambda L$ with $\Lambda$ being a bounded invertible Markov link. This idea allows us to generate Markov or contraction kernel from known ones in which the spectral decomposition, stationary distribution and eigenfunctions are linked by $\Lambda$. In addition, the so-called eigentime identity is preserved under intertwining as the spectrum is invariant under such transformation as stated in Theorem \ref{thm:mcins}. We will illustrate this approach by studying the  pure birth link in particular. While we consider univariate examples in the subsequent Section, nonetheless we can still handle the orbits of multivariate reversible Markov chains (e.g. \cite{KZ09,KM13,Griff16} and \cite{Zhou08}) by considering the link kernel to be the tensor product from univariate link and analyze the corresponding tensorized orbits.

Before detailing the examples, we introduce the following  notation that will be used throughout. Let $G$ be a reversible birth-death generator with respective to $\pi_G$ on $\mathcal{X} = \llbracket 0,\r \rrbracket$. Let
\begin{equation}
G(x,x-1) = \dx_x, \quad G(x,x) =-( \dx_x+ \bx_x) \quad \textrm{ and } \quad G(x,x+1) = \bx_x,
 \end{equation}
where $\dx_x$ (resp.~$\bx_x$) is the death (resp.~birth) rate at state $x$, and eigenvalues-eigenvectors denoted by $(-\lambda_j,\phi_j)_{j=0}^{N}$, where $\phi_j$ are orthonormal in $l^2(\pi_G)$.  We assume that $\dx_0 = \bx_{\r} = 0$. Write $(Q_t)_{t \geq 0}$ for  its  transition semigroup, then the spectral decomposition of $Q_t$ is given by
 \begin{equation}
 Q_t(x,y) = \sum_{j=0}^{\r} e^{-\lambda_j t}\phi_j(x) \phi_j(y) \pi_G(y).
 \end{equation}
 For further details on various birth-death models and their connections with orthogonal polynomials, we refer interested readers to \cite{KM59,Schoutens00,DKSC08,Sasaki09,KS96,Zhou08} and the references therein.

\subsection{Pure birth link on finite state space}

In this Section, we specialize into the case of $\mathcal{X} = \llbracket 0,\r \rrbracket$, with the link being the pure birth link as introduced by \cite{Fill} to study the distribution of hitting time and fastest strong stationary time, generated by birth-death processes with birth and death rates to be $b_x$ and $d_x$ respectively. The particular pure birth link $\Lambda_{pb}$ that we study is of the form
\begin{eqnarray}
  \Lambda_{pb}(x,y) &=& 1/2, x \in \llbracket 0,\r-1 \rrbracket, y \in \{x,x+1\}, \Lambda_{pb}(\r,\r) = 1, \\
  \Lambda_{pb}(x,y) &=& 0 \textrm{ otherwise}.
\end{eqnarray}
  A special feature in the pure birth orbit is that the heat kernel $P_t := e^{tL}$ of $L$ need not be Markovian, yet it still converges to $\pi_L$ exponentially fast as illustrated in Proposition \ref{prop:pb} below. Yet, we give sufficient conditions on a birth-death generator $G$ to guarantee $L$ to be Markov generator.

\begin{proposition}\label{prop:pb}
	 Suppose that $G \stackrel{\Lambda_{pb}}{\sim} L$  and denote by $(P_t)_{t \geq 0}$ being the transition semigroup associated with   $L$. Note that $(P_t)_{t \geq 0}$ need not be Markov under $\Lambda_{pb}$. For any $t \geq 0$ and $j,x,y \in \llbracket 0,\r \rrbracket$,  $P_t$ admits the following spectral decomposition
\begin{eqnarray*}
		 P_t(x,y) &=& \sum_{j=0}^{\r} e^{-\lambda_j t} f_j(x)f_j^*(y)\pi_L(y) ,
\end{eqnarray*}
where $f_j^*(y)\pi_L(y) = \frac{\phi_j(y-1)  \pi_G(y-1)}{2}\1_{y-1 \geq 0} + \phi_j(y) \pi_G(y)\left(\dfrac{\1_{y \neq \r}}{2} + \1_{y = \r}\right)$, $f_j(x) = \sum_{k=x}^{\r-1} (-2)^{k-x}\phi_j(k)+\phi_j(\r)$, and,
\begin{eqnarray} \label{eq:bd_bd}
	 ||P_t - \pi_L||_{TV} &\leq & \dfrac{\kappa(\Lambda_{pb}) e^{-\lambda_1t}}{2} \sqrt{\dfrac{1-\pi_{L}^*}{\pi_{L}^*}},
\end{eqnarray}
where $\pi_L(y) = \pi_G(y-1) \left(\dfrac{\1_{y-1 \geq 0}}{2} \right) +  \pi_G(y)\left(\dfrac{\1_{y \neq \r}}{2} + \1_{y = \r}\right)$ and recall that $ 	 \pi_L^* = \min_{y \in \llbracket 0,\r \rrbracket} \pi_L(y)$.

Moreover,  note that for all $x \in \llbracket 0,\r \rrbracket$,
$$L(x,x) = - \bx_x - \dx_{\max{\{x+1,\r\}}} \left(\dfrac{\1_{x+1 \geq \r}}{2} + \1_{x+1 < \r}\right)  < 0,$$
for $x > y+1$, $L(x,y) = 0$ and  for $x<\r$
	$$L(x+1,x) = \left(2 \1_{x+1 \neq \r} + \1_{x+1 =\r}\right) \dfrac{\dx_{x+1}}{2}  > 0.$$
If $\r\geq 4$, for $y \in \llbracket 1,\r-1 \rrbracket$,
$$L(y-1,y)= -\dx_{y-1} + \bx_y + \dx_{y+1}\1_{y < \r-1} +  \frac{\dx_{y+1}}{2}\1_{y+1=\r} \geq 0 \textrm{ and } L(\r-1,\r)=-\dx_{\r-1} + \bx_{\r-1} + \dx_{\r} \geq 0,$$
for $x \in \llbracket 0, y-2 \rrbracket$ and $y \in \llbracket 2,\r-1 \rrbracket$,
$$L(x,y)=(-1)^{x+y} \left( \bx_{y-2}+\dx_{y-1}-\bx_y-\dx_{y+1}+\frac{\dx_{\r}}{2}\1_{y=\r-1}\right) \geq 0,$$
and for $x \in \llbracket 0, \r-2 \rrbracket$
$$L(x,\r)=(-1)^{x+\r} \left(\bx_{\r-2} + \dx_{\r-1}-\bx_{\r-1}-\frac{\dx_{\r}}{2}\right) \geq 0,$$
then $L$ is a Markov generator.
\end{proposition}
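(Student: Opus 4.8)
The plan is to exploit the defining intertwining $G\Lambda_{pb}=\Lambda_{pb}L$, i.e.\ $L=\Lambda_{pb}^{-1}G\Lambda_{pb}$, and transport the known spectral data of the reversible birth-death generator $G$ through the explicit link $\Lambda_{pb}$. First I would compute $\Lambda_{pb}^{-1}$ explicitly: since $\Lambda_{pb}$ is upper bidiagonal with $1/2$ on the diagonal and first superdiagonal (and a $1$ in the corner), its inverse is the upper-triangular matrix $\Lambda_{pb}^{-1}(x,y)=(-1)^{y-x}2^{y-x+1}$ for $x\le y\le \r-1$, with the last column adjusted by the corner entry; equivalently $\Lambda_{pb}^{-1}(x,y)=2(-2)^{y-x}$ off the last column and $\Lambda_{pb}^{-1}(x,\r)=(-2)^{\r-x}$. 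From $f_j=\Lambda_{pb}^{-1}\phi_j$ (the images of the orthonormal eigenvectors $\phi_j$ of $G$, which by Theorem \ref{thm:mcins}\ref{it:resolution} form a Riesz basis of eigenfunctions of $L$) one reads off $f_j(x)=\sum_{k=x}^{\r-1}(-2)^{k-x}\phi_j(k)+\phi_j(\r)$, while the biorthogonal family is $f_j^*=(\Lambda_{pb}^*)^{-1}\phi_j$ with respect to $\pi_L$; a direct transpose computation using $\pi_L(y)\widehat{\Lambda}_{pb}(y,x)=\pi_G(x)\Lambda_{pb}(x,y)$ yields the stated formula for $f_j^*(y)\pi_L(y)$, and since $\pi_L=\pi_G\Lambda_{pb}$ is a reference measure for $L$ one gets the displayed $\pi_L(y)$. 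The spectral decomposition $P_t(x,y)=\sum_j e^{-\lambda_j t}f_j(x)f_j^*(y)\pi_L(y)$ is then exactly the compact-case formula of Theorem \ref{thm:mcins}\ref{it:resolution} (in its continuous-time form, Remark \ref{rk:cont2}), and the convergence bound \eqref{eq:bd_bd} is the specialization of Corollary \ref{cor:spectralexp}\eqref{it:totalvar} with $\lambda_*=e^{-\lambda_1 t}$ and $\pi_{min}=\pi_L^*$, noting $\kappa(\Lambda_{pb})=\kappa(\widehat\Lambda_{pb})$.

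For the entrywise formulas for $L$, the plan is to compute $L=\Lambda_{pb}^{-1}G\Lambda_{pb}$ directly, column by column. Using that $G$ is tridiagonal, the product $G\Lambda_{pb}$ has a band structure, and left-multiplying by the upper-triangular $\Lambda_{pb}^{-1}$ produces, at entry $(x,y)$, an alternating sum $(-1)^{x+y}$ times a telescoping combination of the birth/death rates $\bx,\dx$ near $y$; the geometric weights $2^{k-x}$ collapse because $G\Lambda_{pb}$ already vanishes two steps below the diagonal, leaving only boundary contributions. This gives $L(x,y)=0$ for $x>y+1$, the subdiagonal entry $L(x+1,x)$, the diagonal $L(x,x)$, and the strictly-upper entries, each with the stated sign pattern. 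I would carry out this computation once for a generic interior column $1\le y\le \r-2$, then separately handle the boundary columns $y=\r-1$ and $y=\r$ (where the corner entry of $\Lambda_{pb}$ and $\Lambda_{pb}^{-1}$ and the conditions $\dx_0=\bx_\r=0$ enter), and the column $y=0$.

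Finally, for the Markov-generator claim, I would check the two defining properties: all off-diagonal entries nonnegative and rows summing to zero. Row sums vanish automatically since $L\mathbf 1=\Lambda_{pb}^{-1}G\Lambda_{pb}\mathbf 1=\Lambda_{pb}^{-1}G\mathbf 1=0$ (as $\Lambda_{pb}\mathbf 1=\mathbf 1$ and $G\mathbf 1=0$). Nonnegativity of the subdiagonal entries is immediate from $\dx_{x+1}\ge 0$; the content is that the strictly-upper entries are $\ge 0$, which is precisely the list of inequalities displayed in the statement — e.g.\ $-\dx_{y-1}+\bx_y+\dx_{y+1}\ge 0$ on the first superdiagonal and the alternating-sign telescoped quantities $(-1)^{x+y}(\bx_{y-2}+\dx_{y-1}-\bx_y-\dx_{y+1}+\cdots)\ge 0$ farther up — so these are taken as the hypotheses ``then $L$ is a Markov generator.'' The main obstacle I anticipate is purely bookkeeping: getting the alternating signs, the $2^{k-x}$ weights, the corner corrections from $\Lambda_{pb}(\r,\r)=1$, and the $\1_{y=\r-1}$, $\1_{y+1=\r}$ edge terms all consistent, and verifying the telescoping that reduces an a priori long sum $\sum_{k}(-2)^{k-x}(G\Lambda_{pb})(k,y)$ to the short boundary expressions shown; the $\r\ge 4$ hypothesis is exactly what guarantees enough separation between the boundary columns for the generic interior formula to be non-vacuous.
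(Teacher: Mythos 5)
Your plan follows exactly the paper's route (compute $\Lambda_{pb}^{-1}$, expand $P_t=\Lambda_{pb}^{-1}Q_t\Lambda_{pb}$ along the spectral expansion of $Q_t$, get \eqref{eq:bd_bd} from Corollary \ref{cor:spectralexp}, then compute $G\Lambda_{pb}$ and read off the entries of $L=\Lambda_{pb}^{-1}(G\Lambda_{pb})$ to impose the nonnegativity conditions), but the computation you start from is wrong: your explicit inverse is not the inverse of $\Lambda_{pb}$. Away from the corner $\Lambda_{pb}=\tfrac12(I+S)$ with $S$ the upward shift, so $\Lambda_{pb}^{-1}=2\sum_{k\ge0}(-1)^kS^k$ up to a last-column correction; the correct formula is $\Lambda_{pb}^{-1}(x,y)=(-1)^{y-x}\bigl(2\,\1_{y\neq\r}+\1_{y=\r}\bigr)$ for $x\le y$ and $0$ otherwise — the entries have constant magnitude $2$ (and $1$ in the last column), they do \emph{not} grow geometrically. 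With your formula $\Lambda_{pb}^{-1}(x,y)=2(-2)^{y-x}$ one gets, for any $x\le\r-2$ and $y=x+1\le\r-1$, $\bigl(\Lambda_{pb}\Lambda_{pb}^{-1}\bigr)(x,x+1)=\tfrac12\cdot2(-2)+\tfrac12\cdot2=-1\neq0$, so the claimed inverse fails already on the first superdiagonal.

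This is not a cosmetic slip, because every subsequent step of your plan evaluates sums $\sum_k\Lambda_{pb}^{-1}(x,k)(G\Lambda_{pb})(k,y)$ (the sum is short simply because $(G\Lambda_{pb})(k,y)=0$ unless $y-2\le k\le y+1$, not by telescoping). With weights of magnitude $2^{k-x+1}$ the entries $L(x,y)$ for $x\le y-2$ would carry a factor $2^{y-x}$ depending on $x$, which is incompatible with the identities you are trying to prove, e.g. $L(x,y)=(-1)^{x+y}\bigl(\bx_{y-2}+\dx_{y-1}-\bx_y-\dx_{y+1}+\tfrac{\dx_{\r}}{2}\1_{y=\r-1}\bigr)$, whose magnitude is independent of $x$; similarly the eigenfunctions come out as $f_j(x)=\sum_{k=x}^{\r}(-1)^{k-x}(2\,\1_{k\neq\r}+\1_{k=\r})\phi_j(k)$, which is what the paper's proof derives and what its final example uses (the $(-2)^{k-x}$ displayed in the proposition is a typo in the statement, and your inverse appears to have been reverse-engineered from it — note it does not even reproduce that formula, since it would give coefficients $2(-2)^{k-x}$ and $(-2)^{\r-x}$). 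The remaining ingredients of your proposal — $\pi_L=\pi_G\Lambda_{pb}$, identifying $f_j^*\pi_L$ from the right factor, the TV bound via Corollary \ref{cor:spectralexp}, the row-sum argument $L\mathbf 1=\Lambda_{pb}^{-1}G\Lambda_{pb}\mathbf 1=0$, and the boundary-case bookkeeping for $y\in\{\r-1,\r\}$ — are sound and coincide with the paper once the inverse is corrected.
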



\begin{rk}
	We can see that $\pi_L$ is the distribution at time $1$ of the Markov chain with transition matrix $\Lambda_{pb}$ under the initial law $\pi_G$.
\end{rk}

\begin{proof}
	We first observe that the inverse of $\Lambda_{pb}$ is given by
	\begin{eqnarray}\label{eq:inv_bd}
	\Lambda_{pb}^{-1}(x,y) &=& (-1)^{y-x}(2 \1_{y \neq \r} + \1_{y = \r})  \textrm{ for } x \leq y, x,y \in \llbracket 0,\r \rrbracket, \\
	\Lambda_{pb}^{-1}(x,y) &=& 0 \textrm{  otherwise}.
	\end{eqnarray}
	Upon expanding $P_t = \Lambda_{pb}^{-1} Q_t \Lambda_{pb}$, we get
	\begin{align*}
		P_t(x,y) &= \sum_{k=x}^\r (-1)^{k-x} (2 \1_{k \neq \r} + \1_{k = \r}) \left(Q_t(k,y-1) \left(\dfrac{\1_{y-1 \geq 0}}{2}\right) + Q_t(k,y) \left(\dfrac{\1_{y \neq \r}}{2} + \1_{y = \r}\right)\right) \\
		&= \sum_{j=0}^{\r} e^{-\lambda_j t} \left(\sum_{k=x}^{\r} (-1)^{k-x} (2 \1_{k \neq \r} + \1_{k = \r}) \phi_j(k)\right) \\
		&\quad \times \left(\phi_j(y-1)  \pi_G(y-1) \left(\dfrac{\1_{y-1 \geq 0}}{2} \right) + \phi_j(y) \pi_G(y)\left(\dfrac{\1_{y \neq \r}}{2} + \1_{y = \r}\right)\right),
	\end{align*}
	where the second equality follows from substituting the spectral expansion of $(Q_t)_{t \geq 0}$. The bound \eqref{eq:bd_bd}  follows directly from Corollary \ref{cor:spectralexp}. To show that $L$ is a Markov generator under the proposed conditions on birth and death rates, we need to impose sufficient conditions such that $L(x,x) < 0$ for all $x \in \mathcal{X}$ and $L(x,y) \geq 0$ for all $x \neq y \in \mathcal{X}$, see e.g. \cite[Chapter $20$]{LPW09}.  We proceed by calculating $G\Lambda_{pb}$, and the entries not mentioned below are all zero. We have
	\begin{eqnarray*}
	2G \Lambda_{pb}(x,x-1) &=&  \dx_x, \quad x \in \llbracket 1,\r \rrbracket, \\ 
	2G \Lambda_{pb}(x,x) &=&   -\bx_x -\dx_{\r}\1_{x=\r},\quad  x \in \llbracket 0,\r \rrbracket, \\
	2G \Lambda_{pb}(x,x+1) &=& -\dx_x +\bx_{\r-1}\1_{x=\r-1} ,  \quad  x \in \llbracket 0,\r-1 \rrbracket, \\
	2G \Lambda_{pb}(x,x+2) &=& \bx_x,  \quad  x \in \llbracket 0,\r-2 \rrbracket.
	\end{eqnarray*}

	Using the form of $G \Lambda_{pb}$ described above, we first note that $L(x,x) < 0$ is automatically satisfied since
	$$L(x,x) = - \bx_x - \dx_{\max{\{x+1,\r\}}} \left(\dfrac{1}{2}\1_{x+1 \geq \r} + \1_{x+1 < \r}\right)  < 0.$$ It remains to check $L(x,y) \geq 0$ for all $x \neq y$. Indeed, we have
	$$L(x,y) = \sum_{k=\max{\{x,y-2\}}}^{\min{\{y+1,\r\}}} (-1)^{k-x} (2 \1_{k \neq \r} + \1_{k = \r}) G\Lambda_{pb}(k,y).$$
	For $x > y+1$, $L(x,y) = 0$. For $x = y+1$,
	$$L(y+1,y) = \left(2 \1_{y+1 \neq \r} + \1_{y+1 =\r}\right) \dfrac{1}{2} \dx_{y+1} > 0.$$
	Thus, it boils down to check $L(x,y) \geq 0$ for $x \in \llbracket 0,y-1 \rrbracket$. For $y \in \llbracket 1,\r-1 \rrbracket$ and $x = y-1$,
	$$L(x,y) = -\dx_{y-1} + \bx_y + \dx_{y+1}\1_{y < \r-1} +  \frac{\dx_{y+1}}{2}\1_{y+1=\r}.$$
For $y = \r$ and $x = \r - 1$,
$$L(\r-1,\r) = 2G\Lambda_{pb}(\r-1,\r) - G \Lambda_{pb}(\r,\r) = -\dx_{\r-1} + \bx_{\r-1} + \dx_{\r}.$$
For $y \in \llbracket 2,\r-1 \rrbracket$ and $x \in \llbracket 0, y-2 \rrbracket$, since $\r\geq 4$,
	\begin{eqnarray*}
	 \nonumber 
	   L(x,y)&=& (-1)^{x+y} \left( 2G\Lambda_{pb}(y-2,y)-2G\Lambda_{pb}(y-1,y)+2G\Lambda_{pb}(y,y)-2G\Lambda_{pb}(y+1,y)\right) \\
	   &=& (-1)^{x+y} \left( \bx_{y-2}+\dx_{y-1}-\bx_y-\dx_{y+1}+\frac{\dx_{\r}}{2}\1_{y=\r-1}\right),
	\end{eqnarray*}
and for $y = \r$ and $x \in \llbracket 0, y-2 \rrbracket$,

\begin{eqnarray*}
	   L(x,\r)&=& (-1)^{x+\r} \left(2G\Lambda_{pb}(\r-2,\r)-2G\Lambda_{pb}(\r-1,\r)+G\Lambda_{pb}(\r,\r)\right) \\
	   &=& (-1)^{x+\r} \left(\bx_{\r-2} + \dx_{\r-1}-\bx_{\r-1}-\frac{\dx_{\r}}{2}\right).
	\end{eqnarray*}

\end{proof}


\begin{example}
	The pair
		$$G = \begin{pmatrix}
		-1 & 1 & 0 & 0 & 0 \\
		0.5 & -1 & 0.5 & 0 & 0 \\
		0 & 0.5 & -1 & 0.5 & 0 \\
		0 & 0 & 0.5 & -1 & 0.5 \\
		0 & 0 & 0 & 1 & -1
		\end{pmatrix}, \quad L = \begin{pmatrix}
		-1.5 & 1 & 0.5 & 0 & 0 \\
		0.5 & -1 & 0.5 & 0 & 0 \\
		0 & 0.5 & -1 & 0.5 & 0 \\
		0 & 0 & 0.5 & -1 & 0.5 \\
		0 & 0 & 0 & 0.5 & -0.5
		\end{pmatrix}$$
	satisfies the assumption Proposition \ref{prop:pb}, where $L$ is a non-reversible Markov generator since $\pi_L = (0.0625, 0.1875, 0.25,0.25,0.25)$ and $\pi_L(0) L(0,1) \neq \pi_L(1) L(1,0)$.
\end{example}
\begin{example}[Pure birth variants of constant rate birth-death processes with reflection at $0$ and $\r$]
	A more general example is that $\bx_x = \dx_x = \lambda$ for $x \in \llbracket 1,\r-1 \rrbracket$ and $\bx_0 =\dx_{\r} = 2 \lambda$ for some $\lambda > 0$. The stationary distribution $\pi_G$ is $\pi_G(x) = \frac{1}{\r}$ for $x \in \llbracket 1, \r-1 \rrbracket$ and $\pi_G(0) = \pi_G(\r) = \frac{1}{2\r}$, and the associated eigenvalues and orthogonal polynomials are, for $j,x \in \llbracket 0,\r \rrbracket$,
	\begin{align*}
		\lambda_j &= 2\lambda \left(1-\cos(\theta_j)\right) \\
		\phi_j(x) &= \cos(\theta_j x + c),
	\end{align*}
	where $(\theta_j)_{j=0}^{\r}$ and $c$ are determined by the boundary values $\cos(\theta x + c) = \cos(\theta) \cos(c)$ and $\cos(\theta(N-1) + c) = \cos(\theta) \cos(\theta N + c)$ and are arranged such that $(\cos(\theta_j))_{j=0}^{\r}$ is in non-increasing order, see \cite[Proposition $22$]{DM15}
	and \cite{Zhou08}. We proceed to check that the conditions in Proposition \ref{prop:pb} are fulfilled:
  for $y \in \llbracket 1,\r-1 \rrbracket$,
	$$L(y-1,y)=-\dx_{y-1} + \bx_y + \dx_{y+1}\1_{y < \r-1} + \frac{\dx_{y+1}}{2}\1_{y+1=\r} =  \lambda \1_{y < \r-1} + \frac{\lambda}{2}\1_{y+1=\r} \geq 0,$$
	and
	$$L(\r-1,\r)=-\dx_{\r-1} + \bx_{\r-1} + \dx_{\r} = 2\lambda \geq 0.$$
	For $x = 0$ and $y = 2$,
	\begin{equation*}
	  L(0,2) = \lambda,
	\end{equation*}
and otherwise for $y \in \llbracket 3,\r-1 \rrbracket$ and $x \in \llbracket 0, y-2 \rrbracket$,
	$$(-1)^{x+y} \left( \bx_{y-2}+\dx_{y-1}-\bx_y-\dx_{y+1}+\frac{\dx_{\r}}{2}\1_{y=\r-1}\right) = 0 \geq 0,$$
	and for $y = \r$ and $x \in \llbracket 0, y-2 \rrbracket$,
	$$(-1)^{x+\r} \left(\bx_{\r-2} + \dx_{\r-1}-\bx_{\r-1}-\frac{\dx_{\r}}{2}\right) = 0 \geq 0,$$
so $L$ is a Markov generator, with spectral decomposition given by
\begin{align*}
P_t(x,y) &= \sum_{j=0}^{\r} e^{-2\lambda \left(1-\cos(\theta_j)\right) t} f_j(x)f_j^*(y)\pi_L(y), \\
||P_t - \pi_L||_{TV} &\leq \dfrac{\kappa(\Lambda_{pb}) e^{-2\lambda \left(1-\cos(\theta_1)\right)t}}{2} \sqrt{\dfrac{1-\pi_{L}^*}{\pi_{L}^*}} = O(e^{-2\lambda \left(1-\cos(\theta_1)\right)t}), \quad \mathrm{where} \\
\pi_L^* &= \dfrac{1}{4\r}, \\
f_j(x) &= \sum_{k=x}^{\r} (-1)^{k-x} (2 \1_{k \neq \r} + \1_{k = \r}) \cos(\theta_j k + c) , \\
f_j^*(y)\pi_L(y) &= \cos(\theta_j (y-1) + c)  \pi_G(y-1) \left(\dfrac{\1_{y-1 \geq 0}}{2} \right) \\
&\quad+ \cos(\theta_j y + c) \pi_G(y)\left(\dfrac{\1_{y \neq \r}}{2} + \1_{y = \r}\right).
\end{align*}
\end{example}

\bibliographystyle{abbrvnat}

\end{document}